%
%
%
%
%
%

\documentclass[11pt,a4paper]{article}
\setlength{\textwidth}{16cm}
\setlength{\oddsidemargin}{0pt}
\setlength{\textheight}{24cm}
\setlength{\topmargin}{0cm}
\setlength{\headheight}{0pt}
\setlength{\headsep}{0pt}
\setlength{\footskip}{24pt}
\setlength{\topsep}{12pt}

\usepackage[dvipsnames]{xcolor}
\usepackage[all,2cell,cmtip]{xy}
\UseTwocells
\entrymodifiers={+!!<0pt,\fontdimen22\textfont2>}

\usepackage{fouriernc}  
\usepackage[english]{babel}  

\usepackage{verbatim} 

\usepackage{amsmath}
\usepackage{amsfonts}
\usepackage{amssymb}
\usepackage{mathrsfs}    
\usepackage{amsthm} 
\swapnumbers  

\usepackage[style=alphabetic,backend=biber]{biblatex}
\usepackage{csquotes}
\addbibresource{bicat}
\usepackage[colorlinks=true,linkcolor=MidnightBlue,citecolor=OliveGreen]{hyperref}
\usepackage{bookmark}

\theoremstyle{plain}
\newtheorem*{thm0}{Theorem}
\newtheorem*{cor0}{Corollary}

\newtheorem{thm}{Theorem}[subsection]
\newtheorem{prop}[thm]{Proposition}
\newtheorem{cor}[thm]{Corollary}

\newtheorem{lemma}[thm]{Lemma}
\newtheorem*{lemma0}{Lemma}

\newtheorem{hyp}[thm]{Hypothesis}
\theoremstyle{definition}
\newtheorem*{def0}{Definition}

\newtheorem{para}[thm]{}
\newtheorem{defn}[thm]{Definition}

\theoremstyle{remark}
\newtheorem{remark}[thm]{Aside}
\newtheorem{eg}[thm]{Example}
\newtheorem*{eg0}{Example}

\newcommand{\proofof}[1]{\end{#1}\begin{proof}}

\makeatletter
\renewcommand\section{\@startsection {section}{1}{\z@}%
  {-3.5ex \@plus -1ex \@minus -.2ex}{2.3ex \@plus.2ex}%
  {\normalfont\large\bfseries}}
\renewcommand\subsection{\@startsection{subsection}{2}{\z@}%
  {-3.25ex\@plus -1ex \@minus -.2ex}{1.5ex \@plus .2ex}%
  {\normalfont\bfseries}}

\newcommand{\lie}[1]{\mathfrak{#1}}
\newcommand{\sh}[1]{\mathcal{#1}}
\newcommand{\N}{{\mathbb N}}
\newcommand{\Z}{{\mathbb Z}}

\newcommand{\R}{{\mathbb R}}
\newcommand{\C}{{\mathbb C}}

\newcommand{\iden}{\mathrm{id}}

\DeclareMathOperator*\colim{colim}
\DeclareMathAlphabet{\mathrmsl}{OT1}{cmr}{m}{sl}

\newcommand{\rssymb}[2]{\newcommand{#1}{\mathrmsl{#2}} }
\newcommand{\oper}[3][n]{\newcommand{#2}{\mathop{\mathrm{#3}}%
\ifx n#1\nolimits\else\limits\fi} }
\newcommand{\rsoper}[3][n]{\newcommand{#2}{\mathop{\mathrmsl{#3}}%
\ifx n#1\nolimits\else\limits\fi} }

\renewcommand{\bf}[1]{\mathbf{#1}}
\renewcommand{\rm}[1]{\mathrm{#1}}

\oper\Ad{Ad}
\oper\ad{ad}

\oper\val{val}
\oper\coker{coker}
\oper\mult{mult}
\oper\Iso{Iso}
\oper\End{End}
\oper\Nat{Nat}
\oper\Aut{Aut}
\oper\Sub{Sub}
\oper\Alt{Alt}
\oper\Ext{Ext}
\oper\Pic {Pic}
\oper\Sym{Sym}
\oper\Spec{Spec}
\oper\Spf{Spf}
\oper\Sp{Sp}
\oper\Spa{Spa}
\oper\Proj{Proj}
\rsoper\divg{div}
\rsoper{\sym}{sym}
\rsoper{\alt}{alt}
\rsoper\trace{tr}
\rssymb\id{id}

\newcommand{\thismonth}{\ifcase\month\or
  January\or February\or March\or April\or May\or June\or
  July\or August\or September\or October\or November\or December\fi
  \space\number\year}

%
%


\usepackage{enumitem}

\newcommand{\PSh}{\mathrm{PSh}}
\newcommand{\Fun}{\mathrm{Fun}}

\newcommand{\Biv}{\mathrm{Biv}}

\newcommand{\CMon}{\mathrm{CMon}}

\newcommand{\Cor}{\mathrm{Corr}}

\newcommand{\Ob}{\mathrm{Ob}}

\newcommand{\Mod}{\mathrm{Mod}}
\newcommand{\Map}{\mathrm{Map}}

\newcommand{\Cat}{\mathbf{Cat}}

\newcommand{\Fin}{\mathbf{Fin}}
\newcommand{\Spc}{\mathbf{Spc}}
\newcommand{\Cart}{\mathrm{Cart}}
\newcommand{\coCart}{\mathrm{coCart}}
\newcommand{\biCart}{\mathrm{biCart}}
\newcommand{\op}{\mathrm{op}}

\newcommand{\Spanext}{\mathrm{Spex}}
\newcommand{\sep}{\!,\,}	
\DeclareMathOperator*{\return}{\mapsto}
\newcommand{\fibre}{\mathrm{fib}}	
\newcommand{\alignhead}[1]{\noalign{\noindent\hspace{20pt}\emph{#1.}}}

\usepackage{calc}
\newcommand{\acong}{\mathrel{\makebox[\widthof{$\Rightarrow$}]{$\cong$}}} 

\newenvironment{labelitems}{
  \begin{list}{}{
    \setlength{\leftmargin}{0pt}
    \setlength{\itemindent}{\labelwidth}  
    \addtolength{\itemindent}{\labelsep}
    
  } 
}{ 
  \end{list} 
}

\title{A bivariant Yoneda lemma and $(\infty,2)$-categories of correspondences\thanks{AMS Math subject classification tags: 
  18G99  ``None of the above, but in this section (derived categories)'';
  18D20  ``Enriched categories'';
  18A40  ``Adjoint functors'';
  55U40  ``Topological categories'';
  55P65  ``Homotopy functors''. \\
  The bulk of this work was supported by World Premier International Research Center Initiative (WPI), MEXT,
Japan, and by JSPS KAKENHI Grant Number JP19K14522.
  Later revisions were supported by JSPS KAKENHI Grant Number JP16H06337.}}
\author{Andrew W. Macpherson}

\begin{document}

\maketitle
\begin{abstract}
A well-known folklore states that if you have a bivariant homology theory satisfying a base change formula, you get a representation of a category of correspondences. For theories in which the covariant and contravariant transfer maps are in mutual adjunction, these data are actually equivalent. In other words, a 2-category of correspondences is the universal way to attach to a given 1-category a set of right adjoints that satisfy a base change formula .

Through a bivariant version of the Yoneda paradigm, I give a definition of correspondences in higher category theory and prove an extension theorem for bivariant functors. Moreover, conditioned on the existence of a 2-dimensional Grothendieck construction, I provide a proof of the aforementioned universal property. The methods, morally speaking, employ the `internal logic' of higher category theory: they make no explicit use of any particular model.
\end{abstract}

\tableofcontents

\section{Introduction}

Let $D$ be a category with fibre products. We can define a category $\Cor_D$ of \emph{correspondences in $D$} as follows:

\begin{def0}
The category $\Cor_D$ of correspondences in $D$ comprises the following data:
\begin{enumerate}[label=(\arabic*), start=0]
\item Objects of $\Cor_D$ are objects of $D$.
\item A map $X\nrightarrow Y$ in $\Cor_D$ is a \emph{span}
  \[\xymatrix{ & K \ar[dl]_p \ar[dr]^q \\ X&&Y }\] 
\item[$(\circ)$] The composite of spans $[X\leftarrow K\rightarrow Y]$ and $[Y\leftarrow K^\prime\rightarrow Z]$ is calculated by the fibre product 
  \[\xymatrix{
    & K\times_YK' \ar[dl] \ar[dr] \\ X&&Z.
  }\]
  of $K$ with $K'$ over $Y$.
\end{enumerate}
The associative law for composition can be deduced from that of fibre products. The reader may object that fibre products are defined only up to unique isomorphism and not on the nose: $\Cor_D$ is more properly a $(2,1)$-category whose mapping objects are groupoids. This issue is, in a sense, the crux of the whole paper, and we'll return to it shortly.\end{def0}

Each map $f:X\rightarrow D$ in $D$ defines two morphisms in $\Cor_D$: 
\begin{itemize}
\item a `right-way' map $[X\tilde\leftarrow X\stackrel{f}{\rightarrow} Y]$ from $X$ to $Y$, written $f_!$;
\item a `wrong-way' map $[Y\stackrel{f}{\leftarrow} X\tilde\rightarrow X]$ from $Y$ to $X$, written $f^!$.
\end{itemize}
These right and wrong way maps satisfy a compatibility condition called the \emph{base change formula}, which says that if 
\[\xymatrix{
  X\times_YZ \ar[r]^-{\bar g} \ar[d]_{\bar f}	& X \ar[d]^f \\
  Y \ar[r]^g 		& Z
}\]
is a fibre product square in $D$, then $g_!f^!=\bar f^!\bar g_!$, since either composite is given by the same correspondence $[X\leftarrow X\times_YZ \rightarrow Y]$. This is in a precise sense the \emph{only} relation: one can show, by explicitly checking relations, that to define a functor out of $\Cor_D$ it is enough to define a covariant and a contravariant functor out of $D$ such that this base change formula  is satisfied.

\paragraph{Correspondences and cohomology}

Students of the homology of manifolds, and its big city cousin, Grothendieck's theory of motives, will be familiar with settings in which data of the above form is available. That is, we are given a contravariant functor $H:D\rightarrow E^\op$ which has, at least for a subclass $S_D$ of morphisms in $D$, also a covariant behaviour in the form of \emph{transfer} or \emph{Gysin} maps. Perhaps after imposing further conditions, these satisfy a base change formula and hence define a functor
\[  \Spanext(H): \Cor(D;S_D) \rightarrow E^\op  \]
where $\Cor(D;S_D)\subseteq \Cor(D)$ is the subcategory where morphisms are those spans whose wrong-way component belongs to $S_D$.

\begin{eg0}[Cohomology]
Let $\bf{Sm}_\C$ denote the category of smooth quasi-projective varieties over $\C$ and let $\rm{pr|sm}$ be the class of projective submersions. De Rham cohomology with complex coefficients defines a functor
  \[ H_\rm{dR}^\bullet(-;\C):\bf{Sm}_\C \rightarrow (\rm{Vect}_\C^\Z)^\op \]
into the category of $\Z$-graded $\C$-vector spaces. For morphisms $f:X\rightarrow Y$ in $\rm{pr|sm}$ there is also a transfer map $f_!:H^\bullet(X;\C)\rightarrow H^{\bullet-\dim_\R(X/Y)}(Y;\C)$ defined using Poincar\'e duality, and this map satisfies the base change compatibility with pullbacks. (The transfer map is actually defined for any projective morphism in $\bf{Sm}_\C$, but we cannot always pull back projective morphisms and keep the domain smooth.) We therefore obtain a functor
  \[ \Spanext(\oplus_\bullet H_\rm{dR}^\bullet):\Cor( \bf{Sm}_\C, \rm{pr|sm} ) \rightarrow \rm{Vect}_\C^\op \]
  where we summed over the grading to avoid having to deal with the dimension shift. This construction parallels that of $H_\rm{dR}$ as a functor on Grothendieck's category of Chow motives.
  \end{eg0}

\begin{eg0}[Cochains]
The \emph{homotopy coherence} issue one encounters when attempting to construct a chain level version of $H_\rm{dR}$ is familiar. Although this can be solved using a sufficiently robust model for de Rham cochains, the modern approach is to find a native $\infty$-categorical construction.

Introducing Gysin maps and base change, the coherence issue is further compounded: the $\infty$-functor lift of $\Spanext(H_\rm{dR})$ must somehow encode coherences between base change homotopies in arbitrary composite grids of base change squares, in a 2-dimensional version of that classic problem of higher category theory. (The 2-cells in the $(2,1)$-category $\Cor_D$ would also have to come into play to `see' these base change homotopies.)

\end{eg0}

\begin{eg0}[Derived source]
If we want to generalise the domain to include objects of a homotopical nature, such as \emph{derived} manifolds or orbifolds --- and we do actually want to do this in applications to Gromov-Witten theory \cite{Mann_Robalo} --- then the source category is also an $\infty$-category; now we have to explain how to construct an $\infty$-category of correspondences from an input $\infty$-category, and how to construct $\infty$-functors out of it.\footnote{Another, completely separate, difficulty is entailed in this case, which is that Poincar\'e duality fails even at the level of homology. We don't address this here, but see \cite{Schurg}.
}
\end{eg0}

In the first instance, by-hand methods are quite sufficient for constructing a functor from the correspondences category. Subsequent examples, however, highlight two questions that we must answer before correspondences can be made available in a homotopical context:
\begin{enumerate}
\item When constructing functors from $\Cor_D$ into a higher category, how can we express the compatibility needed between the covariant and contravariant parts?

\item When $D$ is an $\infty$-category, the given \textbf{Definition} cannot be regarded as defining an $\infty$-category; it only specifies the 0- and 1-simplices, and the inner face map from two to one. How can we express the full associative structure of $\Cor_D$?
\end{enumerate}
There is an obvious way to attack problem ii) by extending the \textbf{Definition} to define a complete Segal space $\rm{Span}_{D,\bullet}$ whose associated $\infty$-category deserves to be called $\Cor_D$. This is the approach taken, for example, in \cite[Chap.\ 10]{HigherSegal}. However, when it comes to problem i), we are still left with trying to manipulate an infinite hierarchy of compatibilities.

While it is surely possible with enough effort to chase down these compatibilities directly, a more satisfying strategy is to try to define a `machine' that churns out the necessary relations automatically. For the present example, we need a machine that manufactures relations between the covariant and contravariant aspects of de Rham cohomology. To construct this machine, we will pass into the world of $(\infty,2)$-category theory.

\paragraph{Categorification}

We can extend the preceding \textbf{Definition} and enhance $\Cor_D$ to a 2-category of correspondences in $D$ by defining 2-cells as follows:
\begin{itemize}
  \item[(2)] A morphism from $[X\leftarrow K\rightarrow Y]$ to $[X\leftarrow K' \rightarrow Y]$ is a commutative diagram
  \[\xymatrix@R=2ex{
    & K \ar[dl] \ar[dd] \ar[dr] \\
    X && Y \\
    & K' \ar[ul] \ar[ur]
  }\]  
  in $D$.  
\end{itemize}
(There are other interesting possibilities for 2-cells in $\Cor_D$ \cite{GR, Haugseng}, but we will not pursue them in this paper.)

With this class of 2-cells, each `right-way' map is \emph{left adjoint}, in the sense internal to the 2-category $\Cor_D$, to the corresponding `wrong-way' map. The unit, respectively counit of the adjunction $f_!\dashv f^!$ is depicted 
\[\xymatrix{
  & X\times_YX\ar[dr]\ar[dl] &&& X\ar[dr]\ar[d]^\epsilon\ar[dl] 
  \\ 
  X\ar@{=}[r] & X\ar[u]^e & X\ar@{=}[l] & Y\ar@{=}[r] & Y\ar@{=}[r] & Y 
}\]
Conversely, any adjoint pair of maps in $\Cor_D$ is of this form. See \cite[Lemma 12.3]{Haugseng}.

This is a much tighter relationship between the right-way and wrong-way maps than the base change formula: it says that either of these two classes `completely determines' the other.

\begin{eg0}[Derived categories]
Passing through the HKR isomorphism, de Rham cohomology can be categorified to the derived category of coherent sheaves:
\begin{itemize}
\item Taking the bounded derived category of coherent sheaves defines an $\infty$-functor 
\[ D^b:\bf{Sm}_\C\rightarrow\bf{dgCat}_\C^\op. \] 
\item Periodic cyclic chains defines an $\infty$-functor from the category of small pre-triangulated dg-categories over $\C$ and exact functors into the derived category $D(\C[t^{\pm 1}])$ of complexes of $\C[t^{\pm 1}]$-modules. 
\item Ungraded homology defines a functor $D(\C[t^{\pm 1}]) \rightarrow \Mod_{\C[t^{\pm 1}]}$.
\item The Hochschild-Kostant-Rosenberg theorem identifies the composite of these three $\infty$-functors with $H_\rm{dR}[t^{\pm1}]$.\footnote{This version of HKR requires putting together several results from the literature. Briefly, and omitting several intermediate steps: by \cite{keller1999cyclic} cyclic chains of the derived category can be computed by Loday cyclic homology, that is, cyclic chains of the structure sheaf; \cite[Prop.~1.1]{Ben_Zvi_2012} then provides a natural isomorphism of this with de Rham cohomology.}
\end{itemize}
Now, this functor $D^b$ has an additional structure over its complex-level shadow $C_\rm{dR}$: the (derived) pullback and pushforward $\infty$-functors are adjoint to one another. Using this adjunction, one can associate to any morphism of spans
  \[\xymatrix@R=2ex{
    & K \ar[dl]_p \ar[dd]^h \ar[dr]^q \\
    X && Y \\
    & K' \ar[ul]^{p'} \ar[ur]_{q'}
  }\]  
a natural transformation $(q')_*(p')^*  \rightarrow  q_*h^*(p')^* \simeq q_*p^*$ of $\infty$-functors $D^b(X)\rightarrow D^b(Y)$. One would be forgiven for imagining that $D^b$ actually extends to an $(\infty,2)$-functor
  \[ D^b:\Cor(\bf{Sm}_\C,\;\rm{pr|sm}) \rightarrow \bf{dgCat}_\C^{\op_1} \]
into a suitably defined $(\infty,2)$-category of dg-categories, dg-functors, and dg-natural transformations. Since pushforwards are `determined', being adjoints, by pullbacks, one might further imagine this extension to be unique.
\end{eg0}

\paragraph{Bivariant Yoneda philosophy}

A functor from an $(\infty, 1)$-category into an $(\infty, 2)$-category that --- like the inclusion of $D$ into $\Cor_D$ --- takes every map in (a certain marked subset of) the source to a left adjoint is called \emph{bivariant}. (For simplicity, in this introduction we suppress further reference to the marked subset.) Bivariant functors from $D$ into the $(\infty,2)$-category of $(\infty,1)$-categories that satisfy the base change formula themselves form an $(\infty,2)$-category $\Biv_D$. The machine we will construct in this paper is a `Yoneda API' for $\Biv_D$.

To each object $x$ of $D$, we assign a bivariant theory $\Cor_D(x,-)$ of correspondences into $x$: this defines a faithful \emph{Yoneda embedding} $\rm y^\mp:D\rightarrow\Biv_D^\op$ which is itself bivariant. It is not full, but its essential image can be described using the bivariant version of the \emph{Yoneda lemma}:
\begin{thm0}[\ref{BIV_YON_LEMMA}]
Let $F$ be a bivariant functor of $D$. The  bivariant evaluation mapping \[ \rm{ev}:\Biv_D\left[\Cor_D(x,-),F(-)\right]\rightarrow F(x) \] is an equivalence of categories, where $\Biv_D[*,*]$ denotes bivariant natural transformations.
\end{thm0}
\begin{proof}[Sketch of proof.]Considered via the Grothendieck construction as a bi-Cartesian fibration, the representable bivariant functor $\Cor_D(x,-)$ is the free co-Cartesian fibration on the free Cartesian fibration on the singleton $\{x\}\rightarrow D$.
\end{proof}

\noindent It follows that the \emph{bivariant Yoneda image} $\rm y^\mp(D)$ of $D$ --- that is, the full $(\infty,2)$-subcategory of $\Biv_D$ spanned by representable bivariant functors --- is itself a good candidate for an $(\infty,2)$-category of correspondences.

\begin{cor0}The bivariant Yoneda image of $D$ is an $(\infty,2)$-category of correspondences for $D$ in the sense that its objects are the objects of $D$, and for any $x,y:D$, the category of maps between $x$ and $y$ in $\Biv_D$ is the category $\Cor_D(x,y)$ of spans between $x$ and $y$. \hfill\eqref{BIV_LOC_IMAGE}

Moreover, under this identification, the composition map in $\rm y^\mp(D)$ is isomorphic to the composite of correspondences computed by the fibre product.\hfill\eqref{BIV_COMP_PROP}
\end{cor0}

In \S\ref{BIV_EXT}, we apply a kind of `mixed Kan extension' construction that takes in bivariant functors of $D$ and outputs 2-functors of the bivariant Yoneda image of $D$. 

\begin{thm0}[\ref{BIV_EXT_THM}]

Let $K$ be an $(\infty,2)$-category. There is an $(\infty,2)$-functor 
\[ \Spanext:\Biv(D,K) \longrightarrow 2\Fun(\rm y^\mp(D),K) \]
of \emph{span extension} to the Yoneda image. As an $(\infty,1)$-functor, it is a section of the functor of restriction along $D\subseteq \rm y^\mp(D)$.
\end{thm0}
\noindent Finally, in \S\ref{UNIV}, we apply a 2-dimensional Grothendieck construction to put the extension functor of Theorem \ref{BIV_EXT_THM} in families, hence recovering a universal property. The Grothendieck construction we invoke is not quite justified in the literature at time of writing, so these results are best considered as conditional --- see the disclaimer below.
\begin{thm0}[\dag\hskip.5pt) (\ref{UNIV_EXT_THM}]
The inclusion $i:D\rightarrow\Cor_D$ is universal among bivariant functors of $D$. That is, for any $(\infty,2)$-category $K$, restriction along $i$ yields an equivalence of $(\infty,2)$-categories \[ \Biv(D,K)\cong2\Fun(\Cor_D,K). \]
Moreover, this equivalence is a \emph{natural} equivalence of $(\infty,3)$-functors of $K$.
\end{thm0}
\noindent We also obtain a monoidal version:
\begin{thm0}[\dag\hskip.5pt) (\ref{UNIV_MON_THM}]
Let $(D,\otimes)$ be a symmetric monoidal $\infty$-category with pullbacks, and suppose that pullbacks are preserved by $\otimes$. There is a unique symmetric monoidal structure on $\Cor_D$ making the inclusion of $D$ strongly monoidal.

The inclusion $i:(D,\otimes), \rightarrow (\Cor_D,\otimes)$ is universal among symmetric monoidal bivariant functors of $D$. That is, for any symmetric monoidal $(\infty,2)$-category $(K, \otimes)$, restriction along $i$ yields an equivalence of $(\infty,2)$-categories 
\[ \Biv^\otimes(D,K)\cong2\Fun^\otimes(\Cor_D,K). \]
Moreover, this equivalence is a natural equivalence of $(\infty,3)$-functors of the symmetric monoidal 2-category $(K,\otimes)$.
\end{thm0}
\noindent These results provide the machine we were looking for to produce functors out of correspondence categories: to be a functor of $\Cor_D$ is simply a \emph{property} of a functor of $D$. In particular, since categories of correspondences are characterised by a universal property, this machine allows us to bypass the need to make the coherently associative structure of $\Cor_D$ explicit.

\paragraph{Disclaimer}The main result of this paper already appears in a published work as \cite[Thm.\ 8.1.1.9]{GR}. In comparison with \emph{op.~cit}, we offer two advantages:
\begin{itemize}
\item Conceptual: the present paper avoids explicit manipulation of simplices in the Segal space model. Instead, we develop much of what might be termed the `internal logic' of $(\infty,2)$-category theory and couch our arguments in this language. The proof of the extension theorem fits into a larger context of `Yoneda type operations' in higher category theory. This, I believe, makes it both more transparent and more amenable to further generalisation. 

Beyond its interest in its own right, the methods used to obtain Theorem \ref{UNIV_EXT_THM} can also serve as a guide to the future development of $(\infty,2)$ (and higher) category theory.

\item Unconditional: All of the results of \emph{op.~cit}.\ are conditioned on a list of `unproved statements' \cite[(10.0.4.2)]{GR}.\footnote{More precisely, the authors do not distinguish between results that are conditional on this list and those that are independent. From my own inspection it seems to me that even the most basic foundations of $(\infty,2)$-category theory enumerated in \cite[\S10]{GR} are so conditioned.} Conversely, at least the first half of our results, that is, the extension theorem \ref{BIV_EXT_THM} and most of the example constructions of \S\ref{EX}, are established unconditionally.
\end{itemize}
The universal property of correspondences and its symmetric monoidal variant is conditional on a restricted form of $(\infty,2)$-categorical Grothendieck construction for functors from an $(\infty,1)$-category into $2\Cat$. Results that are conditioned in this way are flagged with a (\dag) (for example, Theorem \ref{UNIV_EXT_THM} quoted above). 
For remarks on the epistemological status of the $(\infty,2)$-categorical Grothendieck construction, see \ref{UNIV_GROT_EPIST}.

\paragraph{Acknowledgements}I thank Dennis Gaitsgory, whose questions about this work drove me to think more carefully about the theoretical background upon which it is based. Thanks are also due to David Gepner, Aaron Mazel-Gee, and David Carchedi for helping me with the literature and sharing their perspectives on higher categories; I thank Rune Haugseng for drawing my attention to the work of Hinich on the enriched Yoneda lemma and for many educational discussions.

\section{Higher category theory}\label{CAT}

The purpose of this section is to assemble the basic properties and construction that we need from $(\infty,n)$-category theory before leaping into the study of bivariant functors, collating references to the literature and plugging some gaps.

\subsection{Homotopy invariance in higher category theory}
\label{CAT_INV}

The language used in this paper is, so far as possible, \emph{homotopy invariant}. This means that every statement --- including every hypothesis and each intermediate step in every proof --- is a construction of an $\infty$-functor of $\infty$-categories. We will usually try to make such statements `as functorial as possible' so that they not only respect the notion of equivalence in the domain of discourse --- hence `homotopy invariant' --- but also some notion of morphism.

\begin{para}[Homotopy invariant definitions and constructions]
\label{CAT_INV_DEF}
A \emph{homotopy-invariant definition} is a specification of a set of connected components of some simplicial set, called the domain of discourse, provided by the context. A \emph{homotopy-invariant construction} is a homotopy-invariant definition of a contractible simplicial set, i.e.~of a homotopically unique vertex of the domain of discourse. We use the English definite article to refer to any vertex of a contractible domain of discourse, so that, for example, in the homotopy theory of Kan complexes, ``the one-point space'' actually means ``a contractible Kan complex''.

The specification itself, including the domain of discourse, should ideally not invoke any notions which are not themselves homotopy-invariant. However, in the foundations we are using, at some fundamental level we need to invoke an explicit model to be able to write anything down. Of course, it does not matter which specific models we pick. 

For concreteness, let us use as a basic domain of discourse Lurie's quasi-category $\Cat_\infty$ \cite[Def.~3.0.0.1]{HTT}, for the category of $\infty$-categories, and Lurie's construction of simplicial mapping sets in $\Cat_\infty$ for spaces of $\infty$-functors \cite[\S2.2]{HTT}. We discuss the associative composition law for $\infty$-categories in \S\ref{CAT_TENS}. We are free to replace these simplicial sets with any equipped with an equivalence with Lurie's models, and this will sometimes be necessary.

In the sequel, we will drop the prefix $\infty$ and refer to the objects (vertices) of $\Cat_\infty$, which we rechristen $1\Cat$, as 1-categories. Functor categories are denoted $1\Fun(-,-)$. 
\end{para}

\begin{para}[Using \cite{HTT}]
\label{CAT_INV_HTT}
Our requirement that every sentence be homotopy invariant necessitates that we diverge slightly from the approach of \cite{HTT}. To wit, some of our definitions --- notably, that of Cartesian fibration --- differ slightly from Lurie's treatment in that ours relax all non homotopy invariant conditions --- in the case in point, to be an inner fibration.

Despite this discrepancy, we freely invoke the results and constructions of \cite{HTT}, even if the methods used are \emph{not} homotopy-invariant (as is often the case), provided that the output can be given a homotopy-invariant characterisation. 
\end{para}

\begin{remark}
The constructions of \cite{HTT} often produce simplicial sets, and not merely contractible spaces of simplicial sets. Therefore, many of the constructions of this paper, if interpreted using these rigidified constructions, themselves output on-the-nose quasi-categories. However, this does not apply to constructions that invoke Kan extensions, which are only ever defined up to a contractible choice.
\end{remark}

\begin{para}[Formulas in higher category theory]
\label{CAT_TYPE}

Some of the arguments of this paper are expressed by manipulating formulas, also known as \emph{anonymous functions}, using the $\return$ symbol. This practice is typical in other fields of mathematics but rare in homotopy theory, essentially because a map of spaces or $\infty$-categories is not at all determined by its action on objects, which is all that this notation usually makes explicit. Nonetheless, with enough care it can be made rigorous in our setting as well. The basic format is explained below.

This notation is convenient when dealing with multivariate expressions, particularly when they express different bivariance behaviour in each factor and when we need to pass through multiple hom-tensor adjunctions. This is the situation encountered in \S\ref{BIV_EXT}. 


\begin{labelitems}
\item[Object declaration] Objects of categories (either variables or constants) are declared with a \emph{colon}, i.e.\ $a:A$ states that $a$ is an object of $A$ (hence is equivalent to the usual notation $a\in A$). As usual, we may also write $f:A\rightarrow B$ or $g:C\cong D$ to declare a function or isomorphism; the category in which this morphism or isomorphism lives, if unclear, can be underset.

\item[Functor declaration] We will make definitions via formulas of the form 
\begin{align*} 
\alignhead{Generic $n$-functor}
 a_0:A_0,\ldots,\ a_k:A_k\quad \return_{n\Cat} \quad \ b:B
\end{align*}
 where $n:\N$. Such a formula declares $n$-categories $A_i$, $B$, and an $n$-functor $\prod_0^kA_i \rightarrow B$ (the particular value of $n$ specified by the underset to $\mapsto$); hence, $b$ must be an $n$-functorial formula for an object of $B$ in terms of the free variables $a_i$. The minimum level of functoriality for a formula is as a functor of spaces. (We explain the definitions of $n$-category and $n$-functor for $n>1$ below in \S\ref{CAT_MODELS}.)
 
If the domain types $A_i$ are defined \emph{a priori} as $m$-categories, where $m>n$, then an expression in the form \emph{Generic $n$-functor} should be interpreted as defining a function on the $n$-core \eqref{CAT_FURTHER_CORE}.

\item[Composition of functors] 
  Composites of functors expressed in this way can be written using the usual method of variable substitution.

\item[Natural transformation declaration] 
When an arrow or isomorphism symbol appears to the right of $\mapsto$, it is interpreted as a natural transformation of the variables to the left. Thus we write:
\begin{align*}
\alignhead{Generic $n$-natural transformation}
  a:A\quad & \return_{n\Cat} \quad f:B\underset{D}{\rightarrow} C \\
\alignhead{Generic $n$-natural isomorphism}
  a:A\quad & \displaystyle\return_{n\Cat} \quad f:B\underset{D}{\cong} C
\end{align*}
for a natural transformation of functors $B(a)\rightarrow C(a)$ from $A$ to $D$.

When the target category is $m\Cat$, we can make the natural transformation itself anonymous by nesting formulas, as in:
\begin{align*} 
a:A\quad  \return_{n\Cat} \quad (b:B\quad \return_{m\Cat} \quad c:C) 
\end{align*}
(brackets, added for clarification here, may be omitted). This notation can be converted into the format \emph{Generic $n$-natural transformation} by naming the bracketed function: $f(b)=c$.

\item[Currying] Passing through the  Cartesian closure equivalence $n\Fun(A\times B,C)\cong n\Fun(A,n\Fun(B,C))$ is called `currying'. It gives us the inference rules
\begin{align*}
  \quad a:A,\ b:B \quad  
    & \mapsto\quad c:C \\
  \Rightarrow\hspace{6em} b:B\quad
    & \return\quad f:A\rightarrow C. 
\intertext{in the named format, or}
  \quad a:A,\ b:B \quad  
    & \return\quad c:C \\
  \Rightarrow\hspace{6em} b:B\quad
    & \return\quad a:A \quad\return\quad  c:C. 
\end{align*}
in the anonymous format.
\end{labelitems}
\end{para}

\subsection{Models of $n$-category theory}\label{CAT_MODELS}

The underlying philosophy of this paper is to use only `model-independent' constructions. However, since we do not have to hand a full definition of the syntax of $(\infty,n)$-category theory, we are forced at some point to found our constructions on models. In this section we discuss and fix equivalences between some known models for $(\infty,n)$-category theory so that when necessary we may freely switch between them without creating ambiguity.


\begin{para}[Approaches to $n$-categories]\label{CAT_MODELS_LIST}There are several equivalent definitions of the homotopy theory of $(\infty,n)$-categories (henceforth: $n$-categories).  
\begin{itemize}\item The original approach of C.\ Barwick \cite{Barwick_thesis,GR} defines them to be complete $n$-fold Segal spaces.
\item As localisations of the category of presheaves on some predefined category of generators \cite{Rezk, BS-P}.
\item Inductively as enriched categories \cite{Simpson,Gepner_Haugseng,Hin}.\end{itemize}
Each of the above works define a $1$-category of $n$-categories.\end{para}

\begin{para}[Uniqueness of the theory of $n$-categories]
\label{CAT_MODELS_UNIQUE}

The results of \cite{BS-P} provide a framework for comparing the $1$-categories produced by the approaches enumerated in \ref{CAT_MODELS_LIST}. It defines a theory of $n$-categories to be a presentable 1-category $n\Cat$ equipped with an embedding of a certain fixed category of generators $\mathbb G_n=\langle C_k\rangle_{k=0}^n$ consisting of `$n$-cells' and satisfying certain additional conditions \cite[Def.\ 7.1]{BS-P}. Let us call the embedding $\mathbb G_n\hookrightarrow n\Cat$ the \emph{orientation}.

The authors then compute \cite[\S4]{BS-P} that
\[\Aut(n\Cat)=\Aut(\mathbb G_n)=(\Z/2\Z)^n\]
with named generators $\op_k$ defined on $\mathbb G_n$ so that they `reverse $k$-cells'. In particular, the space of \emph{oriented} $n$-category theories is contractible.

As a special case, by fixing the embedding of $\langle\Delta^0,\Delta^1\rangle$, the various approaches outlined above applied in the case $n=1$ are identified with the category of $1$-categories described in \cite{HTT} (and on which all the above are based).\end{para}

\begin{remark}[Bergner-Rezk hierarchy]In \cite{BS-P} the category of cells is defined using a prior notion of strict $n$-category. This can be avoided by basing the definitions instead on the Bergner-Rezk hierarchy $\{\Theta_n\}_{n:\N}$ \cite{Berger, Rezk}, a variation that is allowed for in \cite[\S11]{BS-P}. The $(1,1)$-categories $\Theta_n$ are defined inductively via a combinatorial procedure called the categorical wreath product. Their objects can be interpreted as strict $n$-categories obtained as a lattice-like concatenation of $n$-dimensional cells. 

\end{remark}

\begin{para}[Orientations of the models]The models listed in \ref{CAT_MODELS_LIST} are oriented as follows:
\begin{itemize}\item An $n$-category theory produced by \cite[Thm.~11.2]{BS-P} comes equipped with an orientatation by (R.4) of \emph{loc.~cit}. 

Thus, Rezk $\Theta$-spaces and $n$-fold complete Segal spaces are oriented by the construction of \cite[\S13, 14]{BS-P};
\item $(n-1)\Cat$-enriched categories are oriented by transfer along the functor from the enriched model to the iterated Segal space model constructed in \cite[\S7]{Rune_rectification}.
\item In the case $n=1$, the original category of quasi-categories is oriented by the tautological inclusion $\{\Delta^0,\Delta^1\}\subset1\Cat$. The standard identification $C \return [[n]\return \Ob(C^{\Delta^n})]$ of $1\Cat$ with the category of complete Segal spaces is compatible with the stated orientations.
\end{itemize}
This fixes equivalences between the models and ensures that any combination thereof must commute.\end{para}

\begin{para}[Where do we use the models?]

Apart from the basic properties of the $1$-category of $n$-categories provided by \cite{BS-P}, we explicitly invoke models for three types of statements:
\begin{itemize}
\item We need the enriched categories model to be able to apply the methods of \cite{Hin}, in particular, his Yoneda lemma.

\item I prove the recognition principle for limit cones in $n\Cat$ in terms of mapping categories using the enriched categories model.

\item I invoke the Segal space model to construct 2-subcategories fitting a specification.

\end{itemize}
\end{para}

\subsection{Mapping objects in higher categories}
\label{CAT_FURTHER}
\label{CAT_ENRICH}

Category theory comprises more than just the concept of category and functor; consequently there is more to it than can be extracted directly from the uniqueness theorem \ref{CAT_MODELS_UNIQUE}. In this section we gather some basic facts and constructions that can be defined using the notion of mapping object between two objects of an $n$-category.


\begin{para}[Enrichment]

One of the main things about $n$-categories is that they are enriched in $(n-1)$-categories. That is, each $n$-category $K$ has an underlying space $\Ob(K)$, and for each pair of objects $x,y:K$ there is a \emph{mapping $(n-1)$-category} $K(x,y):(n-1)\Cat$. These $(n-1)$-categories are collectively equipped with an associative composition law. The precise meaning of `associative composition law' is captured by the notion of enriched $\infty$-category \cite{Gepner_Haugseng,Hin}. For the purposes of this paper, however, we will not need to unpack it any further.
\end{para}

\begin{remark}[A look ahead]

There is also a more general notion of enrichment, defined in \cite{Hin}, where $K(-,-)$ is a bifunctor on an underlying 1-category, rather than just a space. This will be important when we discuss enrichments via tensoring or powering in \S\ref{CAT_TENS}. We also discuss there the comparison, in the case $n=1$, of this mapping bifunctor with the fundamental one associated to the 1-categorical Yoneda embedding \cite[\S5.1.3]{HTT}.
\end{remark}

\begin{para}[Specification of subcategories]
\label{CAT_FURTHER_SPECDEF}

A basic way of constructing new $n$-categories is to \emph{specify} them as subcategories of a given $n$-category by specifying the objects, morphisms, and $k$-cells for $k$ up to $n$. I will make this precise in the case $n=2$. A \emph{specification} $S$ on a 2-category $K$ comprises the following data:
\begin{enumerate}[label=\arabic*), start=0]
\item A subset $S_0\subseteq\pi_0\Ob(K)$.
\item A subset $S_1\subseteq\pi_0\Ob(K^{\Delta^1})$, all of whose members have source and target in $S_0$, and closed under compostion.
\item A subset $S_2\subseteq\pi_0\Ob(K^{c_2})$ of the set of isomorphism classes of 2-cells in $K$, all of whose members have source and target in $S_1$, and closed under composition (both of 2-cells and with 1-cells in $S_1$).
\end{enumerate}
A 2-functor $F:K^\prime\rightarrow K$ is said to \emph{fit} the specification $S$ if it maps all objects, morphisms, and 2-cells of $K^\prime$ into $S_0,\,S_1,\,S_2$. If $F$ is monic --- that is, induces monomorphisms on spaces of objects, morphisms, and 2-cells --- say that $F$ \emph{exactly} fits the specification if  it identifies the isomorphism classes of objects, morphisms, and 2-cells of $K^\prime$ with $S_0,\,S_1,\,S_2$.
\end{para}

\begin{lemma}[Existence of specified 2-subcategories]
\label{CAT_FURTHER_SPEC}

Let $K:I\rightarrow 2\Cat$ be a 1-functor and, for each $i:I$, let $S_i$ be a specification on $K_i$. Suppose that for each map $\phi:i\rightarrow j$ in $I$, $\phi:K_i\rightarrow K_j$ maps $S_i$ into $S_j$. Then there is a unique subfunctor $K^\prime$ of $K$ such that for each $i:I$, $K_i^\prime\subseteq K_i$ exactly fits the specification $S_i$. 

Moreover, a natural transformation $L\rightarrow K$ of functors $I\rightarrow 2\Cat$ factors through $K^\prime$ (necessarily uniquely) if and only if it fits the specification $S_i$.
\end{lemma}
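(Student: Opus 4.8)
The plan is to realise everything inside the complete $2$-fold Segal space model of $2\Cat$, as flagged in \ref{CAT_FURTHER_SPECDEF}, where a subobject is modelled concretely as a levelwise inclusion of connected components. First I would fix a strict (pointwise) lift of the diagram $K\colon I\to 2\Cat$ to a diagram $i\mapsto K_{i;\bullet\bullet}$ of complete $2$-fold Segal spaces, so that for each $i$ and each bidegree $(p,q)$ there is an honest space $K_{i;p,q}$, functorial in $I$. The cells live in the corner degrees: objects in $K_{i;0,0}$, $1$-cells in $K_{i;1,0}$, and $2$-cells in $K_{i;1,1}$, and the specification $S_i$ selects subsets of their sets of components. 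I would then set $K'_{i;p,q}\subseteq K_{i;p,q}$ to be the union of those connected components every one of whose cells --- every object, edge, and $2$-cell obtained by applying face maps --- lies in $S_i$. This is a point-set operation on bisimplicial spaces, manifestly functorial in any map carrying the cells of $S_i$ into those of $S_j$; I would record that the existence of $K'$ as a genuine \emph{subfunctor} forces, and is supplied by, exactly this compatibility of the family $\{S_i\}$ with the transition functors $K_\alpha$, since without it no subfunctor can fit the $S_i$ simultaneously.

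The first substantive step is to check that each $K'_{i;\bullet\bullet}$ is again a complete $2$-fold Segal space, i.e.\ that the two Segal conditions, the essential constancy of $K'_{i;0,\bullet}$, and completeness all descend to the selected components. For the Segal maps this is where closure under composition is used: the Segal equivalence identifies $K_{i;p,q}$ with an iterated fibre product of copies of $K_{i;1,q}$ over $K_{i;0,q}$, and a composable chain has all its cells --- including the composite (long) edges and pasted $2$-cells --- in $S_i$ if and only if each atomic piece does, precisely because $S_1,S_2$ are closed under composition. Hence component-selection commutes with these fibre products and the Segal maps restrict to equivalences. Essential constancy is immediate, and completeness descends because, $S_1$ containing the identities of the objects of $S_0$ (nullary composition), the equivalence between the space of objects and the space of equivalences restricts to one between the $S_0$-components and the invertible $S_1$-components: identities of $S_0$-objects are in $S_1$, while an invertible $S_1$-morphism has source in $S_0$ and is connected to that identity. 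That $K'_i$ \emph{exactly} fits $S_i$ is then read off the corner degrees by construction.

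It remains to establish the universal property, which simultaneously yields uniqueness and model-independence. By the definition in \ref{CAT_FURTHER_SPECDEF} a monic $2$-functor induces monomorphisms on the three cell spaces; since a monomorphism of spaces is exactly an inclusion of connected components, and the Segal conditions recover every higher degree as a fibre product of the corner data, a sub-$2$-category $K'\hookrightarrow K$ is determined by which components it selects in the three cell degrees. Consequently any two monic $2$-functors exactly fitting $S_i$ select the same components everywhere and agree, giving uniqueness. For the factorisation statement, given a natural transformation $L\to K$: if it factors through $K'$ then its image cells lie in $S_i$ because those of $K'_i$ do, so it fits; conversely, if $L_i\to K_i$ fits $S_i$, then every $(p,q)$-simplex of $L_i$ has all its faces sent into $S_i$, hence its image lies in $K'_{i;p,q}$, giving a (necessarily unique, as $K'\hookrightarrow K$ is monic) levelwise factorisation, natural in $(p,q)$ and in $I$.

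The main obstacle I anticipate is that first substantive step: verifying that the naive component-selection preserves the Segal and, especially, the completeness conditions, and doing so within the strict model so that the output is genuinely a complete $2$-fold Segal space and genuinely functorial in $I$. The closure-under-composition and unit clauses in the definition of a specification are exactly what is needed, and isolating their role is the crux; once the selected bisimplicial space is known to be a complete $2$-fold Segal space, the universal property and uniqueness are formal.
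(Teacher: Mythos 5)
Your proof is correct in substance, but it reaches the result by a different mechanism than the paper. The paper's own proof is a two-line affair: it curries $K$ into a single space-valued functor $\tilde K\colon I\times\Delta\times\Delta\rightarrow\Spc$ and takes the Grothendieck integral $\int_{I\times\Delta\times\Delta}\tilde K$, so that producing the subfunctor $K'$ becomes the already-available operation of specifying a $1$-subcategory of a single $1$-category; naturality --- in $I$ and in both simplicial directions at once --- is absorbed by the fibrational formalism, and no strictification is ever performed. You instead rectify the diagram to a strict pointwise diagram of bisimplicial spaces and build $K'$ levelwise by selecting components. That rectification is genuine extra machinery which you treat as free (and it sits a little awkwardly with the paper's homotopy-invariance policy), but it is standard, so this is a legitimate alternative route rather than a gap; you could also avoid it entirely by working directly with the curried functor $I\times\Delta^{\op}\times\Delta^{\op}\rightarrow\Spc$, which needs no strictness. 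What your approach buys is precisely what the paper's proof suppresses: an explicit check that the selected components again form a \emph{complete} $2$-fold Segal space --- that closure of $S_1,S_2$ under composition is what makes the Segal maps restrict to equivalences, and that the unit clause makes completeness descend. That verification is needed on the paper's route too and is nowhere spelled out there. You also correctly isolate the implicit hypothesis that the transition functors carry $S_i$ into $S_j$; the lemma fails without it, and neither the statement nor the paper's proof mentions it. One place to tighten: in the completeness step, the real content is that the \emph{inverse} of an invertible $S_1$-morphism lies in the same component of the space of $1$-morphisms as the identity of an $S_0$-object, hence is itself selected, so that invertibility in $K$ and in $K'$ agree on $K'$; your one-line sketch is compressing exactly this point and it deserves to be written out.
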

\begin{proof}
Via the Segal space model, interpret $K$ as a bifunctor $\tilde K:I\times\Delta\times\Delta\rightarrow\bf{Spc}$. The required subcategories can be constructed as 1-subcategories of the Grothendieck integral $\int_{I\times\Delta\times\Delta}\tilde K$.
\end{proof}

\begin{para}[Core]
\label{CAT_FURTHER_CORE}
The inclusion $m\Cat\hookrightarrow n\Cat$ for $m\leq n$ has a right adjoint called the $m$-\emph{core}. The $m$-core of an $n$-category $K$ is the subcategory with the specification
\begin{itemize}
\item $S_k = $ all $k$-cells of $K$, $0\leq k\leq m$;
\item $S_k=$ invertible $k$-cells on members of $S_{k-1}$, $m<k$.
\end{itemize}
That this indeed defines the right adjoint can be seen from the mapping property of subcategories defined by specification, Lemma \ref{CAT_FURTHER_SPEC}. Where we cannot avoid an express notation, we write $\lie c_mK$ for the $m$-core of $K$. Here is an example of a situation where it can be avoided: if $K,~K^\prime$ are $n$-categories, we say an $m$-\emph{functor $K\rightarrow K^\prime$} as an abbreviation for an $m$-\emph{functor $\lie c_mK\rightarrow K^\prime$}.

The 0-core of a $n$-category $K$ is the same as its space $\Ob(K)$ of objects. When $n=1$ and we regard $K$ as a quasi-category, this 0-core is modelled by the largest Kan complex contained in $K$: cf.~\cite[Prop.~1.2.5.3]{HTT} for the appropriate mapping property.
\end{para}

\begin{lemma}[Recognition of limit cones]
\label{CAT_FURTHER_LIMIT}

A diagram $C:I^\triangleleft\rightarrow n\Cat$ of $n$-categories is a limit cone if and only if:
\begin{enumerate}
\item $\Ob(C):I^\triangleleft \rightarrow \bf{Spc}$ is a limit cone of spaces;
\item for each $x,y:C_0$ \emph{(where $0$ is the cone point)}, $C(x,y):I^\triangleleft \rightarrow (n-1)\Cat$ is a limit cone.
\end{enumerate}
\end{lemma}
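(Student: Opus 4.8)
The plan is to work in the enriched-categories model, where $n\Cat=\Cat_{(n-1)\Cat}$ is legitimate by \ref{CAT_MODELS_UNIQUE} and each $n$-category $A$ is presented by its object space $\Ob(A)$ together with the mapping $(n-1)$-categories $A(x,y)$. Conditions (i) and (ii) say exactly that the object space and the hom-objects of the cone point are computed as limits, and the substance of the lemma is that these two data \emph{jointly} detect limit cones. I would establish necessity first, directly, and then bootstrap sufficiency from it using the recognition criterion for equivalences of $n$-categories; no induction on $n$ is needed, since the mapping objects are treated as honest $(n-1)$-categories throughout.

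For necessity, condition (i) is immediate: by \ref{CAT_FURTHER_CORE} the object-space functor $\Ob=\lie c_0:n\Cat\to\Spc$ is the $0$-core, hence a right adjoint, and so carries limit cones to limit cones. For (ii) I would use the presentation of a mapping object as a fibre of a cotensor: for any $n$-category $A$ and $x,y:A$,
\[ A(x,y)\;\simeq\;\{x\}\times_A A^{\Delta^1}\times_A\{y\}, \]
the pullback of the endpoint map $A^{\Delta^1}=\Fun(\Delta^1,A)\to A\times A$ along $(x,y)$. The cotensor $(-)^{\Delta^1}$ is right adjoint to $(-)\times\Delta^1$ and so preserves limits, and the pullback is itself a limit; since $\Ob$ preserves limits, an object $x$ of the cone point $C_0$ determines a compatible family $(x_i)_{i:I}$ with $x_i:C_i$. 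Interchanging the $I$-indexed limit with the pullback then yields $C_0(x,y)\simeq\lim_i C_i(x_i,y_i)$, which is precisely the assertion that $C(x,y):I^\triangleleft\to(n-1)\Cat$ is a limit cone; here I use that $(n-1)\Cat\hookrightarrow n\Cat$ is reflective (the reflector inverts the top-dimensional cells, dually to the coreflection of \ref{CAT_FURTHER_CORE}), so that the limit is the same whether formed in $(n-1)\Cat$ or in $n\Cat$.

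For sufficiency, assume (i) and (ii), write $L=\lim_i C_i$ for the genuine limit, and let $F:C_0\to L$ be the comparison functor. I would show $F$ is an equivalence via the recognition criterion valid in the enriched model, namely that $F$ is an equivalence if and only if $\Ob(F)$ is an equivalence of spaces and $F$ is fully faithful. On objects, (i) gives $\Ob(C_0)\simeq\lim_i\Ob(C_i)$ and limit-preservation of $\Ob$ gives $\Ob(L)\simeq\lim_i\Ob(C_i)$, so $\Ob(F)$ is an equivalence. For full faithfulness, fix $x,y:C_0$ with induced families $(x_i),(y_i)$: condition (ii) gives $C_0(x,y)\simeq\lim_i C_i(x_i,y_i)$, while the already-established necessity direction, applied to the limit cone defining $L$, gives $L(Fx,Fy)\simeq\lim_i C_i(x_i,y_i)$ compatibly. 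Thus $F_{x,y}$ is the canonical comparison between two presentations of one and the same limit, hence an equivalence, and so $F$ is an equivalence and $C$ is a limit cone.

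The step I expect to cost the most care is the interchange in necessity (ii): keeping the chosen pair $(x,y)$ at the cone point coherent with the families $(x_i),(y_i)$ while commuting the pullback past the $I$-limit. This is where the cotensor description of mapping objects and the stability of $(n-1)\Cat$-limits inside $n\Cat$ are genuinely used; everything else is formal, resting on the adjunctions of \ref{CAT_FURTHER_CORE} together with the equivalence-recognition criterion for enriched categories.
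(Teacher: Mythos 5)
Your overall decomposition (object spaces plus mapping objects, working in the enriched model) is the same as the paper's, whose one-line proof reads off both conditions from the presentation of $n\Cat$ as a limit-closed subcategory of a Cartesian fibration over $\Spc$ whose fibre over a space is the category of $(n-1)\Cat$-enrichments of that space. But your execution has a genuine gap: the formula $A(x,y)\simeq\{x\}\times_A A^{\Delta^1}\times_A\{y\}$, with $(-)^{\Delta^1}$ the cotensor for the \emph{Cartesian} closed structure (right adjoint to $-\times\Delta^1$, as you specify), is false for $n\geq 2$. A morphism in that fibre is a functor $\Delta^1\times\Delta^1\rightarrow A$ restricting to identities on the two ends; since $\Delta^1\times\Delta^1$ is a 1-category, the cell filling the square is necessarily \emph{invertible}, so the fibre sees only invertible 2-cells of $A$. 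It is therefore the space $\Ob(A(x,y))=\lie c_0(A(x,y))$, not the mapping $(n-1)$-category $A(x,y)$. Concretely, take $A=1\Cat$ viewed as a 2-category and $x=y=\Delta^1$: then $A(x,y)=1\Fun(\Delta^1,\Delta^1)$ contains non-invertible natural transformations, while your pullback contains only the invertible ones. The arrow object whose fibres are the genuine mapping categories is the \emph{lax} arrow category, adjoint to the Gray tensor product rather than the Cartesian product --- a structure the paper explicitly flags (in the aside on the holistic construction of the composition law) as distinct from the standard one and not developed here.

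This is not a repairable side remark but the load-bearing step: your necessity argument for (ii) only establishes that the \emph{object spaces} $\Ob(C_i(x_i,y_i))$ form a limit cone, not the mapping $(n-1)$-categories themselves (a genuinely weaker statement once $n\geq2$, since equivalences are not detected on 0-cores). And because your sufficiency direction is bootstrapped from necessity --- you apply it to the genuine limit $L$ to identify $L(Fx,Fy)$ with $\lim_i C_i(x_i,y_i)$ --- full faithfulness of the comparison functor $F$ is not established either, so both directions collapse. The fix is essentially to abandon the cotensor description and argue as the paper does: in the enriched model, an $n$-category is an object-space together with an enrichment, $n\Cat$ sits limit-closed inside the total category of the Cartesian fibration of enrichments over $\Spc$, and limits in such a fibration are computed as a limit in the base (your condition (i), via the right adjoint $\Ob$) together with a limit in the fibre over the limit point, which is exactly the mapping-object condition (ii).
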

\begin{proof}
Clear from the description of $n\Cat$ as a (limit-closed subcategory of a) Cartesian fibration over $\bf{Spc}$ with fibre the category of $(n-1)\Cat$-enrichments of that space.
\end{proof}

\subsection{Enrichment from tensoring}
\label{CAT_TENS}

The work \cite[\S6]{Hin} provides a method to upgrade $1$-functors into a $\sh V$-tensored category to $\sh V$-enriched functors. In this section, we review the construction and compare it to the structures already described. I freely employ terminology from \emph{op.~cit}.. At the end of the section, we discuss a construction that uses a powering rather than a tensoring.

\begin{remark}[Enrichments on a space versus enrichments on a 1-category]
\label{CAT_TENS_EMBED}

What we are calling the `usual' embedding of the category $\sh V\Cat$ of $\sh V$-enriched categories into the category of $\sh V$-precategories, constructed in \cite{Gepner_Haugseng}, founds every $\sh V$-category on an underlying \emph{space} of objects. In this section, however, we construct functors --- including the Yoneda functor --- into a $\sh V$-category based on a 1-category of objects. The reader may worry that our constructions do not, then, actually end up inside $\sh V\Cat$.

For the sake of this paper, we proceed as follows: any functors constructed using the method of this section should in the end be restricted to the enrichment of the underlying space of objects, where the theory of \cite{Gepner_Haugseng} applies. This replacement preserves the notion of $\sh V$-fully faithful functor, so the $\sh V$-Yoneda lemma continues to hold when $\sh V\PSh(-)$ is considered as a $\sh V$-enriched category based on a space.

\end{remark}

\begin{para}[Adjoint enrichment]
\label{CAT_TENS_ENRICH}

Let $(\sh V,\otimes)$ be a presentable monoidal category, $C$ a $\sh V$-enriched precategory (possibly based on a 1-category), and $D$ a left $\sh V$-module. Hinich defines $\Fun_\sh V(C,D)$ to be the 1-category of $C(-,-)$-modules in $1\Fun(C,D)$ with respect to a certain tensoring of the latter over a monoidal category whose underlying 1-category is $1\Fun(C^\op\times C,\sh V)$ \cite[(6.1.3)]{Hin}. The category $\Fun_\sh V(-,-)$ is contravariant in the $\sh V$-category $C$ and covariant in the left $\sh V$-module $D$ \cite[(6.1.4)]{Hin}.

Suppose that for each $x,y:D$ the functor $D(-\otimes x,y)$ is a representable presheaf on $\sh V$. Then \cite[Prop.~6.3.1]{Hin}) tells us that the endomorphism algebra of $\iden_D$ (again with respect to the action $1\Fun(D^\op\times D,\sh V)\curvearrowright 1\Fun(D, D)$) is representable. We view $D$ as a $\sh V$-enriched precategory by equipping it with this algebra; it is called the \emph{adjoint enrichment}.

Thus, the fibre over  of the forgetful functor $\Fun_\sh V(C,D)\rightarrow 1\Fun(C,D)$ over a fixed $f:1\Fun(C,D)$ is the space of algebra homomorphisms $C(-,-)\rightarrow\End(f)$. This identifies Hinich's $\Fun_\sh V(C,D)$ with $\sh V\Fun(C,D)$, where $D$ carries the adjoint enrichment.
\end{para}

\begin{para}[Hom-tensor adjunction]
\label{CAT_TENS_HOM}

We must check that an $n$-category of $n$-functors obtained by the method of \ref{CAT_TENS_ENRICH} is the same as the one provided by Cartesian closure of $n\Cat$.

We have an external Hom-tensor adjunction for $\sh V$-categories in the form of \cite[Cor.~6.1.8]{Hin}, which states:
\[ \Fun_{\sh V\otimes\sh V^\prime}(C\boxtimes C^\prime,D) \cong \Fun_\sh V(C,\Fun_{\sh V^\prime}(C^\prime,D)) \]
where $C':\sh V^\prime\Cat$ and $D$ is tensored over $\sh V\otimes \sh V'$, and the functor category $\Fun_{\sh V'}(C',D)$ is a $\sh V$-module via its action on $D$, $\sh V\otimes\sh V^\prime$ is computed using the tensor product of presentable categories, and $C\boxtimes C^\prime$ is a $\sh V\otimes\sh V^\prime$-enrichment of $C\times C^\prime$.

When $\sh V=\sh V^\prime$ is symmetric monoidal, and the left and right $\sh V$-module structures on $D$ agree, we can compose the enrichment of $C\times C^\prime$ with the monoidal functor $\sh V\otimes\sh V\rightarrow\sh V$ to obtain an internal Hom-tensor adjunction
\[ \Fun_\sh V(C\otimes C^\prime,D) \cong \Fun_\sh V(C,\Fun_\sh V(C^\prime,D)). \]
By \ref{CAT_TENS_ENRICH}, if $D$ admits the adjoint enrichment over $\sh V\otimes\sh V'$, then the preceding statements also hold with $\Fun_*$ (functors from an enriched category into a module) replaced by $*\Fun$ (enriched functors), where $*\in\{\sh V,\sh V',\sh V\otimes \sh V'\}$.

Specialising to the case $\sh V=(n-1)\Cat$ --- so that $\otimes=\times$ is just Cartesian product --- this shows that the adjoint enrichment of $n\Fun(-,-)$ satisfies the universal property of an exponential.
\end{para}

\begin{para}[Enriched mapping bifunctor]
\label{CAT_TENS_MAP}

Hinich's mapping $\sh V\otimes\sh V$-bifunctor for $C:\sh V\Cat$ is defined, within the framework of \ref{CAT_TENS_ENRICH}, to be the tautological bimodule structure on $C(-,-):1\Fun(C^\op \times C,\sh V)$ over itself (with respect to the $1\Fun(C^\op\times C,\sh V)$-bitensoring on itself) \cite[(6.2.4)]{Hin}. The $\sh V$-enriched Yoneda embedding is obtained by passing this through the Hom-tensor adjunction \ref{CAT_TENS_HOM}. It is $\sh V$-fully faithful \cite[Cor.\ 6.2.7]{Hin}.

In the symmetric monoidal case, we can push this again through $\sh V\otimes\sh V\rightarrow\sh V$ to obtain a $\sh V$-enriched functor, so that, in the special case $\sh V=(n-1)\Cat$ we obtain a formula:
\begin{align*}
\alignhead{Mapping object, $n$-natural}
x:C^\op,\ y:C \quad \return_{n\Cat} \quad C(x,y):(n-1)\Cat 
\end{align*}
which enhances the 1-functorial mapping object given as part of the data of enrichment. The $n$-Yoneda embedding then has the form
\begin{align*}
\alignhead{$n$-Yoneda embedding}
  y:C \quad \return_{n\Cat} \quad C(-,y):(n-1)\PSh(C) 
\end{align*}
where $(n-1)\PSh(C)$ denotes the $n$-category of $n$-functors $C\rightarrow(n-1)\Cat$.

I record here the particular application of this result that we will invoke in \S\ref{BIV_EXT}:
\begin{lemma0}[2-Yoneda lemma]
\label{CAT_TENS_YON}

The 2-Yoneda embedding is 2-fully faithful.
\end{lemma0}
\end{para}

\begin{para}[Action of $n$-functors on mapping objects]
\label{CAT_TENS_YON_NAT}
The $\sh V$-enriched mapping objects are intertwined by $\sh V$-functors $F:C\rightarrow D$. Indeed, $F$ comprises the data of a functor $\Ob(F):\Ob(C)\rightarrow\Ob(D)$ and a homomorphism $C(-,-)\rightarrow\Ob(F)^*D(-,-)$ of algebras in $1\Fun(\Ob(C)\times \Ob(C),\sh V)$. The corresponding map of $C(-,-)$-$C(-,-)$-bimodules gives a $\sh V$-natural transformation
\[ x:C^\op,\ y:C \quad\return_{\sh V\Cat}\quad C(x,y) \underset{\sh V}{\rightarrow} D(Fx,Fy) \]
that enhances the natural transformation of functions of $\Ob(C)\times\Ob(C)$ that comprises the underlying map of $\sh V$-quivers. In particular, for $\sh V=(n-1)\Cat$, 
\[ x:C^\op,\ y:C \quad\return_{n\Cat}\quad C(x,y) \underset{(n-1)\Cat}{\rightarrow} D(Fx,Fy) \]
This transformation is invoked in \ref{BIV_EXT_LOC}. 
\end{para}

\begin{remark}[1-categories as $\Spc$-enriched categories]
\label{CAT_TENS_YON_COMPARE}

For the special case $\sh V=\Spc$, \cite{Rune_rectification} gives us an identification $\Phi:1\Cat\tilde\rightarrow\Spc\Cat$. 
We will use this throughout to regard 1-categories as enriched categories.

In particular, the construction \ref{CAT_TENS_MAP} gives us a canonical mapping bifunctor in $\Spc\Cat(C\times C^\op, \Spc)\cong 1\Fun(C\times C^\op,\Spc)$.
All references to the mapping space bifunctor for 1-categories, the 1-Yoneda embedding, and the twisted arrow category (\S\ref{CAT_EVAL}) should be taken from this construction. (We do not address the comparison of this bifunctor with other approaches, such as \cite[316]{HTT}.)

\end{remark}

\subsection{Bootstrapping $n$-natural isomorphisms}
\label{CAT_BOOT}

Here we prove some results that reduce the task of showing certain $n$-natural transformations are invertible to something that can be checked `pointwise' on spaces of objects.


\begin{lemma}[Extensionality of $n$-natural transformations]
\label{CAT_TENS_EXT}

Let $\eta:F\rightarrow G$ be an $n$-natural transformation of $n$-functors. Suppose that for each $X$ in the domain the induced map $\eta_X:FX\rightarrow GX$ is invertible. Then $\eta$ is an $n$-natural isomorphism.
\end{lemma}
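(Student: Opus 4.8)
The plan is to reduce the statement to a factorization problem by currying $\eta$ into the target. I would regard $\eta$ as an object of the arrow $n$-category $K^{\Delta^1}$, where $K := n\Fun(C,D)$; concretely, the objects of $K^{\Delta^1}=n\Fun(\Delta^1,K)$ are exactly the $1$-morphisms of $K$, i.e.\ the $n$-natural transformations. Using Cartesian closure of $n\Cat$ (the internal Hom--tensor adjunction \ref{CAT_TENS_HOM}) I would rewrite
\[ K^{\Delta^1}=n\Fun(\Delta^1,n\Fun(C,D))\cong n\Fun(\Delta^1\times C,D)\cong n\Fun(C,D^{\Delta^1}), \]
so that $\eta$ corresponds to an $n$-functor $\hat\eta:C\rightarrow D^{\Delta^1}$ whose value on an object $X$ is precisely the component $\eta_X:FX\rightarrow GX$, viewed as an object of the arrow category $D^{\Delta^1}$. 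The hypothesis then says exactly that $\hat\eta$ lands objectwise in the invertible arrows of $D$.

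Next I would introduce the walking equivalence $E$, that is, the localization $q:\Delta^1\rightarrow E$ of the interval at its non-degenerate edge, and invoke the structural fact that $q^*:D^E\rightarrow D^{\Delta^1}$ is fully faithful with essential image the full sub-$n$-category $(D^{\Delta^1})^\simeq\subseteq D^{\Delta^1}$ spanned by those arrows of $D$ that are invertible. (This is the universal property of the localization $q$ combined with the characterization of invertible $1$-cells; it is the one genuinely structural input, and I expect pinning it down carefully in the model-independent language to be the main obstacle.) Granting this, the task is to promote the objectwise containment of $\hat\eta$ in $(D^{\Delta^1})^\simeq$ to an honest factorization. Since $(D^{\Delta^1})^\simeq$ is a \emph{full} sub-$n$-category --- its higher cells are all the cells of $D^{\Delta^1}$ between invertible arrows --- a functor factors through it as soon as it does so on objects; for $n=2$ this is exactly the mapping property of specified subcategories, Lemma \ref{CAT_FURTHER_SPEC}, applied with the specification that is full on the invertible-arrow objects, and the analogous full-subcategory factorization is available for general $n$. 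This yields a lift $\check\eta:C\rightarrow D^E$ with $q^*\check\eta\simeq\hat\eta$.

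Finally I would curry back. The lift $\check\eta:C\rightarrow D^E$ corresponds under \ref{CAT_TENS_HOM} to an $n$-functor $\breve\eta:E\rightarrow n\Fun(C,D)=K$, and by naturality of the currying isomorphisms in the restriction $q^*$ its composite $\breve\eta\circ q:\Delta^1\rightarrow K$ recovers $\eta$. Because the non-degenerate edge of $E$ is invertible and $n$-functors preserve invertible cells, $\eta$ is an invertible $1$-cell of $K$, i.e.\ an $n$-natural isomorphism, as required. The argument is uniform in $n$ and requires no induction: the only real work is the structural identification of $(D^{\Delta^1})^\simeq$ with the image of $D^E$, together with the verification that ``objectwise invertible'' upgrades to a factorization through this full subcategory --- both of which become formal once the characterization of equivalences and the full-subcategory mapping property are in hand.
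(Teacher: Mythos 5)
Your overall architecture (curry to $\hat\eta:C\rightarrow D^{\Delta^1}$, factor through the full subcategory of invertible arrows, uncurry) is sound, and for $n=1$ it is a complete and standard argument. But for general $n$ there is a genuine gap, and it sits exactly where you flagged it: the claim that $q^*:D^E\rightarrow D^{\Delta^1}$ is fully faithful with essential image the invertible arrows. Since $E$ is a contractible groupoid, $n\Fun(E,D)\simeq D$, so the claim splits into two parts: full faithfulness of $d\mapsto\iden_d$ (which is within reach, via the limit description of mapping categories in functor categories), and essential surjectivity onto the invertible arrows, which is the crux. For $(\infty,1)$-categories the latter is Joyal's theorem together with the universal property of Dwyer--Kan localization; but $E$ is built as a localization in $1\Cat$, and that universal property says nothing about $n\Fun(E,D)$ for $n\geq 2$. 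Neither this paper's toolkit (the models of \S\ref{CAT_MODELS}, specification of 2-subcategories \ref{CAT_FURTHER_SPEC}, Hinich's enriched Yoneda \ref{CAT_TENS_MAP}) nor the model-independent literature supplies it for $(\infty,n)$-categories. Worse, the claim essentially contains the lemma: the natural proof that an invertible $f:x\rightarrow y$ is isomorphic to $\iden_x$ in $D^{\Delta^1}$ is to write down the square with components $(\iden_x,f)$ and observe that, its components being invertible, it is invertible in $D^{\Delta^1}$ --- which is precisely Lemma \ref{CAT_TENS_EXT} for the domain $C=\Delta^1$. Avoiding this circularity means constructing coherent inverse data by hand (promoting $f$ to a coherent or adjoint equivalence), and the compatibilities one must then check are 2-cells of $D^{\Delta^1}$ with invertible components that must themselves be shown invertible --- an instance of the lemma one categorical dimension down. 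So at best your reduction hides an induction on $n$ whose inductive step is unproved; as written (``uniform in $n$, no induction'') it is circular.

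For comparison, the paper's proof avoids the walking equivalence entirely by exploiting the enriched framework it has already set up: postcomposing with the $\sh V$-enriched Yoneda embedding, $\sh V=(n-1)\Cat$, which is $\sh V$-fully faithful by \ref{CAT_TENS_MAP}, reduces the statement to transformations of $\sh V$-valued functors; these are modules over a monad on the ordinary functor category $1\Fun(D^\op,\sh V)$, so the forgetful functor is conservative by monadicity, and invertibility in $1\Fun(D^\op,\sh V)$ is detected objectwise by the known $n=1$ case. Every ingredient there is already established, which is exactly what your route lacks; if you want to salvage your approach, the honest path is to prove the $E$-extension property by induction on $n$, using extensionality for $(n-1)$-categories in the inductive step.
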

\begin{proof}
By the $\sh V$-Yoneda lemma it is enough to prove this for presheaves. That is, we must establish that for $D:\sh V\Cat$ the forgetful functor $\sh V\Fun(D^\op,\sh V)\rightarrow 1\Fun(D^\op,\sh V)$ is conservative. But $\sh V$-enriched presheaves on $D$ are nothing but modules in $1\Fun(D^\op,\sh V)$ for the monad induced by the enrichment, so this follows from monadicity.
\end{proof}

\begin{lemma}[Bootstrapping natural isomorphisms]
\label{CAT_BOOT_LEMMA}

Let $k:\N$, $I:k\Cat$,  $F_0,F_1:I\rightrightarrows n\Cat$ two $k$-functors, and $\phi:F_0\rightarrow F_1$ a natural transformation. Let the 1-core of $I$ be powered over $n\Cat$, and let the 1-cores of $F_0$ and $F_1$ be compatible with this powering and the tautological self-powering of $n\Cat$.  

Suppose that for each $i:I$, $\phi_i$ induces an equivalence $\Ob(F_0(i))\rightarrow\Ob(F_1(i))$. Then $\phi$ is a $k$-natural equivalence.\end{lemma}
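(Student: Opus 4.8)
The plan is to deduce the statement from three ingredients, none of which requires an induction on $n$: the extensionality principle \ref{CAT_TENS_EXT}, the tautological identification $C^A=n\Fun(A,C)$ supplied by the self-powering of $n\Cat$, and the Yoneda lemma in the elementary form that a $1$-morphism of $n\Cat$ inducing an equivalence on every corepresentable is itself an equivalence.

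First I would reduce the $k$-natural statement to an objectwise one. By extensionality \ref{CAT_TENS_EXT}, to show that $\phi$ is a $k$-natural equivalence it suffices to show that each component $\phi_i:F_0(i)\rightarrow F_1(i)$ is invertible in $n\Cat$, i.e.\ is an equivalence of $n$-categories. So the real task is to promote the hypothesis that $\Ob(\phi_i)$ is an equivalence of spaces to the assertion that $\phi_i$ is an equivalence.

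The engine for this promotion is the powered structure, and this is the step I expect to be the crux. Since the $1$-core of the data is cotensored over $n\Cat$ and $F_0,F_1,\phi$ preserve cotensors, for every $A:n\Cat$ and every $i:I$ the cotensor $A\pitchfork i$ is again an object of $I$, and the power-preservation isomorphisms identify the component $\phi_{A\pitchfork i}$ with the power $\phi_i^A:F_0(i)^A\rightarrow F_1(i)^A$. Applying the hypothesis at the object $A\pitchfork i$ therefore tells us that $\Ob(\phi_i^A)$ is an equivalence of spaces, and this holds for every $A$ simultaneously. Because $n\Cat$ is powered over itself in the tautological way, $C^A=n\Fun(A,C)$, so $\Ob(F_j(i)^A)=\Ob(n\Fun(A,F_j(i)))$ is precisely the space of $n$-functors $A\rightarrow F_j(i)$. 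Thus $\phi_i$ induces an equivalence on the space of maps out of every $A:n\Cat$. The one point demanding care here is exactly the bookkeeping that $A\pitchfork i$ lies in $I$ and that $F_j$ and $\phi$ genuinely intertwine it with the tautological power $(-)^A$ on the target; this is the entire content of the powering hypothesis, and without it the conclusion is false (for $I$ a point with no nontrivial cotensors the hypothesis degenerates and ``equivalence on objects'' no longer forces an equivalence).

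Finally I would close with Yoneda. A $1$-morphism $f:C\rightarrow D$ in $n\Cat$ for which $\Ob(f^A):\Ob(C^A)\rightarrow\Ob(D^A)$ is an equivalence for all $A$ induces an equivalence on every corepresentable functor $\Ob((-)^A)$; taking $A=D$ produces $g:D\rightarrow C$ with $f\circ g\simeq\iden_D$, and taking $A=C$ and using injectivity on components forces $g\circ f\simeq\iden_C$, so $f$ is an equivalence. Applying this to $f=\phi_i$ shows each $\phi_i$ is an equivalence of $n$-categories, and the reduction of the first paragraph then yields that $\phi$ is a $k$-natural equivalence. Organised this way, the cotensor hypothesis converts the single object-level assumption into the full family of corepresentable equivalences in one stroke, and Yoneda does the rest, bypassing any recursion on the categorical dimension.
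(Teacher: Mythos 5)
Your proposal is correct and follows essentially the same route as the paper: use the powering hypothesis to transport the objectwise assumption to all powers $i^A$, thereby showing each $\phi_i$ induces equivalences on the representable presheaves $\Ob(F_j(i)^{(-)})\cong\Map_{n\Cat}(-,F_j(i))$, conclude by Yoneda that each $\phi_i$ is an $n$-equivalence, and finish with extensionality (Lemma \ref{CAT_TENS_EXT}). The only differences are cosmetic — you invoke extensionality at the start rather than the end, and you spell out the Yoneda step by the explicit section/retraction argument.
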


\begin{remark}
Note that we don't ask for any compatibility between the powering of $I$ and its enrichment.
\end{remark}

\begin{proof}
By compatibility with powering, the essential image of
\[ I\rightarrow 0\PSh(n\Cat),\quad i \return [J\return \Ob(F_0(i^J))]\]
consists of representable presheaves (respresented by $F(i)^J$). The hypothesis plus Yoneda gives us that $\phi$ induces an $n$-equivalence $F_0(i)\rightarrow F_1(i)$ for each $i:I$. Now use extensionality of $k$-natural transformations, Lemma \ref{CAT_TENS_EXT}.
\end{proof}

\begin{para}[Mapping $n$-categories]
\label{CAT_BOOT_EX}

An $n$-category $K:n\Cat$ is said to admit an \emph{$n$-powering} over $(n-1)\Cat$ if the contravariant 1-functor
\[
  K^\op \rightarrow (n-1)\Cat, \quad x \return K(x,y)^I
\]
is representable for all $y:K$ and $I:(n-1)\Cat$. 
Then for $\psi:x\rightarrow y$ a morphism in $K$, the induced (by the $n$-Yoneda embedding) $n$-natural transformation $K(y,-)\rightarrow K(x,-)$ of $n$-functors $K\rightarrow (n-1)\Cat$ is compatible with powering over $(n-1)\Cat$, and we are in the situation of Lemma \ref{CAT_BOOT_LEMMA}.

For example, this setup holds for the case $K=(n-1)\Cat$ via the following chain of isomorphisms --- 1-natural in $X$ --- in $0\PSh(n\Cat)$:
\begin{align*}
n\Fun(-,n\Fun(X,Y^I)) &\cong n\Fun(- \times X, Y^I) \cong n\Fun(- \times X \times I, Y)  \\
&\cong n\Fun(- \times I, n\Fun(X, Y)) \cong n\Fun(-, n\Fun(X, Y)^I). 
\end{align*}
\end{para}

\begin{lemma}[Bootstrapping natural isomorphisms for subrepresentable functors]
\label{CAT_BOOT_SUBREP}

  Suppose $K:n\Cat$ admits an $n$-powering over $(n-1)\Cat$.
  Let $\phi:x\rightarrow y$ be a morphism in $K$, and let $F_1$, resp.~$F_0$ be a $k$-subfunctor ($k\leq n$) of $K(x,-)$, resp.~$K(y,-)$, preserved by precomposition with $\phi$.
  
  If $\phi$ induces an equivalence $\Ob(F_0(z)) \tilde\rightarrow \Ob(F_1(z))$ for all $z:K$, then $\phi^*:F_0\rightarrow F_1$ is a $k$-natural equivalence of $(n-1)\Cat$-valued functors.
  
\end{lemma}
\begin{proof}

Apply Lemma \ref{CAT_BOOT_LEMMA} with $I=K$.
\end{proof}

\begin{remark}

Note that this statement is trivial in the case $F_1=K(x,-)$ and $F_0=K(y,-)$, since then by the 1-Yoneda lemma $x\cong y$ and so the conclusion is immediate.

\end{remark}

\begin{eg}[Categorical epimorphism]

A morphism $\psi:x\rightarrow y$ in $K$ is said to be a \emph{$k$-categorical epimorphism} if $\lie c_{k-1}(K(y,-))\rightarrow\lie c_{k-1}(K(x,-))$ is a monomorphism in $(k-1)\Cat$. Suppose $K$ satisfies the hypotheses of \ref{CAT_BOOT_EX} and that $\psi$ is a 1-categorical epimorphism. Then applying Lemma \ref{CAT_BOOT_LEMMA} to the map from $K(y,-)$ to its image in $K(x,-)$ tells us that $\psi$ is also a $k$-categorical epimorphism. This is essentially the setting of Theorems \ref{UNIV_EXT_THM} and \ref{UNIV_MON_THM}.
\end{eg}

\subsection{Fibrations}
\label{CAT_FIB}

For a \emph{marking} $S\subseteq\Ob(D^{\Delta^1})$ of a 1-category $D$, there is an accompanying theory of fibrations admitting Cartesian lifts of members of $S$. In this section, we discuss the basic objects of this theory and construct free $S$-Cartesian fibrations. Just as the free Cartesian fibration on a functor $E\rightarrow D$ is given by a slice category $D\downarrow(-)$ --- see \cite{GHN} ---  the full subcategory $D\downarrow^\sharp(-)$ \eqref{CAT_FIB_SLICE} thereof spanned by the \emph{marked} arrows defines a free marked $S_D$-Cartesian fibration.
The reader may recognise some nomenclature and notation from \cite[\S3.1]{HTT}.

\begin{defn}[Marking]
\label{CAT_FIB_MARK}
\label{CAT_MARK_DEF}

A \emph{marking} of a 1-category $D$ is a collection of morphisms $S_D\subseteq \pi_0(D^{\Delta^1})$, stable under composition (including containing all identities). Equivalently, we may formulate $S_D$ as a subcategory of $D$ whose inclusion induces an equivalence on the space of objects. In this way, a marking is just a type of 1-functor. 

We abuse the same letter to denote both the class of arrows and the subcategory. Hence, the mildly abusive notation $f\in S_D$ means that $f$ is a morphism that belongs to the subcategory $S$; since $\rm{Ob}(S_D)=\rm{Ob}(D)$, this will not cause confusion. Say the pair $(D,S_D)$ is a \emph{marked 1-category}.

Define $1\Cat^+$ as the full 2-subcategory of $1\Cat^{\Delta^1}$ spanned by the marked 1-categories. The morphisms in $1\Cat^+$ are called \emph{marked functors}. I usually abbreviate an object $(D,S_D)$ of $1\Cat^+$ to just $D$, understanding that the marking will be denoted by an $S$ subscripted with the name of the object.
\end{defn}

\begin{para}[Canonical markings]
\label{CAT_MARK_TRIV}

Every 1-category can be canonically thought of as a marked category in two ways, via the \emph{trivial} marking consisting of only isomorphisms, and the maximal marking with all morphisms marked. These constitute a right and left adjoint, respectively, to the forgetful functor.
\[\xymatrix{  1\Cat \ar@/^/[r]^\sharp \ar@/_/[r]_\flat & 1\Cat^+ \ar[l] }\]
By default, equip all categories with the trivial marking. We only need to invoke the maximal marking in the case $\Delta^1$; the marked category $(\Delta^1,\Delta^1)$ is denoted $\Delta^\sharp$.
\end{para}

\begin{eg}[Restricted and unrestricted arrow category]
\label{CAT_MARK_ARROWS}

Recall that $\Delta^\sharp$ is the 1-simplex category with all morphisms marked. The category $D^{\Delta^\sharp}=1\Fun^+(\Delta^\sharp, D)$ is the full subcategory of $D^{\Delta^1}$ spanned by the elements of $S_D$. It is itself marked by the set of commuting squares in $S_D$.
\end{eg}

\begin{para}[Comma category]
\label{CAT_FIB_SLICE} 

Let $D:1\Cat^+$ and let $p:E\rightarrow D$ be a 1-functor. The \emph{marked comma category} $D\downarrow^\sharp E$ is defined by the pullback
\[\xymatrix{
D\downarrow^\sharp E\ar[d]\ar[r] & D^{\Delta^\sharp} \ar[d]^{\rm{tar}} \\
E\ar[r]^p & D.
}\]
The identity section $D\rightarrow D^{\Delta^\sharp}$ induces a section $E\rightarrow D\downarrow^\sharp E$ that commutes with projection to $D$.

Thus, an object of $D\downarrow^\sharp E$ is the data of an object $e:E$ together with a map $f:x\rightarrow pe$ belonging to $S_D$, and a morphism in $S_D\downarrow_DE$ is a map $e\rightarrow e^\prime$ in $E$ plus a commuting square
\[\xymatrix{
x\ar[d]\ar[r] & x^\prime\ar[d] \\
pe \ar[r] & pe^\prime
}\]
whose top edge does need not belong to $S_D$. The section takes $e:E$ to $(e, \iden:pe\rightarrow pe)$.


When $E$ is contractible, mapping to an object $d: D$, I write also $ D\downarrow d$. 
\end{para}

\begin{remark}[Slicing in \cite{HTT}]
\label{CAT_FIB_SLICE_HTT}
In the case of slicing over a single object, our definitions compare to that of the \emph{overcategory} $ D_{/d}$ of \cite[\S1.2.9]{HTT}. More precisely, the latter is defined as the value on $(D,d)$ of the right adjoint from an adjunction between $\bf{sSet}$ and $\mathrm{pt}\downarrow\bf{sSet}$. By \cite[Props.~1.2.9.3 and~4.2.1.3]{HTT}, both adjoints preserve categorical equivalences and hence descend to an adjunction between the localised $\infty$-categories; these are $1\Cat$ and $\mathrm{pt}\downarrow1\Cat$ respectively (in the latter case because we sliced under a cofibrant object).
\end{remark}

\begin{defn}[Marked Cartesian fibrations]
\label{CAT_FIB_CART_DEF}

Let $D:1\Cat^+$. A functor $p:E\rightarrow D$ is said to be $S_D$-\emph{Cartesian} if for each $f\in S_D$ and object $x:E_{\rm{tar}(f)}$ there is a Cartesian lift of $f$ ending at $x$. A morphism in $1\Cat\downarrow D$ between $S_D$-Cartesian fibrations is an $S_D$-\emph{Cartesian transformation} if it preserves the set of arrows Cartesian over $S_D$. 

The 1-subcategory of $1\Cat\downarrow D$ spanned by the $S_D$-Cartesian fibrations and $S_D$-Cartesian transformations is denoted $S\Cart_D$. By pullback of fibrations, it is a contravariant 1-functor of $D:1\Cat^+$. When $S_D=D$, we write also $1\Cart_D$. 
\end{defn}

\begin{remark}[Cartesian fibrations in \cite{HTT}]
\label{CAT_FIB_CART_HTT}

The definitions of $p$-Cartesian arrow and Cartesian fibration provided in \cite{HTT} invoke fibrancy conditions for the Joyal model structure, and as such they are not homotopy-invariant. We adopt here the homotopy-invariant versions from \cite[\S2]{MG_Cart}. These sets of definitions are equivalent when applied to inner fibrations between quasi-categories; see \cite[Thm.~3.3,~3.4]{MG_Cart} or \cite[Cor.\ 4.1.24]{RV_Yon}.
\end{remark}

\begin{para}[Free fibrations]
\label{CAT_FIB_FREEDEF}

An $S$-Cartesian fibration $F\rightarrow D$ is said to be \emph{$n$-free}, where $n:\{1,2\}$, on a functor $E\rightarrow F$ if for each $S$-Cartesian fibration $G$ the restriction
\[  S\Cart_D(F,G)\rightarrow 1\Fun_D(E,G) \]
is an equivalence of $(n-1)$-categories. (We will see shortly that this implies that $E$ is a subcategory of $F$.) By extensionality, it is equally an equivalence of functors $1\Cart_D\rightarrow(n-1)\Cat$. We will need to establish the existence and naturality of free fibrations over a 1-category for when we argue for the bivariant version in \S\ref{BIV_BICART}.
\end{para}

\begin{lemma}[Recognition of $S_D$-Cartesian transformations]
\label{CAT_FIB_LEMMA}

Let $D:1\Cat^+$, $p:E\rightarrow D$, and $F:S\Cart_D$. Let $\phi:D\downarrow^\sharp E\rightarrow F$ be a functor over $D$. The following are equivalent:
\begin{enumerate}
\item $\phi$ is a $p$-right Kan extension of its restriction to $E$ along the diagonal section $E\rightarrow D\downarrow^\sharp E$;
\item $\phi$ is an $S$-Cartesian transformation.
\end{enumerate}
Moreover, any functor $\psi:E\rightarrow F$ admits a relative left Kan extension to $D\downarrow^\sharp E$ whose restriction to $E$ is $\psi$.
\end{lemma}
\begin{proof}
A $p$-right Kan extension of $\phi|_E$ to $D\downarrow^\sharp E$ takes an object $(d\rightarrow pe)$ to the value on the vertex of a $p$-limit cone
\[\xymatrix{
((d\rightarrow pe)\downarrow E)^\triangleleft \ar[dr]_{p^\triangleleft}\ar[rr]^-{\phi_E} && F \ar[dl] \\
& D 
}\] 
where $p^\triangleleft=p\circ\rm{tar}$ on $(d\rightarrow pe)\downarrow E$ and sends the cone point to $d$. The index category of this $p$-limit may be replaced by its initial object $[(d\rightarrow pe)\rightarrow e]$, in which case the notion of $p$-limit coincides with that of Cartesian lift.

It follows that if $F$ is $S$-Cartesian, the $p$-right Kan extension always exists. The last statement holds because $E\rightarrow D\downarrow^\sharp E$ is fully faithful.
\end{proof}

\begin{remark}[Relative limits and Kan extensions]
The notion of $p$-limit we are using in the proof of Lemma \ref{CAT_FIB_LEMMA} is, like in the special case of $p$-Cartesian morphisms, a homotopy-invariant version of the notion defined in \cite[Def.\ 4.3.1.1]{HTT} which applies in the case $F\rightarrow D$ is (modelled by) an inner fibration of simplicial sets. See \ref{CAT_FIB_CART_HTT}. The same applies to that of $p$-Kan extension, \cite[Def.\ 4.3.2.2]{HTT}.
\end{remark}

\begin{prop}[Comma categories are free fibrations]
\label{CAT_FIB_FREE}

The inclusion $E\rightarrow D\downarrow^\sharp E$ exhibits the marked comma category as a 2-free $S_D$-Cartesian fibration.
\end{prop}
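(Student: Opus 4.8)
The plan is to unwind the definition of $2$-freeness from \eqref{CAT_FIB_FREEDEF}: I must produce, for every $S_D$-Cartesian fibration $G$, an equivalence of $1$-categories
\[ i^*\colon S\Cart_D(D\downarrow^\sharp E,G)\longrightarrow 1\Fun_D(E,G) \]
induced by restriction along the inclusion $i\colon E\hookrightarrow D\downarrow^\sharp E$, and then observe that it is natural in $G$. Two preliminaries are bookkeeping. First, $D\downarrow^\sharp E\rightarrow D$ really is an $S_D$-Cartesian fibration: a marked arrow $f\in S_D$ into the source of an object $(e,g\colon x\rightarrow pe)$ lifts via $\id_e$ and the composite $g\circ f\in S_D$, and this lift is Cartesian by the same argument that identifies slices with free Cartesian fibrations \cite{GHN}. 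Second, by the construction of the $2$-categorical structure in \eqref{CAT_FIB_CART_2CAT}, $S\Cart_D(D\downarrow^\sharp E,G)$ is exactly the full subcategory of $1\Fun_D(D\downarrow^\sharp E,G)$ spanned by the $S$-Cartesian transformations; and $i$ is fully faithful, since a morphism $(e,\id_{pe})\rightarrow(e',\id_{pe'})$ is the same datum as a morphism $e\rightarrow e'$ of $E$.

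The engine is the recognition Lemma \ref{CAT_FIB_LEMMA}. Because $G$ is $S$-Cartesian, the pointwise formula in its proof --- evaluating at $(d\rightarrow pe)$ by the Cartesian lift --- produces a $p$-right Kan extension along $i$, hence a right adjoint $i_*$ to the restriction $i^*\colon 1\Fun_D(D\downarrow^\sharp E,G)\rightarrow 1\Fun_D(E,G)$. Full faithfulness of $i$ makes the counit $i^*i_*\Rightarrow\id$ an equivalence, so $i_*$ is itself fully faithful and exhibits $1\Fun_D(E,G)$ as a reflective subcategory of $1\Fun_D(D\downarrow^\sharp E,G)$. Its essential image consists of exactly those functors that are $p$-right Kan extensions of their restriction to $E$, and by the equivalence (1)$\Leftrightarrow$(2) of Lemma \ref{CAT_FIB_LEMMA} this is precisely the subcategory of $S$-Cartesian transformations, i.e.\ $S\Cart_D(D\downarrow^\sharp E,G)$.

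It then follows formally that $i^*$ and $i_*$ restrict to mutually inverse equivalences between $S\Cart_D(D\downarrow^\sharp E,G)$ and $1\Fun_D(E,G)$: the fully faithful $i_*$ is an equivalence onto its essential image, and the counit together with the (restricted) unit exhibit $i^*$ as its inverse. This is exactly $2$-freeness for the fixed $G$. Since restriction is manifestly natural in $G$ and we have just shown it is an objectwise equivalence, the extensionality principle for $n$-natural transformations (Lemma \ref{CAT_TENS_EXT}) upgrades it to an equivalence of the associated $1$-functors of $G$, which gives the full strength of \eqref{CAT_FIB_FREEDEF}.

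I expect the only genuinely delicate point to be the homotopy-invariant handling of the relative-Kan-extension adjunction --- that the pointwise right Kan extension of Lemma \ref{CAT_FIB_LEMMA} assembles into an honest right adjoint $i_*$ with the stated essential image, cutting out the \emph{same full subcategory} as the $S$-Cartesian transformations rather than merely the same objects up to equivalence. This is precisely what the recognition lemma was arranged to supply, so beyond invoking it the remaining work is the formal adjoint-functor yoga above; the naturality in $G$, and ultimately in $D$, is comparatively routine once extensionality is in hand.
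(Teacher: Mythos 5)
Your proof is correct and follows essentially the same route as the paper's: both rest on Lemma \ref{CAT_FIB_LEMMA} to identify $S$-Cartesian transformations with $p$-right Kan extensions, then obtain a fully faithful right adjoint $i_*$ to restriction whose essential image is exactly $S\Cart_D(D\downarrow^\sharp E,G)$. The only difference is economy: the paper compresses the step you flag as delicate --- assembling the pointwise relative Kan extensions into an honest adjoint $i_*$ --- into a citation of \cite[Prop.~4.3.2.17]{HTT}, whereas you argue it (and the surrounding adjoint-functor yoga) by hand.
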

\begin{proof}
By Lemma \ref{CAT_FIB_LEMMA} and \cite[Prop.~4.3.2.17]{HTT}, the restriction functor $1\Fun_D(D\downarrow^\sharp E,G)\rightarrow 1\Fun_D(E,G)$ has a fully faithful right adjoint $i_*$ whose image is the category of $S_D$-Cartesian transformations. (Compare \cite[Prop.~4.11]{GHN}.)
\end{proof}

\subsection{Grothendieck construction}
\label{CAT_GROT}

The $\infty$-categorical Grothendieck construction is indispensable in higher category theory. However, its actual form as presented in \cite[\S3.2]{HTT} is rather opaque and difficult to use directly. It will be helpful for us to be quite precise about what the Grothendieck construction actually does --- particularly when we come to hypothesise its 2-categorical version in \S\ref{UNIV}.

\begin{para}[Grothendieck integration]
\label{CAT_GROT_INT}

The Grothendieck construction takes the form of a natural equivalence
\begin{align*}
\alignhead{Grothendieck construction --- 1-natural 2-equivalence (\dag)}
 D:1\Cat^\op \quad\underset{1\Cat}{\return} \quad \textstyle\int_D:1\PSh(D)\underset{2\Cat}{\cong}1\Cart_D:\fibre_D
\end{align*}
I also refer to the operator $\int_D$ as \emph{Grothendieck integration} and to $\fibre_DE$ as the \emph{fibre transport functor} of the fibration $E$ (which is supposed to evoke the analogy with parallel transport of connections). Every property of the Grothendieck construction we use is deduced from this sentence. (In fact, its form determines it uniquely, but we won't need to use this fact.)\end{para}

\begin{para}[Epistemological status of Grothendieck integration]
\label{CAT_GROT_STATUS}

A Grothendieck construction having the form 
\begin{align*}
\alignhead{Grothendieck construction --- 1-natural 1-equivalence}
  D:1\Cat^\op \quad\underset{1\Cat}{\return} \quad \textstyle\int:1\PSh(D)\underset{1\Cat}{\cong}1\Cart_D:\fibre
\end{align*}
whose restriction over $D=\rm{pt}$ is the identity functor of $1\Cat$ is established in \cite[Cor.\ A.31]{GHN}. 
For fixed $D:1\Cat$, we can use Hinich's theory of adjoint enrichments outlined in \ref{CAT_TENS_ENRICH} to upgrade this to a 2-categorical version
\begin{align*}
\alignhead{Grothendieck construction --- 0-natural 2-equivalence}
  D:1\Cat^\op \quad\underset{\Spc}{\return} \quad \textstyle\int:1\PSh(D)\underset{2\Cat}{\cong}1\Cart_D:\fibre,
\end{align*}
where $1\Cart$ is considered as a 2-category via its adjoint enrichment. This comes into play in the proof of the bivariant Yoneda lemma. Since we have not established the full functoriality of adjoint enrichments, this does not quite recover the fully fledged formula \ref{CAT_GROT_INT} (whence the (\dag) flag), but it will suffice for our purposes.
\end{para}

\begin{para}[Basic features of Grothendieck integration]
\label{CAT_GROT_PROPS}

In this paper, we will only need to know the specifics of Grothendieck integration through the medium of the following identifications:
\begin{labelitems}
\item[Objects] By base change to a point, for any $x:\rm D$ we obtain an equivalence $(\int F)_x\cong F(x)$ natural in $F:1\PSh D$. That is, the integral does what we expect on fibres.
\item[Morphisms] For any $f:x\rightarrow y$ in $D$, the functor $f^!:Fy\rightarrow Fx$ takes each $e:Fy$ to the source of a Cartesian arrow $f^e:f^!e\rightarrow e$; this can be seen by naturality applied to the unique map of Cartesian fibrations
\[\xymatrix{
\Delta^1\ar@{=}[d] \ar[r] & \textstyle\int_DF\ar[d] \\
\Delta^1\ar[r] & D
}\]
covering $f$ in the base and sending the target in $\Delta^1$ to $e$.

\item[Free fibration] The integral of the functor represented by $x:D$ has the following property:
\begin{align*}
  1\Cart_D\left(\int_D\rm y_Dx, E\right) \quad & 
    \cong \quad 1\PSh(\rm y_Dx,\fibre_DE)& \text{Grot.~construction.} \\
  & \cong \quad (\fibre_DE)_x & \text{Yoneda lemma} \\
  & \cong \quad E_x & \text{By \emph{Objects}.}
\end{align*}
Expanding the definitions, the composite equivalence is simply evaluation on $\iden_x:\int_D\rm y_Dx$. In other words, $\int_D\rm y_Dx$ is a free Cartesian fibration on $\{x\}$.
\end{labelitems}
\end{para}

\subsection{Evaluation map}
\label{CAT_EVAL}

The purpose of this section is to construct a universal evaluation 1-functor
\begin{align*}
\alignhead{Universal evaluation map}
\rm{ev}:\left(\int_{X:1\Cat,\,Y:1\Cat}1\Fun(X,Y)\right) \times_{1\Cat} \left(\int_{X:1\Cat}X\right) \quad \underset{1\Cat}{\longrightarrow}\quad \int_{Y:1\Cat}Y
\end{align*}
which, for fixed $X$, recovers the evaluation map $\rm{ev}_X:1\Fun(X,-)\times X\rightarrow (-)$ associated to the Cartesian closed structure on $1\Cat$.
The subtlety in making this natural in $X$ is that the domain is a product of a Cartesian with a co-Cartesian fibration, so this map cannot quite be realised as a natural transformation of functors into $1\Cat$. The domain is, however, exponentiable in $X$, so we may work in a completed setting in which both factors are bi-Cartesian.

\begin{para}[Universal trace map]
\label{CAT_EVAL_TRACE}

Our first objective is to construct a completed evaluation map 
\[ \rm{tr}_Y:\PSh(X\times X^\op \times Y) \rightarrow \PSh(Y), \] 
natural in $X$ and $Y$, based on the duality between $\PSh(X)$ and $\PSh(X^\op)$ with respect to the presentable tensor product. For fixed $X$ and $Y$, this map is computed as a coend over the $X$ variable, i.e.~by transfer of presheaves along the span
\[\xymatrix{
& \rm{Tw}(X)^\op \times Y \ar[dl]\ar[dr]^{\rm{pr}_Y} \\
X\times X^\op\times Y && Y
}\]
by pulling back then taking a left Kan extension; here $\rm{Tw}(X)^\op:=\int_{(x,y):X^\op\times X}X(x,y)$ is the twisted diagonal (opposite to twisted arrow category). 

Taking presheaves, this span itself fits into a universal family:
\[\xymatrix{
& \int_{X,\,Y:1\Cat}\PSh(\rm{Tw}(X)^\op \times Y) \\
\int_{X,\,Y:1\Cat}\PSh(X\times X^\op\times Y) \ar[ur] && \int_{X,\,Y:1\Cat}\PSh(Y)\ar[ul]^{\int_Y\rm{pr}_Y^{-1}}
}\]
where all terms are both Cartesian and co-Cartesian fibrations over $1\Cat\times 1\Cat$. Since left Kan extension $(\rm{pr}_Y)_!$ defines a left adjoint to $(\int_Y\rm{pr}_Y)^{-1}$ on each fibre, the latter admits a global left adjoint $\int_Y\rm{pr}_{Y!}$. Composing these, we obtain:
\begin{align*}
\alignhead{Universal trace map}
  \widetilde{\rm{ev}}:\int_{X:1\Cat,\,Y:1\Cat}\PSh(X\times X^\op \times Y) \quad\underset{1\Cat}{\longrightarrow}\quad \int_{Y:1\Cat}\PSh(Y).
\end{align*}
\end{para}

\begin{remark}
For fixed $X$ this trace map should be compared to the composition  law
\[ \bf{Corr}(\rm{pt},X) \otimes \bf{Corr}(X,Y) \rightarrow \bf{Corr}(\rm{pt},Y) \]
in the adjoint enrichment over presentable categories of the category $\bf{Corr}$ studied in \cite{AF_fibrations}.
\end{remark}

\begin{para}[Universal evaluation map]
\label{CAT_EVAL_EVAL}

We will recover the evaluation map by restricting the universal trace map \eqref{CAT_EVAL_TRACE} to a category of representable objects, defined as follows:
\begin{itemize}
\item Let $X$, $Y:1\Cat$. Twice currying the composition law $X\times 1\Fun(X,Y)\rightarrow Y \subset \PSh(Y)$ provides us with a 1-functor
\[
  1\Fun(X,Y) \rightarrow \PSh(X^\op\times Y)
\]
which sends a 1-functor $f:X\rightarrow Y$ to the presheaf $Y(-,f-)$. 

\item Taking exterior product similarly defines a functor
\[
  X\times1\Fun(X,Y) \rightarrow \PSh(X\times X^\op\times Y), \quad 
    (x,f) \mapsto X(-,x)\times Y(-,f-).
\]
Note that this is not generally fully faithful --- for example, if $Y=\mathrm{pt}$ then its action on mapping spaces has the form $\rm{diag}:X(x,x')\rightarrow X(x,x')^X$.\footnote{Thanks to an anonymous referee for pointing this out.}
\end{itemize}
We will see that the composite of $\widetilde{\rm{ev}}$ with this functor takes values in representable presheaves on $Y$. More precisely, we will identify $\widetilde{\rm{ev}}(x,f)$ with (the Yoneda image of) $fx$, universally in $f$ and $x$. This shows that our universal evaluation map does indeed specialise, for fixed $X$, to the counit of the Cartesian closed structure on $1\Cat$.

To do this, we first extract a composition law from the $\Spc$-enrichment of $X$ (for more detail on the origin of these formulas, compare \cite[Def.~4.2.4]{Gepner_Haugseng} or \cite[\#4.5]{macpherson2019operad}). Recall that this consists of a map of planar operads $\mathtt{Ass}_{X} \rightarrow (\Spc,\times)$. Binary operations in $\mathtt{Ass}_X$ have the form
\[
  \mathtt{Ass}_{X}((w,a),(b,x);\,(y,z)) = X(y,w)\times X(a,b) \times X(x,z).
\]
and so the enrichment gives us a map
\[
  X(y,w)\times X(a,b) \times X(x,z) \rightarrow \Map(X(w,a)\times X(b,x),\,X(y,z)).
\]
natural in $w,a,b,x,y,z$.

Now, fixing $w=y$ and $x=z$ and integrating it as a contravariant functor of $(w,a,b,x):X^\op\times X \times X^\op\times X$, we find the law:
\begin{align*}  
  w:X^\op,\ x:X,\ [a\rightarrow b]:\rm{Tw}(X)^\op \quad &\return_{1\Cat} \quad X(b,x) \times X(w,a) \rightarrow X(w,x)
\end{align*}
where we used an identification $\rm{Tw}(X)^\op\cong \int_{(a,b):X^\op\times X}X(a,b)$.
Throwing an auxiliary 1-functor into the mix, we further obtain
\begin{align*} 
  f:X\rightarrow Y,\ y:Y^\op,\ x:X,\ [a\rightarrow b]:\rm{Tw}(X)^\op \quad & \return_{1\Cat} \quad X(b,x) \times Y(y,fa) \rightarrow Y(y,fx)
\end{align*}
and hence, passing to the colimit over $\rm{Tw}(X)^\op$,
\[
  f:X\rightarrow Y,\ y:Y^\op,\ x:X \quad\return\quad 
    \colim_{[a\rightarrow b]:\rm{Tw}(X)^\op}X(b,x)\times Y(y,fa) \stackrel{\alpha}{\rightarrow} Y(y,fx)  
\]
    which we will show is an equivalence.

\begin{lemma0}
The map $\alpha$ is an equivalence for any $x:X$, $y:Y$.
\end{lemma0}
\begin{proof}
We must show that for fixed $x,y$, this map exhibits $Y(y,fx)$ as a classifying space (groupoid completion) for $\int{[a\rightarrow b]:\rm{Tw}(X)^\op}X(b,x)\times Y(y,fa)$. An object of this category has the form of a diagram
\[ \xymatrix{
& a\ar[r]\ar@{|->}[d]  & b \ar[r] & x \\
y\ar[r] & fa
}\]
and a morphism is a twisted map of diagrams (going the wrong way on $a$). This reflects onto the full subcategory for which $b\rightarrow x$ is invertible, which in turn coreflects onto the subcategory for which $a\rightarrow b$ is also invertible. This defines a two step deformation retraction of the classifying space onto $Y(y,fx)$.
\end{proof}

\noindent Hence, restricting the [Universal trace map] to $X\times 1\Fun(X,Y)$ provides the evaluation map promised at the top of this section.
\end{para}

\begin{para}[Relative evaluation map]
\label{CAT_EVAL_REL}
Let $D:1\Cat$. We relativise the universal evaluation map over $D$ as follows:
\begin{enumerate}
\item The universal evaluation map is a morphism of co-Cartesian fibrations over $1\Cat$ (projecting on the $Y$ variable); applying the Grothendieck construction, consider it as a natural transformation
\[
  \left(\int_{X:1\Cat}1\Fun(X,-)\right) \times_{1\Cat}\left(\int_{X:1\Cat}X\right) \rightarrow \iden
\]
of endofunctors of $1\Cat$. Precomposing with the 2-cell
\[\xymatrix{  1\Cat\downarrow D \ar@/^/[r]^-{\rm{src}}\ar@/_/[r]_-D \ar@{}[r]|-\Downarrow & 1\Cat }\]
yields a commuting square
\[\xymatrix{
  \left(\int_{X:1\Cat}1\Fun(X,-)\right)\times_{1\Cat}\left(\int_{X:1\Cat}X\right) \ar[d]\ar[r]^-{\rm{ev}}
    &  \rm{src} \ar[d] \\
   \left(\int_{X:1\Cat}1\Fun(X,D)\right)\times_{1\Cat}\left(\int_{X:1\Cat}X\right) \ar[r]^-{\rm{ev}\times\iden}
    & D
}\]
in $1\Fun((1\Cat\downarrow D),1\Cat)$ (with the bottom row consisting of constant functors). Integrating again, this takes the form:
\[\xymatrix{
  \left(\int_{X:1\Cat,\,Y:1\Cat\downarrow D}1\Fun(X,Y)\right)\times_{1\Cat}\left(\int_{X:1\Cat}X\right) \ar[d]\ar[r]^-{\rm{ev}}
    &  \int_{Y:1\Cat\downarrow D}Y \ar[d] \\
   \left(\int_{X:1\Cat}1\Fun(X,D)\right)\times_{1\Cat}\left(\int_{X:1\Cat}X\right)\times(1\Cat\downarrow D) \ar[r]^-{\rm{ev}}
    & D \times (1\Cat\downarrow D)
}\]

\item Meanwhile, the natural inclusion $1\Fun_D(X,Y)\subseteq 1\Fun(X,Y)$ induces, integrating over $Y:1\Cat\downarrow D$, a commuting diagram
  \[\xymatrix{
    \int_{X,\,Y:1\Cat\downarrow D}1\Fun_D(X,Y) \ar[r] \ar[d]_X & 
      \int_{X:1\Cat,\,Y:1\Cat\downarrow D}1\Fun(X,Y) \ar[d] \ar[r] &
      \int_{X,\,Y:1\Cat}1\Fun(X,Y) \ar[d]^X \\
    1\Cat\downarrow D \ar[r] & \int_{X:1\Cat}1\Fun(X,D) \ar[r] & 1\Cat
  }\]
  (the right hand square being a pullback). Hence, composing with i),
  \[\xymatrix{
    \left(\int_{X,\,Y:1\Cat\downarrow D}1\Fun_D(X,Y)\right)\times_{1\Cat}
      \left(\int_{X:1\Cat}X\right) \ar[r] \ar[d]_{(X,\mathrm{pr}_2,Y)} & 
      \int_{Y:1\Cat\downarrow D}Y \ar[d] \\
    (1\Cat\downarrow D)\times_{1\Cat}\left(\int_{X:1\Cat}X\right)\times
      (1\Cat\downarrow D) \ar[r]^-{\rm{ev}\times\iden} & 
      D \times (1\Cat\downarrow D)
  }\]

\end{enumerate}
Finally, precomposing with the base change of the top-left corner of ii) along $\rm{src}:1\Cat\downarrow D\rightarrow 1\Cat$ (inserted at the base of the fibre product), we find a triangle:
\begin{align*}
\alignhead{Universal evaluation map, rel.~$D$} 
\xymatrix{
\left(\int_{X,\,Y:1\Cat\downarrow D}1\Fun_D(X,Y)\right) \times_{1\Cat\downarrow D} \left(\int_{X:1\Cat\downarrow D}X\right) \ar[rr]^-{\rm{ev}}\ar[dr] &&  \int_{Y:1\Cat\downarrow D}Y \ar[dl] \\
  & (1\Cat\downarrow D) \times D
}\end{align*}
which forms a transformation of co-Cartesian fibrations in the $Y:1\Cat\downarrow D$ variable, and restricts for fixed $X$ and $Y$ to the evident relative evaluation map.
\end{para}

\begin{para}[Extended Yoneda evaluation map]
\label{CAT_EVAL_YON}
\label{CAT_EVAL_SCART}

Our application for the universal evaluation map is to constructing an \emph{extended Yoneda} 2-cell which, in the case of Cartesian fibrations has the form
\begin{align*}
\alignhead{Extended Yoneda evaluation map}
x:D^\op,\ F:1\PSh(D) \quad 
  \return_{1\Cat} \quad \phi:1\PSh_D(\rm y_Dx,\,F) \quad 
    \return_{1\Cat} \quad \phi(\iden_x):F(x)
\end{align*}
where $1\PSh_D(-,-)$ means the 1-category of natural transformations of $1\Cat$-valued presheaves on $D$. As with the universal evaluation map, the subtlety requiring special attention is the naturality in $x$. Although this case is not used directly in the construction of the bivariant version, it has the same form and so by way of illustration I will explain it here.

First, by restricting  the universal evaluation map rel.~$D$ \eqref{CAT_EVAL_REL} to the relevant subcategory, we get a version for Cartesian fibrations:
\begin{align*}
\alignhead{Universal evaluation map, $D$-Cart} 
 \xymatrix{
\left(\int_{X,\,Y:\Cart_{D}}\Cart_{D}(X,Y)\right) \times_{\Cart_{D}} \left(\int_{X:\Cart_{D}}X\right) \ar[rr]^-{\rm{ev}}\ar[dr] &&  \int_{Y:\Cart_{D}}Y \ar[dl] \\
  & \Cart_{D} \times D
}\end{align*}
This transformation is co-Cartesian in the first variable and Cartesian in the second.

We will recover a Grothendieck integrated version of the Yoneda evaluation map by restricting the $X$ variable along a functor $\rm{free}:D\rightarrow \Cart_D$ which is defined to be the partner of the Yoneda embedding under Grothendieck integration. By \ref{CAT_GROT_PROPS}, for fixed $x:D$ this functor returns a free Cartesian fibration $D\downarrow x$ on $\{x\}$.

More precisely, the restriction goes along a composite of three maps, the first being the diagonal section $D\rightarrow D^{\Delta^1}$:
\begin{align*}
\int_{x:D,\,E:\Cart_D} \Cart_D(\rm{free}(x),E) \quad &
  \rightarrow \quad \left(\int_{x:D,\,E:\Cart_D}\Cart_D(\rm{free}(x),E)\right) \times_D D^{\Delta^1} \\
  \phi \quad & \return \quad (\phi,\,[\iden_x:x \rightarrow x]).
\intertext{ 
By \cite[\S4.1]{AF_fibrations}, we may identify $\int_{x:D}\rm{free}(x) = D^{\Delta^1}$ with projection on $x$ going to target projection on the right.
}
  & \acong \quad \left(\int_{x:D,\,E:\Cart_D}\Cart_D(\rm{free}(x),E)\right) \times_D \left(\int_{x:D}\rm{free}(x) \right)  \\
  & \return\quad (\phi, x) 
\intertext{
Finally, we compose with restriction in the $x$ variable along $\rm{free}:D\rightarrow \Cart_D$ and apply the universal evaluation map:
}
  & \rightarrow \quad \left(\int_{X,\,E:\Cart_D}\Cart_D(X,E)\right) \times_{\Cart_D} \left(\int_{X:\Cart_D}X\right) \\
  & \stackrel{\rm{ev}}{\rightarrow} \quad \int_{E:1\Cart_D}E.
\end{align*}
All maps are transformations of twisted Cartesian fibrations over $\Cart_D\times D$. Thus, the Grothendieck construction for twisted Cartesian fibrations (see \S\ref{BIV_TWIST} for more detailed comments on this) exchanges the map just constructed for a natural transformation of 1-functors of $D^\op\times 1\PSh(D)$. Inspecting the formulas for each step, we see that it does indeed have the expected behaviour $\phi\mapsto \phi(\iden_x)$ for fixed $x$.
\end{para}

\begin{remark}[Holistic construction of the composition law]

If we only cared about the evaluation map as a map of fibrations in \emph{spaces} over $X,Y$, we could easily bypass Grothendieck integration entirely, and hence issues with co/contravariance, in favour of an holistic expression
\[ 1\Cat^{\Delta^1}\times_{1\Cat}1\Cat^{\Delta^1}\cong 1\Cat^{\Delta^2} \stackrel{\sigma_{02}}{\longrightarrow} 1\Cat^{\Delta^1} \]
of the composition law in $1\Cat$, using the fact that $\Delta^2\cong \Delta^1\sqcup_{\Delta^0}\Delta^1$ in $1\Cat$. The identification of the restriction of this map to the fibre over $\rm{pt},\,X,\,Y$ with the counit of the Cartesian structure is straightforward, at least compared to the argument for Lemma \ref{CAT_EVAL_EVAL}.

With some additional theoretical development, this strategy ought to apply even to the full 1-categorical composition law by identifying $\int_{X,Y}1\Fun(X,Y)$ with $\rm{Lax}(\Delta^1,1\Cat)$, the 1-category of 1-functors $\Delta^1\rightarrow 1\Cat$ and \emph{lax} natural transformations.

One expects that lax natural transformations define a 2-category structure on $2\Cat$, adjoint to the Gray tensor product (and hence distinct from the standard enrichment of $2\Cat$). Provided that the representation of $\Delta^2$ as a pushout remains a 2-colimit in this 2-category, we may recover a universal composition law as
\[  \rm{Lax}(\Delta^1, 1\Cat) \times_{1\Cat} \rm{Lax}(\Delta^1,1\Cat) \cong \rm{Lax}(\Delta^2,1\Cat) \longrightarrow \rm{Lax}(\Delta^1, 1\Cat). \]
Of course, making this work would necessitate a presumably lengthy digression into the properties of the Gray tensor product and the accompanying 2-category of 2-categories.
\end{remark}

\subsection{Adjoint functors and adjunctions in 2-categories}
\label{CAT_ADJ}

We discuss here the comparison between the unit-counit formulation of adjunctions, which is the one that can be internalised to any 2-category, to the traditional formulation in terms of mapping objects which works in $1\Cat$.

\begin{para}[Adjunctions of $1$-categories]
\label{CAT_ADJ_1CAT}

An adjunction between 1-categories is defined in \cite[Def.\ 5.2.2.1]{HTT} as a cospan $C\sqcup D\subset M$ plus a bi-Cartesian fibration $M\rightarrow\Delta^1$.\footnote{Modulo substituting the notion of Cartesian fibration used there with a homotopy invariant version \ref{CAT_FIB_CART_HTT}.} It is clear from this definition that the data of the adjunction is equivalent to that of just one or the other adjoint.

Via \cite[Lemma 4.1]{AF_fibrations}, the adjunction data is equivalent to that of a bimodule $C^\op\times D\rightarrow\bf{Spc}$ satisfying representability conditions. Hence, the same data may be expressed by providing two functors $L:C\rightarrow D$, $R:D\rightarrow C$ together with a natural equivalence of bimodules
\[ C(-,R-)\cong D(L-,-). \]
The process of passing through this equivalence is called passing to \emph{adjuncts}.
\end{para}

\begin{para}[Units and counits]
\label{CAT_ADJ_UNIT}

The notion of a unit transformation inducing an adjunction is introduced in \cite[Def.\ 5.2.2.7]{HTT}. The condition provided there can be related to the classical unit-counit formulation of adjunctions by a standard argument:

\begin{lemma0}Let $L:C\rightarrow D$ and $R:D\rightarrow C$ be two functors.
The condition of a map $e:\iden_C\rightarrow RL$ to be a unit is equivalent to the existence of a counit $\epsilon:LR\rightarrow\iden_D$ with which it satisfies the triangle identities up to homotopy.
\end{lemma0}
\begin{proof}The argument is standard from classical category theory; I repeat it here simply to reaffirm that it is not corrupted by homotopical subtleties.
Suppose that $e$ is a unit, and write $\tilde e$ for the induced bimodule isomorphism. We obtain a map $D(-,-)\rightarrow C(R-,R-)\stackrel{\tilde e}{\cong}D(LR-,-)$ and hence, by the Yoneda lemma, a transformation $\epsilon:LR\rightarrow \iden_D$.

Now, the claim is that the induced bimodule map $\tilde\epsilon:C(-,R-)\rightarrow D(L-,-)$ is homotopy inverse to $\tilde e$ if and only if the triangle identities are satisfied. I'll present one half of the argument: it follows from the commutativity of the diagram
\[\xymatrix{ D(L-,-)\ar@/^2ex/[rrr]^{-\circ\epsilon L}\ar[r]_-R\ar[dr]_{\tilde e} & C(RL-,R-) \ar[r]_-L\ar[d]^{-\circ e} & D(LRL-,LR-) \ar[d]^{-\circ Le} \ar[r]_-{\epsilon\circ-} & D(LRL-,-)\ar[d]^{-\circ Le} \\
& C(-,R-)\ar[r]^-L\ar@/_2ex/[rr]_{\tilde\epsilon} & D(L-,LR-) \ar[r]^-{\epsilon\circ-} & D(L-,-) 
}\]
that it is a homotopy \emph{left} inverse if and only if the left triangle identity $L\rightarrow LRL\rightarrow L$ is satisfied. Here we identified the top row with $-\circ\epsilon L$ using the fact that
\[\xymatrix{LRLx \ar[r]\ar[d]_\epsilon & LRy \ar[d]^\epsilon \\ Lx \ar[r] & y }\]
is commutative for any $Lx\rightarrow y$ by naturality of $\epsilon$. The other half follows the same way, mutatis mutandis.
\end{proof}

\noindent
It follows now from \cite[Prop.\ 5.2.2.8]{HTT} that a functor $C\rightarrow D$ admits an adjoint if and only if its image in the homotopy $(2,2)$-category admits an adjoint in the sense of strict 2-category theory.
\end{para}



\begin{para}[Adjunctions in $2$-categories]

We define \emph{adjunctions} in a $2$ (or higher) category following \cite[Def.\ 12.1.1.4]{GR}; thus, an adjunction in an $(\infty,2)$-category is nothing more than an adjunction in the homotopy $(2,2)$-category. 

 It follows from classical 2-category theory that the data of the adjunction is equivalent to the data of either:
\begin{itemize}\item only the left adjoint,
\item only the right adjoint,
\item both adjoints and the unit,
\item both adjoints and the counit.
\end{itemize}
Other presentations of equivalent data involving higher cells are possible; see \cite{RV_Adj} for a full account. 

As we have already observed \ref{CAT_ADJ_UNIT}, an adjunction in the 2-category $1\Cat$ is the same data as an adjunction between 1-categories in the present sense. 
\end{para}

\begin{para}[Adjunctions and opposite]
\label{CAT_ADJ_OPP}

Adjunctions in a 2-category $K$ are exchanged with adjunctions in its opposites as follows: an adjunction $L:x\rightleftarrows y:R$, $L\dashv R$ in $K$ yields an adjunction $L^{\op_2}:x\leftrightarrows y:R^{\op_2}$, $R^{\op_2}\dashv L^{\op_2}$ in $K^{\op_2}$ and $R^{\op_1}:x\rightleftarrows y:L^{\op_1}$, $R^{\op_1}\dashv L^{\op_1}$ in $K^{\op_1}$. These facts can be deduced from the classical statements in $(2,2)$-category theory (by construction, our $\op_k$ operations extend the classical ones of the same name \ref{CAT_MODELS_UNIQUE}).

This observation is somewhat incidental to the main arguments of the paper: we only employ it in \ref{BIV_FUN_OPP}, which is itself only invoked in the examples \S\ref{EX}.
\end{para}

\begin{remark}[$n$-adjunctions]
\label{CAT_ADJ_nCAT}

The argument for lemma \ref{CAT_ADJ_UNIT} can also be used to show that an adjunction in $n\Cat$ is the same as an $n$-natural isomorphism $C(-,R-)\cong D(L-,-)$ of $n$-functors into $(n-1)\Cat$. We invoke this observation in \ref{EX_MON_nCAT} to construct certain $n$-categorical symmetric monoidal structures.
\end{remark}

\begin{lemma}[Restricting adjunctions to subcategories]
\label{CAT_ADJ_SUBCAT}

  Let $L:C\rightleftarrows D:R$ be an adjunction, and let $C_0\subseteq C$, $D_0\subseteq D$ be subcategories.
  Suppose the following conditions are satisfied:
  \begin{enumerate}
    \item $L$ maps objects of $C_0$ into $D_0$;
    \item $R$ maps objects of $D_0$ into $C_0$;
    \item for each $x:C_0$ and $y:D_0$, the adjunction isomorphism $C(x,Ry)\cong D(Lx,y)$ restricts to an isomorphism between $C_0(x,Ry)$ and $D_0(Lx, y)$.
  \end{enumerate}
  Then there is a unique adjunction $L_0\dashv R_0$ between $C_0$ and $D_0$ making the diagrams
  \[\xymatrix{
    C_0 \ar[r] \ar[d] & D_0 \ar[d] & C_0 \ar[d] & D_0 \ar[d] \ar[l] & C_0(L_0-,-) \ar@{=}[r] \ar[d] & D_0(-,R_0-) \ar[d] \\
    C \ar[r] & D & C & D \ar[l] & C(L-, -) \ar@{=}[r] & D(-,R-)
  }\]
  commutative.
  
\end{lemma}
\begin{proof}

  In light of the hypotheses and the definition of adjunction data \ref{CAT_ADJ_1CAT}, the uniqueness is clear, and for the existence the only criterion left to check is that $L$, resp.~$R$, maps morphisms of $C_0$ into $D_0$, resp.~$D_0$ into $C_0$. This is trivially true for identity morphisms. Now, the adjunction isomorphisms identify the induced maps
  \[
    L:C(x,y) \rightarrow D(Lx,Ly)\cong C(x,RLy) \qquad R:D(u,v) \rightarrow C(Ru,Rv)\cong D(LRu,v)
  \]
  with postcomposition with the unit, resp.~precomposition with the counit; hence, it is equivalent to show that the unit and counit factor through the category of endofunctors of $C_0$, resp.~$D_0$.
This follows by applying the adjunction isomorphism to identity morphisms.  \qedhere

\end{proof}

\section{Bivariance}
\label{BIV}

A complete 2-fold Segal space $\rm{Span}_{D,\bullet}$ of correspondences in $D$ is defined in \cite[\S7.1.2]{GR}. The advantage of their approach to the definition, as mentioned in the introduction, is that it is obviously well-defined, functorial, and commutes with limits. On the other hand, it is not so immediate how we construct functors out of $\rm{Span}_{D,\bullet}$.

In this section we will introduce two other perspectives on correspondences, each with their own advantages and drawbacks:
\begin{itemize}
\item The notion of a \emph{universal bivariant extension} (\S\ref{BIV_UNIV}), which by definition has a universal property and is therefore natural. However, their existence is not straightforward.

\item The notion of a \emph{category of correspondences} (\S\ref{BIV_LOC}). A category of correspondences is easy to recognise, but it is not immediate that they are unique or functorial. 

\end{itemize}
In \S\ref{BIV_EXT}, we provide a method to construct functors out of categories of correspondences. We can thereby deduce that they are unique up to isomorphism.

In \S\ref{UNIV} we will identify these two concepts, at the same time bringing \emph{existence} and \emph{recognition} to universal bivariant extensions and \emph{uniqueness} to correspondences.

\subsection{Base change}
\label{BIV_BC}

We begin with a discussion of markings with base change and the base change, or \emph{Beck-Chevalley}, condition in 2-categories.

\begin{defn}[Base change]
\label{BIV_BC_DEF}

A marking $S_D$ of a 1-category $D$ (Def.~\ref{CAT_FIB_MARK}) is said to \emph{have base change} if pullbacks of elements of $S_D$ along arbitrary maps are representable and belong to $S_D$. Say also that $S_D$ is a \emph{marking with base change}, and that the pair $(D,S_D)$ is a \emph{marked category with base change}.

If $(D_0,S_0)$ and $(D_1,S_1)$ are marked categories with base change, a marked functor $D_0\rightarrow D_1$ is said to \emph{preserve base change} if it preserves pullbacks of members of $S_0$. 

A natural transformation $\psi:F_0\rightarrow F_1$ between marked functors with base change $F_i:D_0\rightarrow D_1$ is \emph{base change exact} if for all maps $d_0\rightarrow d_1$ in $S_0$ the square 
\[\xymatrix{F_0d_0\ar[r]^\psi\ar[d] & F_1d_0\ar[d] \\ F_0d_1\ar[r]^\psi & F_1d_1 }\] is a pullback in $D_1$.

Marked categories with base change form a 2-subcategory $1\Cat^{+r}$ of $1\Cat^+$ with specification
\begin{enumerate}[label=\arabic*), start=0]
\item categories with base change pattern;
\item base change preserving functors;
\item base change exact natural transformations.
\end{enumerate}
\end{defn}

\begin{remark}
\label{BIV_BV_2CAT}

The trivial marking always has base change, and $1\Cat^{+r}$ is closed in $1\Cat^+$ under limits and hence tensoring over $1\Cat$. Using these facts it is possible to show that the adjoint enrichment of $1\Cat^{+r}$ agrees with its inherited structure as a 2-subcategory of $1\Cat^+$.
Since we won't use this fact, I omit the details of this argument.

The condition for a natural transformation to be base change exact emerges, via
\[ 1\Fun\left(\Delta^1,1\Fun^{+r}(X,Y) \right)\cong 1\Fun^{+r}(\Delta^1\times X,Y), \]
as the base change condition for the square
\[\xymatrix{
(0,x)\ar[r]\ar[d] & (1,x)\ar[d] \\
(0,y)\ar[r] & (1,y)
}\]
where $x\rightarrow y$ is in $S_X$.
\end{remark}

\begin{remark}[Collar change]
\label{BIV_BC_COLLAR}

A marking of a 1-category has \emph{collar change} if pushouts of elements of $S_D$ are representable in $S_D$. There are associated notions of marked functor with collar change and collar change exact natural transformation that assemble to form a 2-category $1\Cat^{+\ell}$. (The name `collar change' is inspired by bordism theory.)
\end{remark}

\begin{para}[Conjugate mapping]
\label{BIV_BC_MATE}

Let 
\[\xymatrix{x_{00}\ar@{}[dr]|\Leftarrow\ar[r]^{\bar f_!}\ar[d]_{\bar g_!} & x_{01}\ar[d]^{g_!} \\ x_{10}\ar[r]^{f_!} & x_{11}}\] 
be a lax commuting square in a 2-category $K$ (that is, a map $[1,1]^{\op_2}\rightarrow K$ in the notation of \cite[\S 10.3.4]{GR}), and suppose that both $g_!$ and $\tilde g_!$ admit right adjoints $g^!\!,\,\tilde g^!$. (In what follows, for squares oriented from top-left to bottom-right like this we will always arrange that the \emph{vertical} arrows are the ones that get adjoints.)

The 2-cell $\phi$ that exhibits the commutativity induces a \emph{(right) conjugate mapping} (or, as the Australians call it, a \emph{mate}) via an equivalence
\begin{align*}
&[\text{Beck-Chevalley conjugation}] \\
& K(x_{00},x_{11})(g_!\bar f_!,f_!\bar g_!)\cong  K(x_{10},x_{01})(\bar f_!\bar g^!,g^!f_!)
\end{align*}
represented graphically by the composite
\[\xymatrix{
x_{10}\ar@{=}[d]\ar[r]^{\bar g^!}\ar@{}[dr]|\epsilon & x_{00} \ar@{}[dr]|\Leftarrow\ar[r]^{\bar f_!}\ar[d]_{\bar g_!} & x_{01}\ar@{}[dr]|e \ar[d]^{g_!}\ar@{=}[r] & x_{01}\ar@{=}[d] \\
x_{10}\ar@{=}[r] & x_{10}\ar[r]_{f_!} & x_{11}\ar[r]_{g^!} & x_{01}
}\]
Textually, one way of representing this is as a composition 
\[K(x_{00},x_{11})( g_!\bar f_!, f_!\bar g_!) \cong K(x_{10},x_{11})(g_!\bar f_!\bar g^!,f_!)\cong K(x_{10},x_{01})(\bar f_!\bar g^!,g^!f_!) \]
of adjunct isomorphisms; another goes via $K(x_{00},x_{01})(\bar f_!,g^!f_!\bar g_!)$. (The identification between these two paths is given by associativity of composition.) Here we used the fact that postcomposition with $g_!\dashv g^!$ gives an adjunction on $K(-,x_{01})\rightleftarrows K(-,x_{11})$, and precomposition with $\bar g_!\dashv\bar g^!$ an adjunction on $K(x_{00},-)\rightleftarrows K(x_{10},-)$.
\end{para}

\begin{defn}[Beck-Chevalley condition]
\label{BIV_BC_CONDITION}

A commutative square is said to be \emph{(right) Beck-Chevalley} if its vertical arrows admit right adjoints and the (right) conjugate mapping is an isomorphism. (This condition is called \emph{right adjointability} in \cite[Def.\ 4.7.4.13]{HA}.)

It is said to be left Beck-Chevalley if its image in $K^{\op_2}$ is right Beck-Chevalley.
\end{defn}

\subsection{Bivariant functors}
\label{BIV_FUN}

This section introduces the basic definitions and syntax of bivariant functors, bivariant functors with base change, and the 2-categories they form.

\begin{defn}[Bivariance]
\label{BIV_FUN_DEF}

Let $D:1\Cat^{+r}$, $K:2\Cat$. A functor $H:D\rightarrow K$ is said to be \emph{right bivariant}, or \emph{right $S_D$-bivariant} if it is necessary to make $S_D$ explicit, if for each $f:x\rightarrow y$ in $S$, $f_!:=H(f)$ admits a right adjoint. We denote this adjoint $f^!$.

A right bivariant functor $H$ is said to have \emph{base change} if for each fibre square 
\[\xymatrix{x\times_zy \ar[r]^-{f}\ar[d]_g & x\ar[d]^g \\ y\ar[r]^f & z}\]
in $D$, with $g\in S_D$, the base change map $f_!g^!\rightarrow g^!f_!$ defined in \ref{BIV_BC_MATE} is an equivalence in $K(Hy,Hx)$. (By a continued abuse of notation, the same letter is used to denote a morphism and its pullback.)
If $K$ is not specified, bivariant functors take values in $1\Cat$.

A natural transformation $\Phi:H_1\rightarrow H_2$ is called a \emph{bivariant natural transformation} if it transforms right adjoints of $H_1$ of elements of $S_D$ into right adjoints. More precisely, for each $f:x\rightarrow y$ in $S_D$ the square \[\xymatrix{   
  H_1(x)\ar[r]^{\Phi(x)}\ar[d]_{f_!} &  H_2(x)\ar[d]^{f_!} \\  
  H_1(y)\ar[r]^{\Phi(y)} & \rm H_2(y) 
  }\] 
is a (right) Beck-Chevalley square.
\end{defn}

\begin{para}[Composition]
\label{BIV_FUN_COMP}

Bivariant functors (with base change) have the following naturality properties:
\begin{itemize}\item In $K:2\Cat$:
\begin{itemize}
\item if $G: K_1\rightarrow  K_2$ is a 2-functor and $H:D\rightarrow K_1$ is a bivariant functor (with base change) then $GH$ is bivariant (with base change). Indeed, any functor of 2-categories preserves adjunctions and hence the construction of mates.
\item If $\phi:G_0\rightarrow G_1$ is a 2-natural transformation, then the induced morphism $G_0H\rightarrow G_1H$ is a bivariant natural transformation; the commutativity of the relevant square is just a consequence of naturality.\end{itemize}
\item In $D:1\Cat^{+r}$:
\begin{itemize}\item If $F: D_0\rightarrow D_1$ is a marked functor (with base change), and $H: D_1\rightarrow K$ is bivariant (with base change), then the composite $HF$ is bivariant (with base change).
\item If $\psi:F_0\rightarrow F_1$ is a base change exact natural transformation of marked functors with base change, then $HF_0\rightarrow HF_1$ is a bivariant natural transformation. (Note: there is no version without base change.)
\end{itemize}
\end{itemize}
\end{para}

\begin{para}[The 2-category of bivariant functors]
\label{BIV_FUN_2CAT}

Right bivariant functors $D\rightarrow K$ with base change form a 2-subcategory $\Biv(D,K)$ of $2\Fun(D,K)$ with the following specification (cf.\ \ref{CAT_FURTHER_SPECDEF}):
\begin{enumerate}[label=\arabic*), start=0]
\item $S_D$-bivariant functors with base change;
\item $S_D$-bivariant natural transformations;
\item All 3-transfers.
\end{enumerate}
It is stable under colimits and limits (but as we do not need to use this fact, I do not provide a proof). In the case $ K=1\Cat$, we abbreviate this to $\Biv_D$. 

By the composition properties \ref{BIV_FUN_COMP} and specification in families \ref{CAT_FURTHER_SPEC}, bivariant functors define a sub-bifunctor
\begin{align*}
\alignhead{Bivariant functors, 1-natural}
  D:(1\Cat^{+r})^\op,\  K:2\Cat \quad \underset{1\Cat}{\return}\quad  \Biv(D,K):2\Cat 
\end{align*}
of the 2-functor category construction.\end{para}

\begin{remark}
\label{BIV_FUN_3NAT2CAT}

One expects the a more complete functorial description of $\Biv(-,-)$ to look like:
\begin{align*}
\alignhead{Bivariant functors, 3-natural (\dag)}
  D:(1\Cat^{+r})^\op,\  K:2\Cat \quad \underset{3\Cat}{\return}\quad  \Biv(D,K):2\Cat 
\end{align*}
This would be cut out as a 3-subfunctor of the mapping 2-category of $2\Cat$ (\ref{CAT_TENS_YON}). However, we haven't established a way to specify 2-subcategories in families indexed by a 3-category --- we'd need a 3-categorical Grothendieck construction for that.
\end{remark}

\begin{para}[Opposite notions]
\label{BIV_OP}
\label{BIV_FUN_OPP}

Our definitions of bivariant functor and the base change property each involved a choice of parity. The dual notions for a functor $H:D\rightarrow K$ are:
\begin{itemize}
\item $H$ is \emph{left-bivariant} if $H(f)$ has a \emph{left} adjoint for all $f\in S$;
\item (when $(D,S_D)$ is a marked category with collar change as in \ref{BIV_BC_COLLAR}) $H$ \emph{has collar change} if it takes pushout squares to (left or right, depending on whether it is left or right bivariant) Beck-Chevalley squares.
\end{itemize} 
Denote the categories of left, resp.~right bivariant functors by $\Biv^\mp$, resp.~$\Biv^\pm$ where, in the former case, we of course ask that natural transformations preserve the \emph{left} adjoints. The base, resp.~collar change properties are indicated with a further superscript $r$, resp.~$\ell$. Elsewhere in the paper, $\Biv$ is a shorthand for $\Biv^{\pm r}$.

By \ref{CAT_ADJ_OPP}, opposites intertwine the various notions in the following way:
\begin{enumerate}
\item[$\op_1$] $\Biv^\pm(D,K)=\Biv^\mp(D^{\op_1},K^{\op_1})^{\op_1}$ and $\Biv^{\pm r}(D,K)=\Biv^{\mp\ell}(D^{\op_1},K^{\op_1})^{\op_1}$ since passing to $\op_1$ exchanges left with right adjoints and pullback squares with pushout squares.
\item[$\op_2$] $\Biv^\pm(D,K)=\Biv^\mp(D,K^{\op_2})^{\op_2}$ and $\Biv^{\pm r}(D, K)=\Biv^{\mp r}(D,K^{\op_2})^{\op_2}$ because $\op_2$ exchanges left and right adjoints.
\end{enumerate}
\renewcommand{\arraystretch}{1.5}
This behaviour is summarised in the following table: 
\begin{center}
	\begin{tabular}{c|c|c|} & - & $\op_1$ \\ \hline
		- & $\Biv^{\pm r}$ & $\Biv^{\mp\ell}$ \\ \hline
		 $\op_2$ & $\Biv^{\mp r}$ & $\Biv^{\pm\ell}$ \\\hline 
	\end{tabular}
\end{center}
\vskip1ex
The example $D(-,\Z)$ from the introduction and many of the fundamental examples in \S\ref{EX_COEF} are $1\Cat^{\op_1}$-valued left bivariant functors with base change.
\end{para}

\begin{para}[Bivariant functors of products]
\label{BIV_FUN_PROD}
Let $D,\,E:1\Cat^+$. A functor $F$ with domain $D\times E$ is bivariant if and only if its restriction $F_d$ to each slice $\{d\}\times E$ and $D\times\{e\}$ is bivariant.

Now let $D,\,E:1\Cat^{+r}$. Then $D\times E$ has base change, and the condition for $F$ to have base change may be rephrased in terms of the following criteria:
\begin{enumerate}
\item for each $d:D$, $F_d$ has base change as a functor of $E$;
\item for each arrow $d\rightarrow d'$ in $D$, the attendant natural transformation $F_d\rightarrow F_{d'}$ is bivariant.
\end{enumerate}
together with similar conditions indexed over objects and morphisms of $E$. We meet a generalisation of this example in \S\ref{UNIV_CART}.
\end{para}

\begin{para}[Syntax]
\label{BIV_SYNTAX}

We expand the language of \ref{CAT_TYPE} to declare a functor as right (resp.~left) bivariant in a variable $\beta:D$ by postfixing $(\pm)$ (resp.~$(\mp)$), and indicating base (resp.~collar) change by further postfixing an $r$ (resp.~$\ell$). The symbol $(\pm)$ may only be postfixed to an object of $1\Cat^{+}$, $(\pm,\,r)$ to an object of $1\Cat^{+r}$, and so on. By the composition properties, any natural formula in $H:\Biv(D,K)$ is bivariant in whichever explicit variables are passed to $H$.

In formulas with multiple factors as input, applying one of these postfixes to a single factor signifies that the functor is bivariant (resp.~with base or collar change) with respect to a marking restricted to slices of that factor. In particular, the notations
\[  d:D\ (*),\ e:E\ (*) \qquad \Leftrightarrow \qquad (d,e):D\times E\ (*) \]
are equivalent, where $(*)$ stands for the same symbol --- one of $(\pm)$, $(\pm,\,r)$, $(\mp)$, and so on --- in each place. There is no analogue of the right-hand side when the postfixes of $D$ and $E$ differ. The left-hand side provides a convenient notation in this case.

We have the following currying rule:
\begin{align*}
\alignhead{Bivariant currying}
  c:C\ (*),\ d:D\ (**) \quad 
    &\return\quad	e:E \\ 
  \Rightarrow \hspace{6em} d:D\ (**) \quad
    &\return\quad c:C\ (*) \quad\return\quad e:E
\end{align*}
where $(*)$ and $(**)$ are possibly different postfixes. This reflects a hom-tensor equivalence.
\end{para}

\begin{eg}[Bivariant evaluation map]
\label{BIV_FUN_EV}

Restricting the evaluation 2-functor $2\Fun(D,K)\times D\rightarrow K$ from the Cartesian closed structure of $2\Cat$ to bivariant functors, gives a formula
\begin{align*} 
\alignhead{Bivariant evaluation 2-functor}
  H:\Biv(D,K),\ \beta:D\ (\pm,\, r) \quad \underset{2\Cat}{\return}  \quad H\beta:K
\end{align*}
in which $(\pm,\,r)$ signifies bivariance with respect to the union of the markings $\{H\}\times S_D\subseteq\Biv(D,K)\times D$ for $H:\Biv(D,K)$. After \ref{BIV_FUN_PROD}, bivariance can be checked on slices, but base change entails an additional condition indexed over natural transformations $H\rightarrow H'$.
\end{eg}

\begin{eg}[Trivial markings]
\label{BIV_FUN_TRIV}

Let $D$ be a 1-category considered with the trivial marking. This marking has both base and collar change. Every functor with domain $D$ is left/right bivariant with base/collar change.
\end{eg}

\begin{eg}[Bivariant functors into a 1-category]
A functor from a marked category into a 1-category is bivariant if and only if it maps all marked arrows to isomorphisms.
\end{eg}

\subsection{Universal bivariant extensions}
\label{BIV_UNIV}

We introduce here the notion of a universal bivariant extension and exhibit its behaviour in families. The family version of the universal property is needed for the monoidal version of the universal property of correspondences \S\ref{UNIV_MON}.

\begin{defn}[Families of bivariant functors]
\label{BIV_UNIV_FAM}

If $K, K^\prime:I\rightarrow 2\Cat$ are 1-functors, then we will write $2\Fun_{i:I}(K_i,K_i^\prime)$ for the 2-category of 2-natural transformations $K\rightarrow K^\prime$. 

If $D:I \rightarrow 1\Cat^{+r}$ is a 1-functor, write $\Biv_{i:I}(D_i,K_i^\prime)$ for the subcategory of $2\Fun_{i:I}(D_i,K_i^\prime)$ whose objects are, for each $i$, bivariant functors of $D_i$ and whose maps are, for each $i$, bivariant natural transformations. In other words, it is defined as a pullback:
\[\xymatrix{
\Biv_{i:I}(D_i,K^\prime_i) \ar[r]\ar[d] & \prod_{i:\Ob(I)}\Biv(D_i, K^\prime_i)\ar[d] \\ 
1\Fun_{i:I}(D_i,K_i^\prime) \ar[r] & \prod_{i:\Ob(I)}1\Fun(D_i, K^\prime_i)
}\]
Call the objects of $\Biv_{i:I}(D_i, K^\prime_i)$ \emph{$I$-indexed families} of bivariant functors $D_i\rightarrow K_i$.
\end{defn}

\begin{defn}[Universality]
\label{BIV_UNIV_DEF}

A family of bivariant functors $H:\Biv_{i:I}(D_i,K_i)$ is said to be an \emph{$n$-universal bivariant extension}, where $n:\{1,2,3\}$, if restriction along $H$ induces an equivalence of $(n-1)$-categories $2\Fun_{i:I}(K_i,-)\tilde\rightarrow\Biv_{i:I}(D_i,-)$.
\end{defn}

\begin{para}[1-category of bivariant extensions]
\label{BIV_UNIV_1CAT}

Write $\bf{Biv}$ for the full 1-subcategory of the pullback
\[\xymatrix{
	\bf{Biv} \ar@{}[r]|-{\subset} &	
		1\Cat^{+r}\times_{2\Cat}2\Cat^{\Delta^1}\ar[r]\ar[d] & 	
		2\Cat^{\Delta^1}\ar[d]^{\rm{src}} \\
 	&
 		1\Cat^{+r}\ar[r]^-{\text{forget}}_-{\text{marking}} & 2\Cat
}\]
spanned by the right bivariant functors with base change. Thus the objects of $\bf{Biv}$ are tuples $(D,K,H)$ where $D:1\Cat^{+r}$, $K:2\Cat$ and $H:D\rightarrow K$ is a right bivariant functor with base change, and the morphisms are commuting squares. 

The projection to $1\Cat^{+r}\times 2\Cat$ is a bifibration, in fact, a sub-bifibration of the pullback of the unit bifibration of $2\Cat$. By \cite[\S4]{AF_fibrations}, the latter integrates the 2-functor space $\Ob(2\Fun(-,-))$, so via this embedding, $\bf{Biv}$ integrates $\Ob(\Biv(-,-)):(1\Cat^{+r})^\op\times 2\Cat\rightarrow\Spc$.

Write $\bf{Biv}_D$ for the 1-category of \emph{bivariant extensions of $D$}, that is, the fibre of $\rm{src}:\bf{Biv}\rightarrow 1\Cat^{+r}$ over $D$. Equivalently, it is the integral of the covariant 1-functor $K\return \rm{Ob}(\Biv(D,K))$. This construction is contravariant in $D$, so if $D:I\rightarrow 1\Cat^{+r}$ is a 1-functor we may write $\int_{i:I}\bf{Biv}_{D_i}= I\times_{1\Cat^{+r}}\bf{Biv}$ and $\prod_{i:I}\bf{Biv}_{D_i}=1\Fun_I(I,\int_{i:I}\bf{Biv}_{D_i})$.
\end{para}

\begin{lemma}[Bootstrapping universality]
\label{BIV_UNIV_BOOT}

Let $D:I\rightarrow 1\Cat^{+r}$ be a 1-functor, and suppose that for each $i:I$ $D_i$ admits a 1-universal bivariant extension. The maps in the left-hand column
\[\xymatrix{
  \{3\text{-universal bivariant extensions of }D_i\} \ar[d] \ar@{}[r]|-\subset & \prod_{i:I}\bf{Biv}_{D_i} \ar@{=}[d]
\\
  \{1\text{-universal bivariant extensions of }D\} \ar[d] \ar@{}[r]|-\subset & \prod_{i:I}\bf{Biv}_{D_i} \ar[d]
\\
  \prod_{i:\Ob(I)}\{1\text{-universal bivariant extensions of }D_i\} \ar@{}[r]|-\subset & \prod_{i:\Ob(I)}\bf{Biv}_{D_i}
}\]
are equivalences of spaces (hence all contractible). In particular, any 1-universal bivariant extension is 3-universal.
\end{lemma}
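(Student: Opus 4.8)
The plan is to recognise universality as \emph{initiality} and then exploit the rigidity of initial objects. For a single $D:1\Cat^{+r}$ I would first note that a bivariant functor $H\colon D\to K$ is a $1$-universal bivariant extension exactly when $(K,H)$ is an initial object of $\bf{Biv}_D$. Indeed, by \ref{BIV_UNIV_1CAT} the category $\bf{Biv}_D$ integrates $K\mapsto\Ob(\Biv(D,K))$, so for any $(L,G)$ the mapping space $\bf{Biv}_D\big((K,H),(L,G)\big)$ is the fibre over $G$ of the restriction map $\Ob(2\Fun(K,L))\to\Ob(\Biv(D,L))$; this restriction is an equivalence of spaces for every $L$ precisely when every such fibre is contractible, i.e.\ when $(K,H)$ is initial. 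Since the initial objects of a category form an empty or contractible space, the hypothesis that each $D_i$ admits a $1$-universal extension makes each space of such extensions contractible, and the bottom-left vertex $\prod_{i:\Ob(I)}\{1\text{-universal extensions of }D_i\}$ is therefore a product of contractible spaces, hence contractible.

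For the lower vertical arrow I would put this in families. Restriction along a marked base-change functor preserves bivariance with base change (\ref{BIV_FUN_COMP}), so $D\mapsto\bf{Biv}_D$ is contravariant and classifies a Cartesian fibration $\int_{i:I}\bf{Biv}_{D_i}\to I$ whose category of sections is $\prod_{i:I}\bf{Biv}_{D_i}$ (\ref{BIV_UNIV_1CAT}). Unwinding the Cartesian transition, a section is precisely a family $(K_i,H_i)$ equipped, for each $f\colon i\to j$, with a comparison $K_i\to K_j$ intertwining $H_i$ and $H_jD_f$ --- that is, a $1$-universal bivariant extension of $D$ in the sense of \ref{BIV_UNIV_DEF}, which is initial in the section category iff the restriction $2\Fun_{i:I}(K_i,-)\to\Biv_{i:I}(D_i,-)$ is an equivalence of spaces. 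The key observation is that a section is initial iff it is objectwise initial: when each $(K_i,H_i)$ is initial in $\bf{Biv}_{D_i}$, the transition datum over each $f$ is the \emph{unique} map from the initial object $(K_i,H_i)$ to the restriction $(K_j,H_jD_f)$, so the whole section --- and all its coherences --- is determined up to contractible choice, and the resulting section is initial because its mapping spaces to any other section are computed as sections of a fibration with contractible fibres. Conversely an initial section is equivalent to this one, hence objectwise initial. This identifies the middle-left and bottom-left vertices.

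For the upper vertical arrow I would bootstrap $1$-universality to $3$-universality with Lemma \ref{CAT_BOOT_LEMMA}. Regard restriction along a fixed family $H$ as a natural transformation $\phi\colon 2\Fun_{i:I}(K_i,-)\to\Biv_{i:I}(D_i,-)$ of functors of the target family $L$, both valued in $2\Cat$. Both are powered over $2\Cat$ in $L$: the functor-category side by the computation of \ref{CAT_BOOT_EX}, and the bivariant side because $\Biv_{i:I}(D_i,-)$ is carved out by conditions stable under the limits that compute powers. By definition, $1$-universality of $H$ says exactly that $\phi_L$ is an equivalence on objects for every $L$ (including every power $L^J$). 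Lemma \ref{CAT_BOOT_LEMMA} then forces $\phi$ to be a natural equivalence, so each $\phi_L$ is an equivalence of $2$-categories; this is $3$-universality. Hence the subspaces of $\prod_{i:I}\bf{Biv}_{D_i}$ of $1$- and $3$-universal families coincide, which yields the upper equivalence and the final assertion that any $1$-universal extension is $3$-universal.

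The step I expect to be the main obstacle is the middle equivalence, specifically the claim that initial sections of the Cartesian fibration are detected fibrewise. The content is purely a matter of $\infty$-categorical coherence --- that uniqueness of maps out of fibrewise-initial objects supplies, automatically and coherently, all higher data of the section --- and I would discharge it by exhibiting the evaluation-at-objects map $\{\text{initial sections}\}\to\prod_{i:\Ob(I)}\{\text{initial objects of }\bf{Biv}_{D_i}\}$ as a trivial fibration, each fibre being a limit of contractible mapping spaces. The bootstrapping and the contractibility of the bottom vertex are then comparatively routine.
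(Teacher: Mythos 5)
Your proposal follows the same skeleton as the paper's proof: identify $1$-universal extensions with initial objects of $\bf{Biv}_{D_i}$ via the integration description of \ref{BIV_UNIV_1CAT}, treat the family case through the section category $\prod_{i:I}\bf{Biv}_{D_i}$ of the Cartesian fibration $\int_{i:I}\bf{Biv}_{D_i}\rightarrow I$, and bootstrap from $1$- to $3$-universality with Lemma \ref{CAT_BOOT_LEMMA} and the powering of Example \ref{CAT_BOOT_EX}. The first and last steps are essentially identical to the paper's (your insistence that the powering be over $2\Cat$ is, if anything, the more careful reading of what Lemma \ref{CAT_BOOT_LEMMA} demands when the functors take values in $2\Cat$).

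The gap is in the middle step, and it sits exactly where you flagged it. The claim that uniqueness of maps out of fibrewise-initial objects ``automatically and coherently'' supplies all higher data of the section is the statement to be proven, not an observation; and your proposed discharge --- that evaluation-at-objects is a trivial fibration whose fibres are ``limits of contractible mapping spaces'' --- presupposes a decomposition of the space of sections (over simplices of $I$, or a twisted-arrow-type limit) that you never construct, and carried out naively it becomes the cell-by-cell coherence chase that the paper's homotopy-invariant framework deliberately avoids. The paper dispatches this with a standard adjunction device requiring no coherence management at all: for any Cartesian fibration $p:X\rightarrow I$ whose fibres have initial objects $\emptyset_i$, the map $\Map_X(\emptyset_i,x)\rightarrow\Map_I(i,px)$ has contractible fibres (the fibre over $f$ is $\Map_{X_i}(\emptyset_i,f^!x)$, contractible by initiality), so the $\emptyset_i$ assemble into a fully faithful left adjoint $L:I\rightarrow X$ of $p$, with no preservation hypotheses on transition functors; in particular the full subcategory of fibrewise initial objects projects by an equivalence to $I$. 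Postcomposition $L\circ-$ is then left adjoint to $p\circ-$ on functor categories, and its value on $\iden_I$ is an initial object of the fibre of $1\Fun(I,\int_{i:I}\bf{Biv}_{D_i})\rightarrow 1\Fun(I,I)$ over $\iden_I$, which is precisely the section category. Substituting this argument for your hand-rolled contractibility claim closes the gap and recovers the paper's proof.
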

\begin{proof}
The argument for 1-universality is typical. I include the details to illustrate the fact that it fails conspicuously for the 2- and 3-universal versions.

A 1-universal bivariant extension of $D_i$ is an initial object of $\bf{Biv}_{D_i}$. When this exists for each $i:I$, the full subcategory of $\bf{Biv}_D$ spanned by the 1-universal extensions for each $i:I$ projects isomorphically down to $I$. The inverse is a fully faithful left adjoint to the projection $\bf{Biv}_D\rightarrow I$.
 
Now consider the pullback square
\[\xymatrix{
 1\Fun(I, \bf{Biv}_D) \ar[r]\ar[d] 
   & 1\Fun(I,\bf{Biv}) \ar[d] \\
 1\Fun(I,I) \ar[r]  & 1\Fun(I, 1\Cat^{+r}). 
}\]
By definition, a 1-universal extension of the family is an initial object of the fibre over $D$ of the right-hand vertical arrow. Since $D$ lifts to $\iden_I$ on the bottom right, it is enough to find an initial object here. This is given by postcomposing with the left adjoint.

For $3$-universality, we now rely on bootstrapping the 1-universal statement by applying Lemma \ref{CAT_BOOT_SUBREP} to the 1-natural transformation $2\Fun(K,-)\rightarrow \Biv(D_i,-)$ of functors of $2\Cat$ (considered as 2-powered over itself).
\end{proof}

\begin{remark}[$n$-naturality of objects satisfying $n$-universal properties]
\label{BIV_UNIV_RMK}
Intuitively, one might expect to be able to assemble universal bivariant extensions into families indexed by a 2-category in the manner of the proof of Lemma \ref{BIV_UNIV_BOOT}. However, $\rm{Ob}(\Biv(D,-))$, hence also $\bf{Biv}_D$,2 is not a 2-functor of $1\Cat^{+r}$, so we would have to use a different fibration. We might try the integral of the 1-category-valued 2-functor $\Biv(D,-)$; however, its integral over $2\Cat$ is the \emph{lax} slice under $D$, in which a 2-universal extension  is \emph{not} initial. Evidently, some new ideas are needed to make this type of construction work in $n$-category theory for $n>1$.
\end{remark}

\begin{eg}[Trivial example]
\label{BIV_UNIV_TRIV}

As observed in Example \ref{BIV_FUN_TRIV}, if $D$ is trivially marked then every functor is bivariant; hence $D$ is its own universal bivariant extension.
\end{eg}

\subsection{Bi-Cartesian fibrations}
\label{BIV_BICART}

For bivariant functors valued in $1\Cat$, there is a Grothendieck construction generalising \S\ref{CAT_GROT}. When we moreover impose base change, there is a convenient theory of free fibrations, after \S\ref{CAT_FIB}. Aspects of this appear in \cite[\S4.7.4]{HA}. In this section we review this theory and show that the free Cartesian fibration on a co-Cartesian fibration is free bi-Cartesian with base change.

\begin{para}[Interplay between mates and Cartesian lifts]
\label{BIV_BICART_MATE}

Let $\phi:E\to E'$ be a co-Cartesian transformation of co-Cartesian fibrations over $\Delta^1$. Writing $f:0\to 1$ for the non-trivial morphism of $\Delta^1$, these data are captured via the functors they classify in the form of a commuting square
\[\xymatrix{
  E_0 \ar[r]^\phi \ar[d]_{f_!} & E'_0\ar[d]^{f_!} \\
  E_1 \ar[r]^\phi & E'_1
}\]
in $1\Cat$. Now let $z:E_1$ and suppose that $f$ admits Cartesian lifts ending at $z$ and $\phi z$. We would like a way to characterise the associated mate transformation $\phi f^!z\rightarrow f^!\phi z$ \eqref{BIV_BC_MATE} in terms of these lifts.
\begin{lemma0}
There is a commuting triangle
\[\xymatrix{
  \phi f^! z \ar[r]^{\rm{mate}} \ar[dr]_{\phi(f^z)} &
    f^!\phi z \ar[d]^{f^{\phi z}} &
      (\in E'_0) \\
  & \phi z &
      (\in E'_1)
}\]
in $E'$.
\end{lemma0}
\begin{proof}
Recall that the conjugate 2-cell for $f_!,\phi$ is obtained by chasing the boundary of the diagram:
\[\xymatrix{
  E_1 \ar[r]^{f^!} \ar@{=}[dr] &
    E_0 \ar[r]^\phi \ar[d]^{f_!} \ar@{}[dl]|(.35)\Leftarrow & 
      E'_0 \ar[d]_{f_!} \ar@{=}[dr] &
        \ar@{}[dl]|(.65)\Leftarrow \\
  & E_1\ar[r]^\phi	& 
      E'_1 \ar[r]_{f^!} &
        E'_0
}\]
whereby its action $\phi f^!z \to f^!\phi z$ on $z:E_1$ comes from following the top row of
\[\xymatrix{
  \phi f^!z \ar[dr]_{\text{(ii)}} \ar[r]^-{\text{(ii)}} \ar[drr]^(.7){\text{(iii)}} &
    f^!f_!\phi f^!z \ar@{=}[r]^-{\text{(i)}} \ar[d]^(.3){\text{(i)}}|(.47)\hole &
      f^!\phi f_!f^!z \ar[r]^-{\text{(i)}} \ar[d]^{\text{(i)}} & 
        f^!\phi z \ar[d]^{\text{(i)}} &
          (\in E'_x) \\
  & f_!\phi f^!z \ar@{=}[r]_-{\text{(iv)}} &
      \phi f_!f^!z \ar[r]_-{\text{(v)}} &
        \phi z &
          (\in E'_y)
}\]
constructed as follows:
\begin{enumerate}
\item All of the vertical arrows are Cartesian over $f$. The second and third arrows on the top row are obtained by pulling back the bottom row along $f^!$, whence commutativity.
\item The first horizontal arrow is obtained by factorising the co-Cartesian arrow over $f$ starting at $\phi f^!z$ through the Cartesian arrow ending at $f_!\phi f^!z$. This gives commutativity of the left-most triangle.
\item The second diagonal arrow $\phi(f_{f^!z}):\phi f^!z\rightarrow \phi f_!f^!z$ is co-Cartesian by the hypothesis that $\phi$ is a co-Cartesian transformation. This gives commutativity of the skewed triangle (since co-Cartesian arrows with fixed source are unique) and hence with the rest of the diagram.
\item The identifications in the middle come from the commutativity of the input square.
\item The lower-right horizontal arrow is obtained by applying $\phi$ to the counit map $f_!f^!z\rightarrow z$, which in turn is obtained by factorising the Cartesian arrow $f^z:f^!z\rightarrow z$ through the co-Cartesian arrow $f_{f^!z}:f^!z\rightarrow f_!f^!z$. 
  
In particular, the composite of the entire lower route equals $\phi(f^z)$. \qedhere
\end{enumerate}
\end{proof}
\end{para}

\begin{remark}

  The constructions i)-v) in the proof of Lemma \ref{BIV_BICART_MATE}, and hence the triangle in the statement, can be made natural in $z$. However, since we won't use this, I omit the details.

\end{remark}

\begin{defn}[Bi-Cartesian fibrations]
\label{BIV_BICART_DEF}

Let $(D , S_D):1\Cat^{+}$. A 1-functor $E\rightarrow D$ is said to be a \emph{bi-Cartesian fibration} if it is a $D$-co-Cartesian fibration and an $S_D$-Cartesian fibration. 
A functor between bi-Cartesian fibrations over $(D,S_D)$ is said to be a \emph{bi-Cartesian transformation} if it is a $D$-co-Cartesian transformation and an $S_D$-Cartesian transformation. We define $\biCart_{D \sep S_D}:=1\Cart_{D \sep S_D}\cap 1\coCart_{D \sep D}$ as a 1-subcategory of $1\Cat\downarrow D$. It inherits a mapping 1-category bifunctor
\[
  \biCart_{D \sep S_D}(-,-):\biCart_{D \sep S_D}^\op\times \biCart_{D \sep S_D} \rightarrow 1\Cat.
\]
Compare \cite[Def.\ 4.7.4.1]{HA}.

More generally, if $S_D^+,\,S_D^-$ are two markings of $D$, we define $(S_D^+,S_D^-)$-Cartesian fibrations as the intersection of $S_D^+$-co-Cartesian fibrations and $S_D^-$-Cartesian fibrations, and similarly for $(S_D^+,S_D^-)$-Cartesian transformations. Thus with this terminology, for a marked category $(D,S_D)$ the word `bi-Cartesian' is an abbreviation for `$(D,S_D)$-Cartesian'.
\end{defn}

\begin{prop}[Bivariant Grothendieck construction]
\label{BIV_BICART_GROT}

The covariant Grothendieck (1-)equivalence $1\coCart_D\cong 1\Fun(D,1\Cat)$ matches the subcategories $\Biv^\pm_D$ and $\biCart_D$.
\end{prop}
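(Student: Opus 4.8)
The plan is to obtain the result by restricting the covariant Grothendieck $2$-equivalence $1\coCart_D\cong 1\Fun(D,1\Cat)$ (the co-Cartesian counterpart of \ref{CAT_GROT_STATUS}), and checking that it carries one specified $2$-subcategory onto the other. The first observation is that both $\biCart_D$ and $\Biv^\pm_D$ are cut out by specifications (\ref{CAT_FURTHER_SPECDEF}) whose $2$-cell clause is \emph{all} $2$-cells: on the fibration side this is the (op-)analogue of \ref{CAT_FIB_CART_2CAT} together with the intersection description of Definition \ref{BIV_BICART_DEF}, and on the functor side it is the specification of \ref{BIV_FUN_2CAT}, now with the base-change clause omitted since we are matching $\Biv^\pm_D$ rather than $\Biv^{\pm r}_D$. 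As the Grothendieck construction is a $2$-equivalence it identifies the ambient mapping categories, so by the mapping property of specified subcategories (Lemma \ref{CAT_FURTHER_SPEC}) it will suffice to match objects and $1$-morphisms; the $2$-cells are then matched for free. Note that no base-change condition enters at any point.

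For objects, let $E\rightarrow D$ be a co-Cartesian fibration with fibre transport $H=\fibre_D E$, so that $H(f)=f_!$ is the co-Cartesian transport of $f$. I would invoke the standard interplay between adjoints and Cartesian lifts (cf.\ \cite[\S4.7.4]{HA}): for $f\in S_D$, the fibration $E$ admits Cartesian lifts of $f$ if and only if $f_!$ admits a right adjoint, and that right adjoint is then the Cartesian transport $f^!$. Ranging over $f\in S_D$ shows that $E$ lies in $\biCart_D$ exactly when $H$ is right $S_D$-bivariant (\ref{BIV_FUN_DEF}), i.e.\ lies in $\Biv^\pm_D$.

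For $1$-morphisms --- which I expect to be the main obstacle --- a map in $1\coCart_D$ between bi-Cartesian fibrations $E,E'$ is a co-Cartesian transformation $\phi$, corresponding to a natural transformation $\Phi:H\rightarrow H'$, and I must show that $\phi$ is moreover an $S_D$-Cartesian transformation precisely when each naturality square of $\Phi$ over $f\in S_D$ is Beck--Chevalley. This is exactly the content delivered by the Lemma of \ref{BIV_BICART_MATE}: for $z$ in the fibre over the target of $f$, its commuting triangle exhibits the conjugate (mate) map $\phi f^!z\rightarrow f^!\phi z$ as the factorisation of $\phi(f^z)$ through the Cartesian arrow $f^{\phi z}$. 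Since $f^{\phi z}$ is Cartesian, the cancellation property of Cartesian arrows gives that the mate is invertible if and only if $\phi(f^z)$ is itself Cartesian, i.e.\ if and only if $\phi$ carries the Cartesian lift $f^z$ to a Cartesian arrow. Ranging over all $z$ and all $f\in S_D$, the Beck--Chevalley condition defining membership in $\Biv^\pm_D$ thus becomes precisely preservation of $S_D$-Cartesian arrows, i.e.\ membership in $\biCart_D$. The remaining care is only bookkeeping: checking that the orientation of $f_!,f^!$ in the naturality square matches the convention of \ref{BIV_BC_MATE}, and recording that, the ambient functor being a $2$-equivalence, matching objects and $1$-morphisms indeed forces the $2$-cells to match.
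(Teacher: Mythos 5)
Your proof is correct and follows essentially the same route as the paper's: reduce the statement to a check on objects and morphisms, identify bi-Cartesian fibrations with right bivariant functors via the correspondence between Cartesian lifts and right adjoints to the co-Cartesian transports, and match morphisms by using the triangle of Lemma \ref{BIV_BICART_MATE} to show the mate is invertible precisely when $\phi$ preserves Cartesian lifts. The only divergence is presentational: where you cite \cite[\S 4.7.4]{HA} for the object-level equivalence, the paper proves both directions explicitly (noting that the construction of Cartesian lifts from the adjunction really uses co-Cartesian lifts over all of $D\downarrow x$, not merely lifts of $f$ itself), a caveat your formulation avoids by assuming $E$ is a co-Cartesian fibration from the outset.
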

\begin{proof}
We will check on objects and morphisms. 

\begin{labelitems}
\item[Objects]  
Let $p:E\rightarrow D$ be a co-Cartesian fibration.
By Lurie's definition of adjunction, $f_!$ admits a right adjoint for every morphism $f:x\rightarrow y$ in $D$ if and only if $p$ is a locally Cartesian fibration. Hence, we are reduced to showing that in this case, $p$ is even a Cartesian fibration.

Suppose $f_!$ admits a right adjoint $(f_!)^*$ with counit $\epsilon$. Then for any $g:w\rightarrow x$ in $D$
we get a commutative diagram
\[\xymatrix{
  E_{g}(-,(f_!)^*z) \ar[r]^-{f_{(f_!)^*z}\circ-} \ar@{=}[d]
    & E_{fg}(-,f_!(f_!)^*z) \ar[r]^-{-\circ\epsilon} \ar[d]
      & E_{fg}(-,z) \\
  E_x(g_!-,(f_!)^*z) \ar@{=}[r] 
    &  E_y(f_!g_!-,z) \ar@{=}[ur] 
}\] 
where, as usual, the unfortunately rather Baroque symbol $f_{(f_!)^*z}$ stands for the co-Cartesian lift of $f$ starting at $(f_!)^*z$.
This map therefore satisfies the universal property of a Cartesian lift of $f$. (Note that $p$ really needs to be a co-Cartesian fibration at least over $D\downarrow x$: the argument fails if we only assume that $f$ has co-Cartesian lifts.)

\item[Morphisms]
Let $E,\,E':\biCart_D$, and let $\phi:E\rightarrow E^\prime$ be a co-Cartesian transformation. By lemma \ref{BIV_BICART_MATE} applied to each $\Delta^1\rightarrow D$, $\phi$ is a bi-Cartesian transformation if and only if the associated natural transformation is bivariant.\qedhere
\end{labelitems}
\end{proof}

\begin{remark}[Opposite version]
\label{BIV_BICART_OPP}

By a dual argument we can identify the $D,(S_D,D)$ category $\Cart_{D,(S_D,D)}$ of $(S_D,D)$-Cartesian fibrations over $D$, as a subcategory of $\Cart_D$, with $\Biv^{\mp\ell}(D^\op,1\Cat)$.
\end{remark}

\begin{defn}[Beck-Chevalley condition for fibrations]
\label{BIV_BICART_BC}

Let $p:E\rightarrow D$ be a 1-functor and let
\[\xymatrix{
w \ar[r]^{\tilde g} \ar[d]_{\tilde f} & x \ar[d]^f \\
y \ar[r]^g & z
}\]
be a square in $D$. Suppose that $f$, $\tilde f$ admit Cartesian lifts and $g$, $\tilde g$ admit co-Cartesian lifts to $E$. For each $u:E_y$ we obtain a natural map 
\[ \nu_u: \tilde g_!\tilde f^!u\rightarrow f^!g_!u \]
by factorising $\tilde f^!u\rightarrow u\rightarrow g_!u$ through the co-Cartesian arrow $\tilde f^!u\rightarrow \tilde g_!\tilde f^!u$ and the Cartesian arrow $f^!g_!u\rightarrow g_!u$. We say that $E$ satisfies the \emph{Beck-Chevalley condition} on this square if $\nu_u$ is invertible.
\end{defn}

\begin{lemma}
When $f$ also admits co-Cartesian lifts, $\nu_u$ is homotopic to the action on $u$ of the conjugate 2-cell obtained from the adjunction $f_!\dashv f^!$.
\end{lemma}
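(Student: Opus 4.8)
The plan is to transpose both maps across the adjunction $f_!\dashv f^!$ and to exploit the explicit description of its counit. Write $m_u\colon\tilde g_!\tilde f^!u\to f^!g_!u$ for the value at $u$ of the conjugate 2-cell. By construction (\ref{BIV_BC_MATE}, in the one-sided form that conjugates only along the single adjunction $f_!\dashv f^!$) this is the transpose of the comparison map $\beta_u\colon f_!\tilde g_!\tilde f^!u\to g_!u$ supplied by the commutativity of the square. Concretely $\beta_u$ is pinned down by the co-Cartesian universal property: its precomposite with the co-Cartesian lift of $f\tilde g=g\tilde f$ starting at $\tilde f^!u$ is the composite $\tilde f^!u\to u\to g_!u$ of the Cartesian arrow over $\tilde f$ with the co-Cartesian arrow over $g$. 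Since transposition across $f_!\dashv f^!$ is an equivalence, it suffices to prove that the transpose of $\nu_u$ is also $\beta_u$.

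Here the extra hypothesis enters: because $f$ now admits co-Cartesian as well as Cartesian lifts, Proposition \ref{BIV_BICART_GROT} furnishes the adjunction $f_!\dashv f^!$ together with its counit $\epsilon_f$, computed by factoring the Cartesian arrow $f^{g_!u}\colon f^!g_!u\to g_!u$ through the co-Cartesian arrow $f_{f^!g_!u}$. The transpose of $\nu_u$ is $\epsilon_f\circ f_!\nu_u$, and I would check that it satisfies the property characterising $\beta_u$ by precomposing with the co-Cartesian lift of $f\tilde g$, itself the composite of the co-Cartesian lift of $\tilde g$ at $\tilde f^!u$ followed by that of $f$ at $\tilde g_!\tilde f^!u$. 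Naturality of the co-Cartesian lift of $f$ lets me commute $f_!\nu_u$ with the inner lift, after which the counit formula collapses the result to $\epsilon_f\circ f_{f^!g_!u}\circ\nu_u=f^{g_!u}\circ\nu_u$; precomposing once more with the co-Cartesian lift of $\tilde g$ gives exactly $\tilde f^!u\to u\to g_!u$ by the definition of $\nu_u$ in \ref{BIV_BICART_BC}. Thus the transpose of $\nu_u$ meets the universal property of $\beta_u$, so $\nu_u$ and $m_u$ are homotopic; naturality in $u$ is automatic, as each step is a universal construction.

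The argument is essentially bookkeeping rather than conceptual, and the one genuine point of care is to invoke the naturality of the co-Cartesian lift of $f$ at the right moment, so that $f_!\nu_u$ meets the counit in the manner dictated by \ref{BIV_BICART_GROT}. It is worth noting that this is a direct generalisation of Lemma \ref{BIV_BICART_MATE}, which is the special case $\tilde f=f$ with $(\tilde g,g)$ the two legs of a co-Cartesian transformation over $\Delta^1$; unlike an attempted reduction to that lemma, the transpose computation never requires $\tilde f$ to admit co-Cartesian lifts, in keeping with the hypotheses of the present statement.
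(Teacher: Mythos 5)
Your proof is correct, but it is not the route the paper takes. The paper's entire proof is the single line ``by Lemma \ref{BIV_BICART_MATE} applied to $\phi=g_!$'': it specialises the earlier lemma on mates versus Cartesian lifts to the horizontal-transport transformation with components $(\tilde g_!,g_!)$, viewed as a map from the restriction of $E$ along $\tilde f$ to its restriction along $f$, both regarded as fibrations over $\Delta^1$. You instead argue directly: transpose $\nu_u$ across $f_!\dashv f^!$ using the explicit counit supplied by Proposition \ref{BIV_BICART_GROT}, and identify the transpose with $\beta_u$ via the uniqueness clause of the co-Cartesian universal property. The comparison favours your argument in one respect that your closing remark correctly isolates: Lemma \ref{BIV_BICART_MATE} requires its \emph{source} to be a co-Cartesian fibration over $\Delta^1$, so the paper's reduction needs the pulled-back map $\tilde f$ to admit co-Cartesian lifts as well --- a hypothesis absent from the statement as written (it is harmless in the intended application to bi-Cartesian fibrations, and it is what the paper means if one invokes its standing convention of denoting a morphism and its pullback by the same letter). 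Relatedly, under the literal hypotheses the only available meaning of ``the conjugate 2-cell obtained from the adjunction $f_!\dashv f^!$'' is your one-sided transpose of the comparison map $\beta_u$, whereas the paper's reduction computes the two-sided mate built from both $f_!\dashv f^!$ and $\tilde f_!\dashv\tilde f^!$; the two agree whenever both are defined, so the proofs coincide on their common domain of validity, but yours is the sharper argument relative to the stated hypotheses, at the cost of redoing by hand the diagram chase that Lemma \ref{BIV_BICART_MATE} was designed to encapsulate.
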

\begin{proof}
By lemma \ref{BIV_BICART_MATE} applied to $\phi=g_!$.
\end{proof}

\begin{defn}[Base change for fibrations]
\label{BIV_BICART_BASECHANGE}

Let $S_D^\pm$ be two markings of $D$, and suppose that fibre products of elements of $S_D^-$ with elements of $S_D^+$ are representable; call the triple $(D,S_D^\pm)$ a \emph{twice marked category with base change}.

Let $p:E\rightarrow D$ be an $(S_D^+,S_D^-)$-Cartesian fibration. We say that $E$ \emph{has base change} if it satisfies the Beck-Chevalley condition for each square formed as a fibre product of a member of $S_D^+$ by one of $S_D^-$. The full subcategory of $\biCart_{D\sep S_D}$, resp.~$\Cart_{D\sep (S_D^+,S_D^-)}$ whose objects have base change is denoted $\biCart_{D\sep S_D}^r$, resp.~$\Cart_{D\sep (S_D^+,S_D^-)}^r$.
\end{defn}

\begin{remark}
\label{BIV_BICART_CC}
The dual \emph{collar change condition} for a pair of markings with collar change is, of course, formed by replacing fibre product with pushout squares in the preceding definition.
\end{remark}

\begin{prop}
A bi-Cartesian (that is, $(D,S_D^-)$-Cartesian) fibration has base change if and only if the right bivariant functor it classifies has base change. Dually, an $(S_D^+,D)$-Cartesian fibration has base change if and only if the left bivariant functor it classifies has collar change.
\end{prop}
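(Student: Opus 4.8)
The plan is to reduce both conditions to a single pointwise comparison of $2$-cells, leaning on Proposition \ref{BIV_BICART_GROT}, which already identifies the bi-Cartesian fibration $p:E\to D$ with the right bivariant functor $H$ it classifies. First I would record what \ref{BIV_BICART_GROT} gives us: $H(f)=f_!$ for every arrow $f$, and for $f\in S_D^-$ the Cartesian transport $f^!$ is right adjoint to $f_!$. Next I would observe that the two notions of base change --- the fibration-theoretic one (Definition \ref{BIV_BICART_BASECHANGE}) and the functor-theoretic one (Definition \ref{BIV_FUN_DEF}) --- range over exactly the same collection of squares, namely the fibre products of an arbitrary arrow by a member of $S_D^-$. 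It therefore suffices to fix one such square and match the two transformations it produces.

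On a fixed square, the fibration yields the Beck--Chevalley map $\nu_u:\tilde g_!\tilde f^!u\to f^!g_!u$ of Definition \ref{BIV_BICART_BC}, while $H$ yields the mate (Definition \ref{BIV_BC_MATE}) of the commuting square obtained by applying $H$, written $f_!g^!\to g^!f_!$ in the abusive notation of Definition \ref{BIV_FUN_DEF}. The two squares differ only in which of the two parallel directions is labelled as the marked one, so after transposing that cosmetic choice the mate and $\nu$ have the same source and target as functors $H(y)\to H(x)$. The crucial step is the lemma immediately following Definition \ref{BIV_BICART_BC}: because $S_D^+=D$, the arrow $f$ admits co-Cartesian lifts, so that lemma (itself a consequence of Lemma \ref{BIV_BICART_MATE} with $\phi=g_!$) identifies $\nu_u$, for each $u$, with the action on $u$ of the conjugate $2$-cell of $f_!\dashv f^!$ --- which is precisely the mate. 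Since we are working in $1\Cat$, a natural transformation is an equivalence exactly when it is so objectwise; hence $\nu_u$ is invertible for all $u$ if and only if the mate is an equivalence in $1\Cat(Hy,Hx)$. Running this over all the relevant squares shows that $E$ has base change if and only if $H$ does.

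For the dual I would transport the result along $(-)^{\op_1}$ rather than rerun the argument. Passing to $E^{\op_1}\to D^{\op_1}$ interchanges Cartesian and co-Cartesian lifts, so an $(S_D^+,D)$-Cartesian fibration becomes a bi-Cartesian (that is, $(D^{\op_1},(S_D^+)^{\op_1})$-Cartesian) fibration over $D^{\op_1}$; at the same time $(-)^{\op_1}$ sends pushout squares to pullback squares and left adjoints to right adjoints (cf.\ \ref{BIV_FUN_OPP} and Remark \ref{BIV_BICART_OPP}). Feeding this into the case already proved and reading the output back in $D$ gives exactly the statement that the collar-change Beck--Chevalley condition (Remark \ref{BIV_BICART_CC}) holds for $E$ if and only if the left bivariant functor it classifies has collar change. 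The main obstacle here is not mathematical depth but bookkeeping: one must align the orientation conventions of the two definitions --- which pair of parallel arrows is marked, and so receives adjoints rather than Cartesian lifts --- and check that the transposition makes the source and target of $\nu$ and of the mate coincide on the nose. All the real content is already packaged in Proposition \ref{BIV_BICART_GROT} and in the comparison lemma following Definition \ref{BIV_BICART_BC}.
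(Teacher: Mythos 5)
Your first paragraph is correct, and it is essentially the paper's own (tacit) argument: the proposition carries no separate proof in the text precisely because, once Proposition \ref{BIV_BICART_GROT} supplies the dictionary, the unlabelled lemma following Definition \ref{BIV_BICART_BC} (itself Lemma \ref{BIV_BICART_MATE} applied to $\phi=g_!$) identifies $\nu_u$ objectwise with the functor-level mate, and invertibility of a natural transformation of functors between $1$-categories is an objectwise condition.

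The dual half, however, has a genuine gap: transporting along $(-)^{\op_1}$ does not reduce it to the case already proved. The operation $\op_1$ reverses the base as well as the total space. The base-change condition for the $(S_D^+,D)$-Cartesian fibration $E\rightarrow D$ (Definition \ref{BIV_BICART_BASECHANGE}) concerns \emph{pullback} squares in $D$ (fibre products of a member of $S_D^+$ by an arbitrary arrow), and these become \emph{pushout} squares in $D^{\op_1}$; so under $\op_1$ that condition corresponds to the \emph{collar-change} condition of Remark \ref{BIV_BICART_CC} for the bi-Cartesian fibration $E^{\op_1}\rightarrow D^{\op_1}$, not to its base-change condition, and the first statement of the proposition says nothing about collar change. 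Symptomatically, the hypotheses do not transport either: to apply the first statement to $E^{\op_1}$ over $D^{\op_1}$ you would need the marking $(S_D^+)^{\op_1}$ to have base change in $D^{\op_1}$, i.e.\ $S_D^+$ to be closed under pushouts in $D$, which is not assumed. Your closing sentence also pairs conditions that live on different classes of squares: the collar-change condition for $E$ over $D$ concerns pushouts in $D$, whereas collar change for the classified functor --- a functor on $D^{\op_1}$ --- concerns pushouts in $D^{\op_1}$, i.e.\ pullbacks in $D$; and the proposition in any case asserts that $E$ has \emph{base} change.

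The repair is to dualize the lemmas rather than the finished statement. The pointwise comparison in your first paragraph never uses that the square in the base is a pullback; it works for any commuting square admitting the requisite lifts. For a square as in Definition \ref{BIV_BICART_BC} inside an $(S_D^+,D)$-Cartesian fibration, all four arrows admit Cartesian lifts and $g,\tilde g\in S_D^+$ admit co-Cartesian lifts, so the dual of Lemma \ref{BIV_BICART_MATE} --- applied to the \emph{Cartesian} transformation $\phi=f^!$ --- identifies $\nu_u$ with the value at $u$ of the mate built from the adjunctions $g_!\dashv g^!$. That mate is precisely the collar-change $2$-cell of the classified left bivariant functor $D^{\op_1}\rightarrow 1\Cat$ on the corresponding pushout square in $D^{\op_1}$; both conditions then refer to the same squares (pullbacks in $D$), and objectwise invertibility concludes as before.
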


\begin{eg}
\label{BIV_BICART_EX}

When $D:1\Cat$ admits fibre products, the target projection $D^{\Delta^1}\rightarrow D$ is a bi-Cartesian fibration with base change.
\end{eg}

\subsection{Free bivariant fibrations}

In this section we show that the presence of base change permits a convenient theory of free fibrations, after \S\ref{CAT_FIB}: to wit, the free Cartesian fibration on a co-Cartesian fibration is free bi-Cartesian with base change.

\begin{defn}[Free fibrations]
\label{BIV_BICART_FREEDEF}

A bi-Cartesian fibration with base change $E\rightarrow D$ is said to be \emph{$n$-free} (as a bi-Cartesian fibration with base change) on a subcategory $E_0\subseteq E$ if restriction induces an equivalence of $(n-1)$-categories
\[ \biCart_D(E,F) \cong 1\Fun_D(E_0,F) \]
for any $F:\biCart_D$, where $n:\{1,2\}$. Compare Definition \ref{CAT_FIB_FREEDEF}. (We will see that as for Cartesian fibrations, $2$-free fibrations always exist, and that $1$-free and $2$-free are equivalent properties.)\end{defn}

\begin{prop}[Bivariant path category]
\label{BIV_BICART_FREE}

Let $(D;\,S_D)$ be a marked category with base change.  
The free co-Cartesian fibration adjunction \eqref{CAT_FIB_FREE} $1\Cat\downarrow D \rightleftarrows 1\coCart_D$ restricts to an adjunction between $\Cart_{D;\,S_D}$ and $\biCart^r_{D;\,S_D}$. 
In particular, a free co-Cartesian fibration on a free $S$-Cartesian fibration is free $S$-bi-Cartesian with base change.

Moreover, 1-free bi-Cartesian fibrations with base change are 2-free.
\end{prop}
\begin{proof}
We will use the criteria of Lemma \ref{CAT_ADJ_SUBCAT}. Clearly, the restriction of the right adjoint inclusion maps $\biCart^r_{D;\,S_D}$ into $\Cart_{D;\,S_D}$. For the rest, it is enough to check the following:
\begin{enumerate}
  \item For any $S_D$-Cartesian fibration $p:E\rightarrow D$, $E\downarrow D$ is $S_D$-Cartesian;
  \item the inclusion $j_E:E\rightarrow E\downarrow D$ is an $S_D$-Cartesian transformation; 
  \item a $p$-left Kan extension along $j_E$ of an $S_D$-Cartesian transformation remains $S_D$-Cartesian. 
\end{enumerate}
Indeed, the first condition implies that the left adjoint preserves objects, while the other two ensure that the adjunction equivalence --- which is given by (inverse) $p$-left Kan extension and restriction along $j_E$ --- restricts to an equivalence $\biCart^r_{D;\,S_D}(LE,LF)\cong \Cart_{D;\,S_D}(E,RLF)$.

\begin{lemma}[Objects]
\label{BIV_BICART_FREE_OB}
  
  Let $(D,S_D^\pm)$ be a twice marked category with base change, and let $p:E\rightarrow  D$ be $S_D^-$-Cartesian. The free $S_D^+$-co-Cartesian fibration $E\downarrow^\sharp (D,S_D^-) \stackrel{\rm{tar}}{\rightarrow} D$ is $(S_D^+, S_D^-)$-Cartesian with base change, and the canonical map $E\rightarrow E\downarrow^\sharp (D, S_D^-)$ is an $S_D^-$-Cartesian transformation.

\end{lemma}
\begin{proof}
\begin{labelitems}
\item[Bivariance]
That $E\downarrow^\sharp(D,S_D^+)$ is $S_D^-$-Cartesian follows by decomposing it as follows: 
\[\xymatrix{
	E\downarrow^\sharp(D,S_D^+) \ar[r]^-{\hat p} \ar[d] \ar@{}[dr]|(0.35){\lrcorner} & D^{\Delta^\sharp} \ar[r]^{\mathrm{tar}}
    \ar[d]^{\mathrm{src}} & D \\
  E \ar[r]_p & D
}\]
Since elements of $S_D^+$ admit pullbacks by elements of $S_D^-$, the target fibration $(D,S_D^+)^{\Delta^\sharp}\rightarrow D$ is $S_D^-$-Cartesian (with Cartesian arrows given by pullback squares); meanwhile, $\hat p$ is a pullback of the $S_D^-$-Cartesian fibration $p$ (with Cartesian arrows those having $p$-Cartesian image in $E$ \cite[Prop.~2.1.4.3.(2)]{HTT}).

\item[Cartesian arrows]
From the description of Cartesian arrows in this composite, we find that for $f:\alpha \rightarrow \beta$ in $S_D^-$, the pullback in $E\downarrow^\sharp (D, S_D^+)$ of an object $(x,\, px\rightarrow\beta)$ is represented by the diagram 
\[\xymatrix{
	\tilde f^!x\ar@{|->}[r]\ar[d]
		&  \alpha\times_\beta p x\ar[d]_{S_D^-}^{\tilde f} \ar[r]|-{S_D^+}
		& \alpha\ar[d]^f_{S_D^-} \\
	x\ar@{|->}[r] 	& px\ar[r]|{S_D^+} 	& \beta
}\]
where the right-hand square is a pullback in $D$ and the left is $p$-Cartesian. 

\item[Canonical map] In particular, a $p$-Cartesian arrow $f^x:f^!x\rightarrow x$ in $E$ (over $f:\alpha\rightarrow\beta$) maps, under the canonical map, to a diagram
  \[\xymatrix{
    \tilde f^!x\ar@{|->}[r]\ar[d]
      &  \alpha\ar[d]^f \ar@{=}[r] & \alpha\ar[d]^f \\
    x\ar@{|->}[r] 	& \beta \ar@{=}[r] 	& \beta
  }\]
  which is certainly Cartesian by the preceding criterion.

\item[Base change] Let $[g:\gamma\rightarrow\beta]\in S_D^+$, $y:E\downarrow^\sharp\{\gamma\}$. Using the above description, we check the Beck-Chevalley condition for the fibre product of $g$ and $f$ by observing that in the composite diagram
\[\xymatrix{
  \tilde{f}^!y\ar@{|->}[r]\ar[d] &  
    \alpha\times_\beta p y \ar[d]^{\tilde f} \ar[r] & 
    \alpha\times_\beta\gamma \ar[r]^-g \ar[d]^f & 
    \alpha\ar[d]^f \\
  y\ar@{|->}[r] & py \ar[r] & 
    \gamma\ar[r]^g & \beta
}\]
presenting $f^!g_!(y,\,py\rightarrow \gamma)$, the middle square is Cartesian, whence it is also presents $g_!f^!(y,\,py\rightarrow \gamma)$. \qedhere
\end{labelitems}
\end{proof}

\begin{lemma}[Morphisms]
\label{BIV_BICART_FREE_MOR}

Let $F\rightarrow D$ be $(S_D^+,S_D^-)$-Cartesian with base change and let $\phi:E\rightarrow F$ be an $S_D^-$-Cartesian transformation. Then the relative left Kan extension $\tilde\phi:E\downarrow D\rightarrow F$ of $\phi$ is an $S_D^-$-Cartesian transformation.
\end{lemma}
\begin{proof}
In the notation established above:
\begin{align*}
\tilde\phi[\tilde f^!x,\alpha\times_\beta px	\stackrel{\tilde g}{\rightarrow}\alpha] 
  & = \tilde\phi \tilde g_!\tilde f^!x 	& \text{formula for }g_! \\
  &= \tilde g_!\phi\tilde f^!x 	& \tilde\phi~\text{co-Cartesian} \\
  &= \tilde g_!\tilde f^!\phi x 	& \phi~\text{Cartesian} \\
  & = f^! g_!\phi x 				& \text{base change in }F \\
  &= f^!\tilde\phi g_!x 			& \tilde\phi~\text{co-Cartesian} \\
  &= f^!\tilde\phi[x,px\stackrel{g}{\rightarrow}\beta] 
									& \text{formula for }g_! &  \qedhere
\end{align*}
\end{proof}
 To complete the proof of Proposition \ref{BIV_BICART_FREE}: 1-free is equivalent to 2-free by the corresponding statements for $\coCart_D$ and $\Cart_{D,\,S_D}$.
\end{proof}

\begin{remark}[Collar change]

Dual statements hold for bivariant functors with the collar change property \ref{BIV_BICART_CC}: the free Cartesian fibration on a co-Cartesian fibration has the collar change property.
\end{remark}

\subsection{Twisted Cartesian fibrations}
\label{BIV_TWIST}

This short section concerns a generalisation of the notion of `bifibration' studied in \cite[\S2.4.7]{HTT} and \cite[\S4]{AF_fibrations}. To minimise the potential for confusion of this concept with that of bi-\emph{Cartesian} fibration studied in \S\ref{BIV_BICART}, we abandon the terminology of \emph{op.~cit.}\ in favour of the more suggestive \emph{twisted Cartesian}  fibration. We need a notion of functor classified by a \emph{twisted bi-Cartesian} fibration to construct the bivariant Yoneda embedding.

\begin{defn}
\label{BIV_TWIST_DEF}

Let $C,\,D:\Cat^+$. A 1-functor $p:E\rightarrow C\times D$ is said to be a \emph{twisted bi-Cartesian fibration} if the following conditions are satisfied:
\begin{enumerate}
\item The composite $E\rightarrow C$ is a $(S_C,C)$-Cartesian fibration and $p$ maps Cartesian and $S_C$-co-Cartesian arrows to $D$-slices in $C\times D$.
\item For each $c:C$, $p_c:E_c\rightarrow D$ is a $(D,S_D)$-Cartesian (alias bi-Cartesian) fibration.
\item For each $f:c'\rightarrow c$ in $C$, the pullback functor
\[\xymatrix{
E_c \ar[rr]^{f^!}\ar[dr] && E_{c'}\ar[dl] \\ & D
}\] 
is a transformation of bi-Cartesian fibrations.
\item For each $g:c\rightarrow c"$ in $S_C$, the pushforward functor
\[\xymatrix{
E_c \ar[rr]^{g_!}\ar[dr] && E_{c"}\ar[dl] \\ & D
}\] 
is a transformation of bi-Cartesian fibrations.
\end{enumerate}
\end{defn}

\begin{para}[Twisted fibre transport functor]
\label{BIV_TWIST_GROT}

 Let $p:E\rightarrow C\times D$ be a twisted bi-Cartesian fibration. By condition (i), $p$ is a morphism in the category of $(S_C,C)$-Cartesian fibrations over $C$. Thus, contravariant Grothendieck integration turns it into a natural transformation $\fibre_Cp:\fibre_CE \rightarrow \underline D$ of left bivariant functors $C^\op\rightarrow 1\Cat$, where $\underline D$ denotes the constant functor with value $D$.

Now, running $\fibre_Cp$ through a little bit of currying:
\begin{align*}
  \Biv^\mp_{C^\op}(\fibre_CE, \underline{D}) \quad & 
    \cong\quad 1\Fun(\Delta^1, \Biv^\mp(C^\op,1\Cat) )_{\rm{src}=\fibre_CE,~\rm{tar}=\underline D} \\
  & \cong\quad \Biv^\mp\left(C^\op, 1\Fun(\Delta^1, 1\Cat)_{\rm{src}=E_c,~\rm{tar}=D} \right)
\end{align*}
we obtain a functor $C^\op\rightarrow 1\Cat\downarrow D$. (In these formulas, the subscripts mean that we restrict to a fibre of the source/target projection of the mapping category.) 

This functor maps $c:C$ to $p_c:E_c\rightarrow D$ and $f:c'\rightarrow c$ to $f^!:E_c\rightarrow E_{c'}$, so by conditions ii) and iii) of Def.\ \ref{BIV_TWIST_DEF} it factors through the subcategory of $(D,S_D)$-Cartesian fibrations. Condition iv) ensures that the left adjoints to members of $S_C$ also belong to $\biCart_D$. Finally, take the fibre transport functor \ref{BIV_BICART_GROT} to obtain a left bivariant functor $C^\op\rightarrow\rm{RBiv}(D,1\Cat)$. In the syntax of \ref{BIV_SYNTAX}:
\begin{align*}
\alignhead{Twisted bivariant fibre transport functor}
c:C^\op\ (\mp) \quad \return \quad d:D\ (\pm) \quad &\return \quad \fibre_CE(c,d):1\Cat \\
\Rightarrow\hspace{3em} c:C^\op\ (\mp),\ d:D\ (\pm) \quad &\return \quad \fibre_CE(c,d):1\Cat
\end{align*}
\end{para}

\begin{para}[Base change]
\label{BIV_TWIST_BC}

If $p:E\rightarrow C\times D$ is twisted bi-Cartesian and satisfies base change in each variable --- that is, $E\rightarrow C$ is $(S_C,C)$-Cartesian with base change and for each $c:C$, $p_c:E_c\rightarrow D$ is $(D,S_D)$-Cartesian with base change --- then by Proposition \ref{BIV_BICART_BC} the output is a bivariant bifunctor with base change in the second variable and collar change in the first. The formula is:
\begin{align*}
\alignhead{Twisted bivariant fibre transport functor, base change}
c:C^\op\ (\mp,\, \ell),\ d:D\ (\pm,\, r) \quad &\return \quad \fibre_CE(c,d):1\Cat
\end{align*} 
\end{para}

\begin{remark}[Symmetry]
\label{BIV_TWISTED_SYM}

Our definition and construction is asymmetric: it is optimised for taking fibres over $C$ first and $D$ second. The reader may well imagine a dual approach where we proceed in the opposite order. We would then be left with the task of showing that the two constructions agree. Ultimately, I would prefer a more symmetric definition and construction.

A symmetric definition runs as follows: let $S^+$, resp.~ $S^-$ be the set of morphisms in $C\times D$ that project to isomorphisms in $C$, resp.~$D$. Then one can show that $p:E\rightarrow C\times D$ is twisted Cartesian iff it is $(S^+,S^-)$-Cartesian with base change in the sense of Def.\ \ref{BIV_BICART_BC}. A symmetric construction of the fibre transport functor appears in the proof of \cite[Lemma 4.1]{AF_fibrations}, but the method only works in the stated form for twisted fibrations in spaces, not in 1-categories.
\end{remark}

\subsection{Bivariant Yoneda lemma}
\label{BIV_YON}

In this section we combine our previous efforts to achieve what was promised in the title of this paper: a bivariant Yoneda embedding, an associated notion of \emph{representable} bivariant functor, and a Yoneda lemma analogous to the extended Yoneda lemma of \S\ref{CAT_EVAL_YON}. From hereon in, marked categories and bivariant functors are assumed to have base change unless otherwise indicated.

\begin{para}[Spans]
\label{BIV_YON_SPANS}

Denote by $\Lambda$ the \emph{walking span}, which is the free category on the picture 
\[\xymatrix{&s_{01}\ar[dl]_p\ar[dr]^q \\ s_0&& s_1}\] 
considered as a marked category with $S_\Lambda=\{p\}$. A \emph{correspondence} or \emph{span} in $D:1\Cat^+$ is a marked functor $\Lambda\rightarrow D$, i.e.~ a diagram in $D$ of the above shape whose wrong-way map is marked. The 1-category of spans in $D$ is $D^\Lambda=1\Fun^+(\Lambda,D)$. The fibre $\Cor_D(x,y)$ of the projection $(\rm{pr}_0,\rm{pr}_1):D^\Lambda\rightarrow D\times D$ over $(x,y):D^2$ is called the \emph{category of spans from $x$ to $y$}. We will sometimes abbreviate the data of a span in $D$ by $(p,q)$.
\end{para}

\begin{para}[Free property of spans]
\label{BIV_YON_FREE}

By construction, the category  of spans in $ D$ fits into a pullback square 
\[\xymatrix{ 
		& D^\Lambda\ar[dl]_p\ar[dr]^q \\
	D^{\Delta^\sharp}\ar[dr]_{\rm{src}} 	&& D^{\Delta^1}\ar[dl]^{\rm{src}} \\
		&  D
}\] 
where $D^{\Delta^\sharp}$ stands, as before, for the full subcategory of $D^{\Delta^1}$ spanned by the marked arrows \eqref{CAT_MARK_TRIV}. In the notation of \ref{CAT_FIB_SLICE}, $D^\Lambda=(D\downarrow^\sharp D) \downarrow D$ with the outer comma category taken on the source projection of $D\downarrow^\sharp D$.

\begin{cor0}[of Prop.\ \ref{BIV_BICART_FREE}] The second projection $\rm{pr}_1:D^\Lambda\rightarrow D$ is a free bi-Cartesian fibration with base change on the diagonal subcategory $D\subseteq D^\Lambda$. 
\end{cor0}
\noindent More generally, if $E:1\Cat\downarrow D$ then $E\times_DD^\Lambda \cong (D\downarrow^\sharp E) \downarrow D$ is free bi-Cartesian with base change on $E$, where the fibre product is taken over the $s_0$ (left) evaluation $D^\Lambda\rightarrow D$.
\end{para}

\begin{lemma}[Universal span]
\label{BIV_YON_UNIV}

The left and right target projections $(\rm{pr}_0,\rm{pr}_1):D^\Lambda\rightarrow D\times D$ set up the category of spans in $D$ as a twisted bi-Cartesian fibration with base change in both variables.
\end{lemma}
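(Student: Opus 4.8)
The plan is to check the four clauses of Definition~\ref{BIV_TWIST_DEF} together with the two base change clauses of~\ref{BIV_TWIST_BC}, taking the left-foot projection $\rm{pr}_0$ (recording $s_0$) as the outer variable $C$ and the right-foot projection $\rm{pr}_1$ (recording $s_1$) as the inner variable $D$. This division of roles is forced by the marking: over the right foot the leg transported is unmarked, so one gets a genuine bi-Cartesian fibration, whereas over the left foot the leg transported is the marked one, so only marked arrows may be pushed forward. Accordingly the output will be right bivariant in $s_1$ and left bivariant in $s_0$, exactly as the twisted transport functor of~\ref{BIV_TWIST_GROT} demands.

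The inner clause~(ii), with its base change, is essentially already in hand. Fixing $s_0=x$ exhibits the fibre $(D^\Lambda)_{s_0=x}$ as the fibre product $\{x\}\times_D D^\Lambda$ over the left ($s_0$) evaluation, which by the parametrised free property stated at the close of~\ref{BIV_YON_FREE} (applied to $E=\{x\}$, resting on Proposition~\ref{BIV_BICART_FREE}) is a free bi-Cartesian fibration over the $s_1$-copy of $D$ with base change. For clause~(i) I would produce the outer lifts by hand: the Cartesian lift of an arbitrary $x'\to x=s_0$ is obtained by pulling the marked left leg back along it---this exists for \emph{all} arrows precisely because $S_D$ has base change (Definition~\ref{BIV_BC_DEF})---while the $S_D$-co-Cartesian lift of a marked $s_0\to x''$ is obtained by postcomposing the left leg, which remains marked by stability under composition. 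Both operations leave $s_1$ and the right leg untouched, so these lifts land in $D$-slices, which is precisely what clause~(i) requires of the $(S_D,D)$-Cartesian fibration $\rm{pr}_0$.

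It remains to treat the compatibility clauses~(iii),~(iv) and the outer base change. Clause~(iv) is immediate: pushforward along a marked $s_0\to x''$ merely postcomposes the left leg and alters neither the apex nor the right leg, hence commutes on the nose with the inner co-Cartesian and $S_D$-Cartesian operations. Clause~(iii) and the outer base change---Beck--Chevalley over those pullback squares in the $s_0$-factor whose transported edge is marked---both come down to the pasting law for pullbacks together with base change of $S_D$: pulling the left foot back along a $C$-arrow commutes with pulling the right leg back and with postcomposing it, and the canonical identification $s_{01}\times_y(y\times_z x)\cong s_{01}\times_z x$ furnishes the conjugate isomorphism required by~\ref{BIV_BICART_BASECHANGE}. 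I expect this pullback-pasting bookkeeping, and above all keeping the asymmetric roles of the two legs straight, to be the only genuine labour; conceptually the whole statement is governed by base change of the marking and by the free property already established.
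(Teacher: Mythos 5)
Your proposal is correct and follows essentially the same route as the paper's proof: the same assignment of $\rm{pr}_0$ as the outer and $\rm{pr}_1$ as the inner variable, clause (ii) with its base change obtained from the free bi-Cartesian property of the fibres $(D\downarrow^\sharp x)\downarrow D$ (Prop.~\ref{BIV_BICART_FREE}), and for clause (i) exactly the lifts the paper draws (pullback of the marked left leg along arbitrary arrows, postcomposition along marked ones). The only difference is bookkeeping: where you discharge clauses (iii), (iv) and the outer base change by direct pullback pasting, the paper cites its free-fibration machinery --- clause (i) with base change by another application of Prop.~\ref{BIV_BICART_FREE}, and (iii)/(iv) by observing that the outer pullback/pushforward functors arise from applying $-\downarrow D$ to $S_D$-Cartesian transformations and invoking Lemma~\ref{BIV_BICART_FREE_MOR} --- which encapsulates precisely the computations you carry out by hand.
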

\begin{proof}
By another application of Proposition \ref{BIV_BICART_FREE}, $\rm{pr}_0:D^\Lambda\rightarrow D$ is (free) $(S_D,D)$-Cartesian with base change. The pullback along $x'\rightarrow x $, resp.~pushforward along $x\rightarrow x"$, as a functor of $[x\leftarrow w\rightarrow y]$ given by the constructions:
\[  \xymatrix{
		&w'\ar[dr]\ar[dl] 
	\\ x'\ar[dr] && w\ar[dl]\ar[dr] 
	\\ & x 	&& y
} \mskip90mu \xymatrix{ 
			&&w\ar[dr]\ar[dl] 
	\\ 	&x\ar[dl] && y 
	\\ x" 
}\]
where the diamond in the left diagram is a pullback. We must check ii)-iv) of Definition \ref{BIV_TWIST_DEF}.

\begin{labelitems}
\item[ii)] Let $x:D$. The projection $\rm{pr}_1:D^{{}_x\Lambda}\cong (D\downarrow^\sharp x)\downarrow D\rightarrow D$ is is free bi-Cartesian with base change, again by \ref{BIV_BICART_FREE}. This also confirms the base change criteria \ref{BIV_TWIST_BC}.

\item[iii), iv)] Inspecting the diagrams above, for $x'\rightarrow x$, resp.~ $x\stackrel{S_D}{\rightarrow} x"$, the pullback, resp.~ pushforward functor for $\rm{pr}_0$ is got by applying $-\downarrow D$ to pullback $D\downarrow^\sharp x\rightarrow D\downarrow^\sharp x'$, resp.~postcomposition $D\downarrow^\sharp x\rightarrow D\downarrow^\sharp x"$, which are themselves $S_D$-Cartesian transformations. By Lemma \ref{BIV_BICART_FREE_MOR}, these are therefore $S_D$-bi-Cartesian transformations.
\qedhere
\end{labelitems}
\end{proof}

\begin{para}[Yoneda embedding]
\label{BIV_YON_FUNCTOR}

The universal span being a twisted bi-Cartesian fibration with base change in both variables \eqref{BIV_YON_UNIV}, via \ref{BIV_TWIST_GROT} it classifies a bifunctor which through currying gives us the \emph{Yoneda embedding}:
\begin{align*}
  && x:D^\op\ (\mp,\ell),\ y:D\ (\pm,r) \quad \return_{1\Cat} \quad 
    &\Cor_D(x,y):1\Cat \\
\Rightarrow&& x:D^\op\ (\mp,\ell) \quad \return_{1\Cat} \quad
    & \rm y_D^\mp(x):\Biv_D \mskip150mu \\
\alignhead{Bivariant op-Yoneda embedding}
\Rightarrow&& x:D\ (\pm,r) \quad \return_{1\Cat} \quad
    & \rm y_D^\mp(x):\Biv_D^\op \mskip150mu 
\end{align*}
We will see in \S\ref{BIV_LOC} that unlike the usual case, $\rm y^\mp_D$ is not fully faithful, though it is monic.
\end{para}

\begin{remark}[Conjugate formulas]

Currying in the opposite order gives us:
\begin{align*}
\alignhead{Bivariant Yoneda embedding}
 y:D\ (\pm,r) \quad\return\quad \rm{y}_D^\pm(y):\Biv^{\mp,\ell}_{D^\op}.
\end{align*}
This Yoneda embedding is just a little less convenient to use within our policy of prioritising right bivariant functors with base change over their opposite versions.
\end{remark}

\begin{para}[Evaluation map for bivariant functors]
\label{BIV_YON_EVAL}
We now turn to the definition of the bivariant Yoneda evaluation mapping:
\begin{align*}
\alignhead{Bivariant Yoneda evaluation map}
&&x:D\ (\pm,r),\ H:\Biv_D \quad 
  &\return_{1\Cat} \quad \phi:\Biv_D(\rm y^\mp_Dx,H) \quad 
    \return_{1\Cat} \quad \phi(\iden_x):H(x) \\
\Rightarrow&& x:D\ (\pm,r),\ H:\Biv_D \quad
  &\return_{1\Cat} \quad \rm{ev}:\Biv_D(\rm y^\mp_Dx,H) \quad \underset{1\Cat}{\rightarrow} \quad H
\end{align*}
which proceeds along the lines of \ref{CAT_EVAL_YON}. Thus, we use the ordinary Grothendieck construction to pull the formula over to the category of co-Cartesian fibrations over $D\times\Biv_D$, and the bivariant Grothendieck construction \eqref{BIV_BICART_GROT} (in the version that provides a 2-equivalence) 
to exchange bivariant functors with bi-Cartesian fibrations:
\begin{align*} \int_{x:D}\Biv_D(\rm y^\mp_Dx,E) \quad 
   &\cong\quad  \int_{x:D}\biCart_D(\rm{free}^\mp_Dx, E)
\intertext{\indent Now, in terms of co-Cartesian fibrations over $D\times \biCart_D$:}
\alignhead{Representable bi-Cartesian fibration evaluation functor}
  \int_{x:D,\, E:\biCart_D}\biCart_D(\rm{free}^\mp_Dx, E) \quad &
    \to \quad  \left(\int_{x:D,\, E:\biCart_D} \biCart_D(\rm{free}^\mp_Dx, E) \right) \times_D D^{\Lambda} 
 \\
   & \acong \quad \left(\int_{x:D,\, E:\biCart_D} \biCart_D(\rm{free}^\mp_Dx, E) \right) \times_D \left( \int_{x:D}\rm{free}^\mp x \right) 
 \\
  &\rightarrow\quad \left(\int_{X,\, E:\biCart_D} \biCart_D(X, E) \right) \times_D \left( \int_{X:\biCart_D}X \right)  
\\
  &\stackrel{\rm{ev}}{\to}\quad \int_{E:\biCart_D} E;
\end{align*}
the last line is the restriction of the universal evaluation map \eqref{CAT_EVAL_REL} to the subcategory of bi-Cartesian fibrations.
Inspecting the formulas, for fixed $x$ and $E$ this map can be identified with evaluation $\phi\mapsto \phi(x)$ at the tautological element $x:\rm{free}^\mp_Dx$.
\end{para}


\begin{lemma}[Bivariant Yoneda lemma]
\label{BIV_YON_LEMMA}

The bivariant Yoneda evaluation mapping is an equivalence of bivariant functors.
\end{lemma}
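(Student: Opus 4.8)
The plan is to reduce the claim, via the Grothendieck construction, to the freeness of the representable bi-Cartesian fibration, and then to upgrade from a fiberwise equivalence to an equivalence of bivariant functors by extensionality. Concretely, by extensionality of $n$-natural transformations (Lemma \ref{CAT_TENS_EXT}) it is enough to show that for each fixed $x:D$ and $H:\Biv_D$ the component
\[ \rm{ev}_{x,H}:\Biv_D(\rm y^\mp_Dx,H)\longrightarrow H(x) \]
is an equivalence of categories; the naturality in $H$ and the bivariance in $x$ are then automatically promoted to the statement for bivariant functors.

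To analyse a single component I would translate the left-hand side into the world of fibrations. Using the bivariant Grothendieck construction in its 2-equivalence form (Proposition \ref{BIV_BICART_GROT}, enhanced through the adjoint enrichment of \ref{CAT_GROT_STATUS}), write $F=\int_DH$ for the bi-Cartesian fibration with base change classifying $H$; recalling from \ref{BIV_YON_EVAL} that $\rm y^\mp_Dx$ is classified by the representable fibration $\rm{free}^\mp_Dx$, this produces an equivalence $\Biv_D(\rm y^\mp_Dx,H)\cong\biCart_D(\rm{free}^\mp_Dx,F)$. The next step is freeness. By the identification $\rm{free}^\mp_Dx=(D\downarrow^\sharp x)\downarrow D$ of \ref{BIV_YON_FREE}, this fibration is the free co-Cartesian fibration on the free $S_D$-Cartesian fibration $D\downarrow^\sharp x$ on $\{x\}$; composing the two freeness adjunctions of Propositions \ref{BIV_BICART_FREE} and \ref{CAT_FIB_FREE} exhibits it as $2$-free bi-Cartesian with base change on $\{x\}\hookrightarrow D$, so that restriction along the tautological object is an equivalence $\biCart_D(\rm{free}^\mp_Dx,F)\cong F_x$. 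Finally, the behaviour of Grothendieck integration on fibers (\ref{CAT_GROT_PROPS}, \emph{Objects}) identifies $F_x\cong H(x)$.

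These three equivalences compose to $\rm{ev}_{x,H}$: the evaluation map was built in \ref{BIV_YON_EVAL} by feeding the tautological element $x:\rm{free}^\mp_Dx$ into the universal evaluation map, and the freeness equivalence is, by definition, evaluation on precisely that element. Hence each component is an equivalence of categories, and Lemma \ref{CAT_TENS_EXT} closes the argument.

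The step I expect to be the main obstacle is not the fiberwise equivalence itself — once freeness is in hand it is essentially forced — but keeping naturality honest across the handedness of the two variables. Because $x$ enters through the op-Yoneda embedding into $\Biv_D^\op$, hence contravariantly in the first argument of the hom, the evaluation map is a morphism of \emph{twisted} bi-Cartesian fibrations over $D\times\biCart_D$ rather than a plain natural transformation into $1\Cat$, and the Grothendieck translation of the second paragraph must be carried out compatibly with this twisting. This is the same phenomenon managed in the unmarked case \ref{CAT_EVAL_YON}, and it is discharged using the twisted fibre transport functor of \S\ref{BIV_TWIST}; the bookkeeping of the markings and of base/collar change through that translation is where the care is required.
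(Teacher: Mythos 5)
Your proposal is correct and follows essentially the same route as the paper: the paper's proof likewise reduces to fixed $x$ and $H$, identifies the component of the evaluation map with evaluation at the identity span (this identification having been built into the construction of \ref{BIV_YON_EVAL} via the bivariant Grothendieck construction and the twisted-fibration bookkeeping you describe), and concludes by the 2-free property of $D^\Lambda$ from Prop.~\ref{BIV_BICART_FREE}. Your write-up merely makes explicit the chain of equivalences and the pointwise-to-natural upgrade that the paper leaves implicit.
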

\begin{proof}
For fixed $x$, $H$, the evaluation map is isomorphic to the expected evaluation on the identity span at $x$. Now apply the 2-free property of $D^\Lambda$, Prop.~\ref{BIV_BICART_FREE}.
\end{proof}

\begin{defn}[Representable bivariant functors]
\label{BIV_YON_REP}

A bivariant functor $D\rightarrow1\Cat$ is said to be \emph{representable} if it is isomorphic to one of the form $\Cor_\rm D(x,-)$. The \emph{Yoneda image} $\rm y^\mp(D)$ of $D$ is the full 2-subcategory of $\Biv_D$ spanned by the representable bivariant functors.
\end{defn}

\begin{remark}

The dual notion of universal cospan yields a universal family of bivariant functors with the opposite parities as follows:
\[ x:D\ (\pm,\ell),\ y:D^\op\ (\mp,r) \quad \return_{1\Cat} \quad 
    \rm{Bord}_{D\sep S_D}(x,y):1\Cat; \]
here $\rm{Bord}_{D\sep S_D}(x,y)$ is the category of marked functors $\Lambda^\op\rightarrow D$ with endpoints at $x$, $y$.
\end{remark}

\subsection{Local representation of spans}
\label{BIV_LOC}

A bivariant functor $D\rightarrow K$ induces, for each $x,y:D$, a 1-functor $\Cor_D(x,y)\rightarrow K(Hx,Hy)$ that sends $(p,q)$ to $q_!p^!$. It is this \emph{local representation} of spans in $D$ on morphisms in $K$ to which we now turn.

\begin{para}[Local representation of spans]
\label{BIV_LOC_DEF}

Let $H:D\rightarrow K$ be bivariant. For each $\alpha:K$ we can define a new $1\Cat$-valued bivariant functor $H^\dagger\alpha:\Biv_D$ by the formula
\begin{align*}
&& y:D\ (\pm,r),\ H:\Biv(D,K),\ \alpha:K \quad &
    \return\quad K(\alpha, Hy)
    \intertext{whence}
\alignhead{Bivariant Yoneda pullback}
&& H:\Biv(D,K),\ \alpha:K \quad &
    \return\quad H^\dagger\alpha:\Biv_D. \mskip120mu
 \end{align*}
Fixing $x:D$, there is now a canonical element $\iden_{Hx}:H^\dagger Hx$ corresponding to the identity of $K(Hx,Hx)$. Passing this through the bivariant Yoneda isomorphism \ref{BIV_YON_LEMMA}:
\[ H^\dagger Hx(x) \cong \Biv_D( \rm y^\mp_Dx, H^\dagger Hx) \] 
then yields a \emph{local representation of spans}
\begin{align*}
\alignhead{Local representation of spans}
 y: D\ (\pm), \quad
    \return &\quad H^\dagger Hx (y) \\
   & \quad H_!:\Cor_D(x,-)\rightarrow K(Hx,H-) 
\end{align*}

Let us calculate the value of $H_!$ on a fixed span $(p,q)$ based at $x:D$. By the definition \eqref{BIV_YON_EVAL} of the Yoneda evaluation map, $H_!$ corresponds via Grothendieck integration to the unique map of bi-Cartesian fibrations
  \[  \rm{free}^\mp_D(x) \rightarrow \int_{y:D}K(Hx,Hy) \]
that sends the canonical element $x:\rm{free}^\mp_D(x)$ to $\iden_{Hx}:\int_{y:D}K(Hx,Hy)$. Hence, its value at a span $[x\stackrel{p}{\leftarrow} w\stackrel{q}{\rightarrow} y]$ is calculated by pulling $\iden_{Hx}$ along $p$ then pushing along $q$. Now, because this fibration arises by Grothendieck integration of a functor, pushforward and pullback along a morphism $r$ are computed by the image of $r$ and its right adjoint with respect to that functor. In $\int_y K(Hx,Hy)$ these are the functors of postcomposition with $H(r)=r_!$, resp.~with its right adjoint $r^!$ .
\[\xymatrix{
	K(Hx,Hx) \ar[r]^-{p^!\circ-}\ar@{|->}[d] 	
		& K(Hx,Hw) \ar[r]^-{q_!\circ-} \ar@{|->}[d] 
		& K(Hx,Hy) \ar@{|->}[d] 
	\\
	x	& w \ar[l]_p \ar[r]^q	& y
}\] 
Applying this to the canonical element $x$, we find $H_!(p,q)=\iden_x\circ q_!\circ p^!$. Hence we are justified in making $H_!$ anonymous with the sentence:
  \begin{align*}
\alignhead{Local representation of spans (anonymous format)}
 y: D\ (\pm) \quad &
    \return_{1\Cat}\quad (p,q):\Cor_D(x,y)\quad\return_{1\Cat} \quad  q_!p^!:K(Hx,Hy).
\end{align*}\end{para}

\begin{para}[Functoriality in $K$ of the local representation]
\label{BIV_LOC_FUNC}

The construction of the local representation can be made to play well with a 2-functor $G:K\rightarrow K'$:
\begin{enumerate}
\item $G$ induces a natural transformation of bifunctors $K(H-,H-)\rightarrow K^\prime(GH-,GH-)$ and hence a bivariant natural transformation $H^\dagger Hx\rightarrow (GH)^\dagger GHx$.
\item Using the naturality of Yoneda evaluation \eqref{BIV_YON_EVAL}, postcomposition with $G$
\[  \Biv(\rm y^\mp_D x, H) \rightarrow \Biv(\rm y^\mp_D x, GH) \]
sends $H_!$ to $(GH)_!$, i.e.~we get a commuting triangle
\[\xymatrix{
  \Cor_D(x,y) \ar[r]\ar[dr] & K(Hx,Hy)\ar[d] \\ & K'(GHx,GHy)
}\]
functorial in $y$.
\end{enumerate} 
It is possible to say even more: for each $x,y:D$ the local representation forms a cone over the 1-functor 
\[ \bf{Biv}_D\rightarrow 1\Cat, \qquad [H:D\rightarrow K]\return K(Hx,Hy) \] 
where $\bf{Biv}_D$, as in \ref{BIV_UNIV_1CAT}, stands for the 1-category of bivariant extensions of $D$. As the derivation is a little involved, I omit a complete formulation of this last statement.
\end{para}

\begin{defn}[Correspondences]
\label{BIV_LOC_CORR}

Let $D:1\Cat^+$. A bivariant functor $H:D\rightarrow  K$ is said to exhibit $K$ as a \emph{2-category of correspondences of $D$} if it is essentially surjective and induces, via \ref{BIV_LOC_DEF}, an equivalence of categories \[ \Cor_D(x,y)\tilde\rightarrow K(Hx,Hy)\]for each $x,y:D$, or what is the same thing, an equivalence of $1\Cat$-valued bifunctors in $x,y$.

If $\Cor_D:\bf{Biv}_D$ is a category of correspondences for $D$, then the expression $\Cor_D(x,y)$ may then unambiguously be parsed either following \ref{BIV_YON_SPANS} or as the category of morphisms from $x$ to $y$ in $\Cor_D$. Hence, denoting any fixed category of correspondences as $\Cor_D$ cannot cause confusion in this way.
\end{defn}

\begin{remark}[Segal 2-category of spans]
We expect that the Segal 2-category $\rm{Span}_{D,\bullet}$ defined in \cite[\S12]{HigherSegal} is a 2-category of correspondences for $D$. To check this, we must calculate the induced representation
\[ \Cor_D(x,y) \rightarrow \rm{Span}_D(x,y) \]
which we expect to be the identity (these two categories are, in fact, defined in exactly the same way). The induced representation is easiest to characterise as a 1-natural transformation in $y$. Unfortunately, the construction of these mapping categories as a 1-functor of the codomain is not so elementary in the Segal model, and so I will not attempt a rigorous calculation. (On the other hand, if we accept \cite[Thm.~8.1.1.9]{GR}, the claim is an a posteriori consequence of the universal property of $\rm{Span}_{D,\bullet}$.)
\end{remark}

\begin{cor}
\label{BIV_LOC_IMAGE}

The bivariant Yoneda image is a 2-category of correspondences.\end{cor}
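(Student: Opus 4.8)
The plan is to take the bivariant op-Yoneda embedding of \ref{BIV_YON_FUNCTOR} as the witnessing functor and to extract both clauses of Definition \ref{BIV_LOC_CORR} from the bivariant Yoneda lemma \ref{BIV_YON_LEMMA}. To make this literal, I regard the Yoneda image $\rm y^\mp(D)$ as the full $2$-subcategory of $\Biv_D^\op$ spanned by the representables; then the op-Yoneda embedding becomes a genuine covariant, $(\pm,r)$-bivariant functor $H:=\rm y^\mp_D:D\to\rm y^\mp(D)$, and the mapping category from $Hx$ to $Hy$ unwinds to $\Biv_D(\rm y^\mp_D y,\rm y^\mp_D x)$. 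Getting this orientation right is the one bookkeeping point that needs attention: it is exactly what turns $\Cor_D(y,x)$ into $\Cor_D(x,y)$ for morphisms directed from $x$ to $y$.

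Essential surjectivity of $H$ is then immediate from Definition \ref{BIV_YON_REP}, since the objects of $\rm y^\mp(D)$ are by fiat the representables $\Cor_D(x,-)=\rm y^\mp_D x$, i.e.\ the essential image of $H$. For the mapping categories I would compute the functor $H^\dagger Hx$ of \ref{BIV_LOC_DEF} directly: it is the bivariant functor $y\mapsto\rm y^\mp(D)(Hx,Hy)=\Biv_D(\rm y^\mp_D y,\rm y^\mp_D x)$. Applying Lemma \ref{BIV_YON_LEMMA} with the \emph{fixed} representable $F=\rm y^\mp_D x$ gives an equivalence of bivariant functors $\Xi:H^\dagger Hx\xrightarrow{\sim}\Cor_D(x,-)$ whose component at $y$ is evaluation $\phi\mapsto\phi(\iden_y)$.

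It remains to see that the local representation $H_!:\Cor_D(x,-)\to H^\dagger Hx$ is an equivalence. By its construction in \ref{BIV_LOC_DEF}, $H_!$ is the bivariant transformation corresponding, under the Yoneda isomorphism $(H^\dagger Hx)(x)\cong\Biv_D(\rm y^\mp_D x,H^\dagger Hx)$, to the canonical element $\iden_{Hx}$, so that its component at $x$ sends $\iden_x$ to $\iden_{Hx}$. Composing with $\Xi$ and evaluating at $x$ then gives $(\Xi\circ H_!)(\iden_x)=\Xi_x(\iden_{Hx})=\iden_x$, the last step because $\Xi_x$ is Yoneda evaluation at $\iden_x$ and $\iden_{Hx}$ is the identity transformation of $\rm y^\mp_D x$. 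By the Yoneda lemma $\Xi\circ H_!$ is therefore the identity of $\rm y^\mp_D x$, and since $\Xi$ is an equivalence so is $H_!$, for every $x$. This furnishes the equivalence $\Cor_D(x,y)\xrightarrow{\sim}\rm y^\mp(D)(Hx,Hy)$ required by Definition \ref{BIV_LOC_CORR}, and by the computation in \ref{BIV_LOC_DEF} it is exactly $(p,q)\mapsto q_!p^!$.

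The argument involves no hard estimates; the only real content is the coherent matching of the two a priori distinct maps $\Xi$ and $H_!$, which run in opposite directions between $\Cor_D(x,-)$ and $H^\dagger Hx$. I expect this to be the main (and essentially the only) obstacle: one must confirm that they are genuinely mutually inverse rather than merely both equivalences. This is forced by the observation that both are manufactured from the same bivariant Yoneda isomorphism and the same canonical identity element $\iden_{Hx}$, so the check reduces to evaluation on the tautological span $\iden_x$. Because Definition \ref{BIV_LOC_CORR} permits verifying the equivalence object-wise in $x$ and $y$, no further naturality argument is needed to conclude.
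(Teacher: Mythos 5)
Your proof is correct and follows essentially the same route as the paper, whose entire argument is to apply the bivariant Yoneda lemma \ref{BIV_YON_LEMMA} to the representable functors $\Cor_D(x,-)$. The extra work you do --- checking via evaluation on $\iden_x$ that the Yoneda equivalence $\Xi$ and the local representation $H_!$ of \ref{BIV_LOC_DEF} are mutually inverse, so that the equivalence on mapping categories is the one Definition \ref{BIV_LOC_CORR} actually demands --- is exactly the detail the paper's one-line proof leaves implicit, and it is carried out correctly.
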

\begin{proof}
Apply the bivariant Yoneda lemma \ref{BIV_YON_LEMMA} to $H=\Cor_D(-,y)$ for each $y:D$.
\end{proof}

\begin{eg}[Functors between correspondence categories]
\label{BIV_LOC_EX}

Let $F:D\rightarrow E$ be a marked functor with base change, and let $\Cor_E$ be a category of correspondences for $E$. Then $D\rightarrow\Cor_E$ is bivariant, and so for each $x,y:D$ the construction \ref{BIV_LOC_DEF} provides a 1-functor
\[ \Cor_D(x,y)\rightarrow\Cor_E(Fx,Fy).\]
By construction, this is the fibre over $y$ of the unique map of bi-Cartesian fibrations
\[\xymatrix{
\{x\}\times_DD^\Lambda \ar[r]\ar[d] & \{Fx\}\times_EE^\Lambda \ar[d] \\
D\ar[r] & E
}\]
sending $x$ to $Fx$. By unicity, this can be identified with the map induced by functoriality of $(-)^\Lambda$, which is exactly the functoriality of correspondences one might expect.
\end{eg}

\subsection{Spans on a bivariant functor}\label{BIV_EXT}

The inclusion $D\subseteq\rm y^\mp(D)$ induces, by \ref{BIV_FUN_COMP}, a functor of restriction $2\Fun(\rm y^\mp(D),K)\rightarrow\Biv(D,K)$.
The goal of this section is to describe a right inverse to this restriction, the functor of \emph{span extension}. This extension is a `global' analogue of the representation of spans obtained in \ref{BIV_LOC}.

\begin{thm}[Extension]
\label{BIV_EXT_THM}

Let $D:1\Cat^{+r},~K:2\Cat$. There is a 2-functor 
\[ \Spanext:\Biv(D,K) \underset{2\Cat}{\rightarrow} 2\Fun(\rm y^\mp(D),K) \]
of \emph{span extension} to the Yoneda image. Its underlying 1-functor is a section of the 1-functor of restriction along $D\subseteq \rm y^\mp(D)$:
\[ H:\Biv(D,K) \quad \return_{1\Cat} \quad \Spanext(H)\circ \rm y^\mp_D \cong H. \] 
For each $x$, $y:D$, and $H:\Biv(D,K)$, $\Spanext(H):\rm y^\mp(D)(\rm y^\mp_Dx,\rm y^\mp_Dy)\rightarrow K(Hx,Hy)$ is equivalent, via the bivariant Yoneda lemma, to the local representation $H_!$ induced by $H$.
\end{thm}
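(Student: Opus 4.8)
The plan is to build $\Spanext(H)$ in one stroke as a corepresentable $2$-functor, so that every coherence --- associativity, units, and above all the base change relations that make the $H_!$ compose --- is supplied automatically by the bivariant Yoneda lemma \ref{BIV_YON_LEMMA}, rather than checked by hand. First I would repackage the contravariant hom of $K$ against $H$. For $\alpha:K$ the composite $K(\alpha,-)\circ H$ is, by the composition principle \ref{BIV_FUN_COMP} applied to the $2$-functor $K(\alpha,-):K\to 1\Cat$, a right bivariant functor with base change; by the enriched mapping bifunctor \ref{CAT_TENS_MAP} it is $2$-functorial and contravariant in $\alpha$, and it is $2$-functorial in $H$ by the composition properties \ref{BIV_FUN_COMP}. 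This produces a $2$-functor
\[ H^\dagger:K^\op\longrightarrow\Biv_D,\qquad \alpha\longmapsto K(\alpha,-)\circ H. \]
It is precisely here that the base change hypothesis is consumed: it is what places $H^\dagger\alpha$ inside $\Biv_D=\Biv^{\pm r}_D$, where \ref{BIV_YON_LEMMA} applies. I would then take $\rm y^\mp(D)$ in the variance of \ref{BIV_LOC_IMAGE}, i.e.\ as the full sub-$2$-category of $\Biv_D^\op$ on the representables, so that its mapping categories are the $\Cor_D(x,y)$, and form the $2$-functor
\[ \tilde\Phi_H:\rm y^\mp(D)\longrightarrow 2\Fun(K^\op,1\Cat)=1\PSh(K),\qquad F\longmapsto\big[\alpha\mapsto\Biv_D(F,H^\dagger\alpha)\big], \]
which is the co-Yoneda $2$-functor of $\Biv_D^\op$ (\ref{CAT_TENS_YON}) post-composed with $H^\dagger$.

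Evaluating $\tilde\Phi_H$ on a representable $F=\rm y^\mp_Dx=\Cor_D(x,-)$ and invoking the bivariant Yoneda lemma gives a natural equivalence $\tilde\Phi_H(\rm y^\mp_Dx)(\alpha)\cong H^\dagger\alpha(x)=K(\alpha,Hx)$, that is $\tilde\Phi_H(\rm y^\mp_Dx)\cong\rm y_K(Hx)$, where $\rm y_K:K\hookrightarrow 1\PSh(K)$ is the $2$-fully faithful $2$-Yoneda embedding \ref{CAT_TENS_YON}. Thus $\tilde\Phi_H$ carries every object of $\rm y^\mp(D)$ into the essential image of $\rm y_K$. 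Since post-composition with a $2$-fully faithful functor is again $2$-fully faithful, there is an essentially unique $2$-functor $\Spanext(H):=\Phi_H:\rm y^\mp(D)\to K$ with $\rm y_K\circ\Phi_H\cong\tilde\Phi_H$; the $2$-functoriality of $H\mapsto\tilde\Phi_H$ transports through this lift to give $\Spanext:\Biv(D,K)\to 2\Fun(\rm y^\mp(D),K)$. The section property falls out immediately: the displayed equivalence reads $\tilde\Phi_H\circ\rm y^\mp_D\cong\rm y_K\circ H$, so $\rm y_K\circ(\Phi_H\circ\rm y^\mp_D)\cong\rm y_K\circ H$, and reflecting along the $2$-fully faithful $\rm y_K$ yields $\Phi_H\circ\rm y^\mp_D\cong H$.

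It remains to match $\Phi_H$ on mapping categories with the local representation. By $2$-full faithfulness of $\rm y_K$, the functor $\Phi_H:\rm y^\mp(D)(\rm y^\mp_Dx,\rm y^\mp_Dy)\to K(Hx,Hy)$ is identified with the action of $\tilde\Phi_H$ on mapping categories followed by $1\PSh(K)(\rm y_KHx,\rm y_KHy)\cong K(Hx,Hy)$. Under the bivariant Yoneda identification $\rm y^\mp(D)(\rm y^\mp_Dx,\rm y^\mp_Dy)\cong\Cor_D(x,y)$, a span $(p,q)$ corresponds to a bivariant transformation $\Cor_D(y,-)\to\Cor_D(x,-)$; tracing it through the definition of $\tilde\Phi_H$, built from the enriched mapping bifunctor \ref{CAT_TENS_YON_NAT}, produces the morphism $Hx\to Hy$ classified by whiskering $H^\dagger$, and unwinding the Yoneda equivalences exactly as in the derivation of $H_!$ in \ref{BIV_LOC_DEF} returns $q_!p^!$. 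This identification is the main obstacle. It is ultimately formal --- both the extension functor on homs and $H_!$ are manufactured from the \emph{same} equivalence of \ref{BIV_YON_LEMMA}, evaluated on the canonical element $\iden_{Hx}$ --- but making it precise demands careful bookkeeping of the several opposites and of the naturality clauses of the bivariant Yoneda lemma. The conceptual pay-off of routing through $\rm y_K$ is that the compatibility of $H_!$ with span composition, equivalently the base change formula, is never verified directly: it is forced by the fact that $\tilde\Phi_H$, and hence $\Phi_H$, is a genuine $2$-functor.
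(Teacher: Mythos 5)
Your construction is essentially the paper's own proof: the paper likewise forms $H^\dagger\alpha = K(\alpha,H-)$, then $F \return \Biv_D(F,H^\dagger(-))$, applies the bivariant Yoneda lemma \ref{BIV_YON_LEMMA} to see that representables land in the image of the 2-Yoneda embedding of $K$, lifts along that 2-fully faithful embedding to get the section of restriction along $\rm y^\mp_D$, and identifies the action on mapping categories with $H_!$ via its characterisation as the unique bivariant transformation $\Cor_D(x,-)\rightarrow K(Hx,H-)$ sending $\iden_x$ to $\iden_{Hx}$. The only cosmetic differences are that the paper defines $H_*$ on all of $\Biv_D^\op$ before restricting to $\rm y^\mp(D)$, and its final ``local picture'' step (\ref{BIV_EXT_LOC}) replaces your element-tracing with precisely the uniqueness argument you gesture at, using 2-naturality of the action on mapping objects (\ref{CAT_TENS_YON_NAT}) plus \ref{BIV_FUN_COMP} to see that the restricted transformation is bivariant and preserves identities.
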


\noindent Of course, $\Spanext$ should be a section of the 2-functor of restriction, and indeed in the conditional context of \S\ref{UNIV} this is a consequence of the universal property (Thm.~\ref{UNIV_EXT_THM}). The stated version is all I could prove unconditionally. The bottleneck is that we only have defined the bivariant Yoneda evaluation map \eqref{BIV_YON_EVAL} as a 1-natural transformation.

Before proceeding to the proof, record a corollary:

\begin{cor}[Weak unicity of correspondences]\label{BIV_EXT_UNIQUE}
Let $D\rightarrow K$ be a 2-category of correspondences of $ D$. There is a 2-equivalence $K\cong \rm y^\mp(D)$ under $D$.
\end{cor}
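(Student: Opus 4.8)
The plan is to let the span extension serve as the equivalence itself. Writing $H:\Biv(D,K)$ for the given functor exhibiting $K$ as a category of correspondences, Theorem \ref{BIV_EXT_THM} supplies a 2-functor $\Spanext(H):\rm y^\mp(D)\rightarrow K$ together with a natural isomorphism $\Spanext(H)\circ\rm y^\mp_D\cong H$. This isomorphism is already the compatibility under $D$ that the statement demands, so everything reduces to proving that $\Spanext(H)$ is a 2-equivalence. I would establish this through the recognition principle for 2-functors: such a functor is an equivalence precisely when it is essentially surjective on objects and induces an equivalence of 1-categories on each mapping object.

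First I would dispatch essential surjectivity. By Definition \ref{BIV_YON_REP} every object of the Yoneda image is equivalent to $\rm y^\mp_D x$ for some $x:D$, and the section identity gives $\Spanext(H)(\rm y^\mp_D x)\cong Hx$; hence the essential image of $\Spanext(H)$ agrees with that of $H$. Since the hypothesis that $H$ exhibits $K$ as a 2-category of correspondences includes the essential surjectivity of $H$ (Definition \ref{BIV_LOC_CORR}), this is immediate.

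For the local condition it is enough to treat mapping categories between objects of the form $\rm y^\mp_D x$ and $\rm y^\mp_D y$. Here Theorem \ref{BIV_EXT_THM} identifies the induced functor
\[ \rm y^\mp(D)(\rm y^\mp_D x,\rm y^\mp_D y)\rightarrow K(Hx,Hy), \]
via the bivariant Yoneda lemma and the identification of the source with $\Cor_D(x,y)$ from Corollary \ref{BIV_LOC_IMAGE}, with the local representation $H_!:\Cor_D(x,y)\rightarrow K(Hx,Hy)$. But the correspondence hypothesis on $H$ says exactly that each $H_!$ is an equivalence (Definition \ref{BIV_LOC_CORR}). So $\Spanext(H)$ is an equivalence on all mapping categories, and with essential surjectivity it is the desired 2-equivalence under $D$.

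The only point requiring genuine care, rather than bookkeeping, is the entitlement to the recognition principle in our homotopy-invariant framework. I would justify it by building a candidate pseudo-inverse objectwise from the local inverse equivalences and the chosen preimages, and then appealing to extensionality of $n$-natural transformations (Lemma \ref{CAT_TENS_EXT}) to upgrade the objectwise-invertible comparison transformations into honest 2-natural isomorphisms. Apart from this, the corollary is a direct read-off of Theorem \ref{BIV_EXT_THM} and the defining properties of a 2-category of correspondences.
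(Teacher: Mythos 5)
Your proposal is correct and follows essentially the same route as the paper: apply Theorem \ref{BIV_EXT_THM} to the given functor $H$, observe that the extension acts on mapping categories by the local representation $H_!$, and invoke the defining properties of a 2-category of correspondences (Definition \ref{BIV_LOC_CORR}) to conclude that $\Spanext(H)$ is essentially surjective and an equivalence on mapping categories, hence a 2-equivalence under $D$. The paper leaves the ``fully faithful plus essentially surjective implies 2-equivalence'' step implicit, so your closing remark on justifying the recognition principle is a reasonable elaboration rather than a divergence.
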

\begin{proof}
Apply Theorem \ref{BIV_EXT_THM} to obtain an extension $\rm y^\mp(D)\rightarrow K$ that induces the local representation of spans on each mapping space. Since $K$ is a category of correspondences, this functor is fully faithful and essentially surjective.
\end{proof}

\begin{proof}[Proof of Thm.~\ref{BIV_EXT_THM}] The construction occupies the remainder of this section.

\begin{para}[Outline of construction]\label{BIV_EXT_OUTLINE}Starting from a bivariant functor $H:D\rightarrow K$, we will construct a diagram, natural in $H$:
\begin{align*}
\alignhead{Span extension square}
\xymatrix{
  D \ar[d]_{\rm y^\pm} \ar[r]^H 
    \ar@{}[dr]|(.3)\Leftarrow|(.7)\Leftarrow &
    K \ar[d]^{\rm y^2} \ar[dl]|{H^\dagger} \\ 
  \Biv^\op_D \ar[r]_-{H_*} &
    1\PSh K
}\end{align*}
where $H^\dagger$ is obtained by restriction along $H$, and $H_*$ by restriction along $H^\dagger$. More precisely, the induced functors $H^\dagger,H_*$ are defined by formulas 
\begin{align}
\label{EQN_ADJ} \alpha:K,\ \beta:D \qquad H^\dagger\alpha(\beta) &
  = K(\alpha,H\beta) \\
\label{EQN_FWD} F:\Biv_D^\op,\ \alpha:K \qquad H_*F(\alpha) &
  = \Biv_D(F,H^\dagger\alpha).
\end{align}
The composite 2-cell that intertwines the morphisms of the outer square can be identified with an inverse to the Yoneda evaluation mapping \eqref{BIV_YON_EVAL}. By the Yoneda lemma, the outer square is therefore commutative. (We don't actually prove that the Yoneda intertwiner is the composite of the two 2-cells pictured; all we need is the commutativity of the outer square.) 

In particular, $H_*$ maps $\rm y^\mp(D)$ into $K\subset 1\PSh K$. Hence we get an extension mapping 
\[ \rm{Span}:\Biv(D,K) \underset{2\Cat}{\rightarrow} 2\Fun\left(\Biv_D^\op,1\PSh K\right) \] 
which, by the 2-Yoneda lemma for $K$ \eqref{CAT_TENS_YON}, factors through the full subcategory of functors that send the objects of $D$ into $K$. Composing with the restriction 
\[ 2\Fun(D,K)\times_{2\Fun\left(D,1\PSh K\right)} 2\Fun\left(\Biv_D^\op,1\PSh K\right) \longrightarrow 2\Fun(\rm y^\mp(D),K),\]
we obtain the desired extension functor. The fact that this is a right inverse follows from the commutativity 2-cell of the span extension square.\end{para}

\begin{remark}[Opposite version]The same argument could be carried out equally with the covariant Yoneda mapping of $D$: \[\xymatrix{ D\ar[d]_{\rm y}\ar@{}[dr]|(.3)\Rightarrow|(.7)\Rightarrow \ar[r]^H & K\ar[d]^{\rm y}\ar[dl]|{H^\dagger} \\ \Biv_D\ar[r]_-{H_*} & 1\Fun(K,1\Cat)^\op }\] defined by the formulas $H^\dagger\alpha(\beta):= K(H\beta,\alpha)$ and $H_*G(\alpha)=\Biv_D(G,H^\dagger\alpha)$.\end{remark}

\begin{remark}[Natural version]It is somewhat perverse that this argument involves extending $H$ to a morphism from a completion into a cocompletion. It is also possible to approach this using a completion (or cocompletion) on both sides, that is, replacing the bottom-left term by $2\Fun(K,1\Cat)^\op$. In this formulation, the bottom arrow would be a left adjoint \[H_!:\Biv_D\rightarrow2\Fun(K,1\Cat) \] to the restriction functor. 

A typical approach to constructing such a diagram would be as follows:
\begin{enumerate}\item The restriction functor into $\Biv_\rm D\subseteq 1\Cat^\rm D$ preserves weighted 2-limits and 2-colimits, since this inclusion is closed under weighted 2-limits and 2-colimits.
\item By an adjoint 2-functor theorem and the fact that $1\Cat^K$ is \emph{2-presentable} (whatever this means, 1-presheaf categories should be an example), it admits a left adjoint.
\item The commutativity of the outer square follows again from the bivariant and the 2-Yoneda lemmata.\end{enumerate}
This approach may be more natural in the sense that it fits into a general covariance pattern of 2-presheaves, but it also entails additional foundations beyond what we currently have available in 2-category theory --- particularly, an adjoint 2-functor theorem --- so I have not attempted to flesh out the details.

This machinery would also help in making the construction natural in $K$.
\end{remark}

\begin{para}[Yoneda adjoint]
\label{BIV_EXT_BACK}

The 2-functor of \emph{Yoneda pullback} 
\[ (-)^\dagger:\rm{Biv}(D,K) \underset{2\Cat}{\rightarrow} 2\Fun(K,\rm{Biv}_D^\op)^\op \]
 with the formula \eqref{EQN_ADJ} is obtained by repeated currying:
\begin{align*} 
	&& \beta:D~(\pm^r),~ \alpha:K^\op,~ H : \Biv(D,K) \quad 
		&\return  \quad K(\alpha,H\beta) :1\Cat  
\\
\Rightarrow	&& \alpha: K^\op,~ H:\Biv(D,K) \quad 
		&\return\quad H^\dagger\alpha:\Biv_D 
\\
\Rightarrow &&  H:\Biv(D, K) \quad 
		& \return \quad H^\dagger: K^\op\underset{2\Cat}{\rightarrow}\Biv_D
\\
\Rightarrow &&  H:\Biv(D, K) \quad 
		& \return \quad H^\dagger:2\Fun(K,\rm{Biv}_D^\op)^\op. &\mathstrut
\end{align*}
All formulas in this list are 2-functors. Some remarks on how they are obtained:
\begin{itemize}
\item The first line is the composite of the bivariant evaluation 2-functor \eqref{BIV_SYNTAX} with the mapping space 2-functor \eqref{CAT_TENS_MAP}.
\item The second and third lines are derived by bivariant currying followed by usual 2-categorical currying \eqref{BIV_SYNTAX}.
\item The last line passes through the 2-equivalence $2\Fun(K^{\op_1},\Biv_D)\cong 2\Fun\left(K,\Biv_D^{\op_1}\right)^{\op_1}$ which is got by following the chain of equivalences of spaces
\small
\[\xymatrix{
  2\Fun(-, 2\Fun(K^\op, \Biv_D)) \ar@{=}[d]\ar@{-->}[r] & 
    2\Fun\left((-)^\op, 2\Fun(K,\Biv_D^\op) \right) \ar@{=}[d] \ar@{=}[r] &
      2\Fun\left( -, 2\Fun\left(K, \Biv_D^\op\right)^\op \right) 
\\
  2\Fun(- \times K^\op,\Biv_D) \ar@{=}[r] & 
    2\Fun((-)^\op \times K, \Biv_D^\op)
}\]
\normalsize 
where the vertical identifications are from Cartesian closure of $2\Cat$ and the horizontal ones are the action of $\op_1$ as a  1-autoequivalence of $2\Cat$.
\end{itemize}
\end{para}

\begin{remark}[2-categorical Kan property]
\label{BIV_EXT_KAN}

The local representation of spans \ref{BIV_LOC_DEF} defines a 1-natural transformation $\hat\epsilon_H:H^\dagger H\rightarrow\rm y_D^\mp$, to wit:
\begin{align*} 
	&&   \beta:D~(\pm),~ \alpha:D^\op,~H : \Biv(D,K) \quad 
		& \return_{1\Cat} \quad \Cor_D(\alpha,\beta)\underset{1\Cat}{\rightarrow} K(H\alpha,H\beta) && \relax
\\
\Rightarrow && \alpha:D^\op,~H : \Biv(D,K) \quad
		&\return_{1\Cat} \quad \rm y_D^\pm \alpha \underset{\Biv_D}{\longrightarrow} H^\dagger H\alpha 
\\
\Rightarrow && H : \Biv(D,K) \quad
		& \return_{1\Cat} \quad \hat\epsilon_H:2\Fun\left(D,\Biv_D^\op\right)\left(H^\dagger H,\rm y_D^\pm\right).
\end{align*}
This 2-cell ought to exhibit $H^\dagger$ as a 2-categorical left Kan extension of $\rm y^\mp$ along $H$, whatever that means. However, we don't need to actually invoke this formula directly.
\end{remark}

\begin{para}[Pushforward]
\label{BIV_EXT_FWD}

The second 2-functor \eqref{EQN_FWD} 
\[2\Fun\left(K,\Biv_D^\op\right)^\op\rightarrow 2\Fun\left(\Biv_D^\op,1\PSh K\right)\] 
is obtained by the same logic as \ref{BIV_EXT_BACK} applied to 
\begin{align*}
	&& \alpha:K^\op,~F:\Biv_D^\op,\ G:K^\op\rightarrow\Biv_D \quad 
  		&\return \quad  \Biv_D(F,G\alpha):1\Cat &&\mathstrut \\
\Rightarrow && F:\Biv_D^\op,\ G:K^\op\rightarrow\Biv_D \quad 
		&\return\quad G^\dagger F:K^\op\rightarrow1\Cat \\
\Rightarrow && G:K^\op\rightarrow\Biv_D \quad 
		&\return\quad G^\dagger:\Biv_D\rightarrow 1\PSh K 
\end{align*}
The inference rules used to derive these formulas are much the same as those invoked in \ref{BIV_EXT_BACK}. Write $H_*$ instead of $({}^\dagger H)^\dagger$.

Again, there is a 1-natural transformation $\hat e_G:\rm y^2_K\rightarrow G^\dagger G$ whose derivation, parallel to that of $\hat\epsilon_H$ (\ref{BIV_EXT_KAN}), I omit. \end{para}

\begin{para}[Commutativity 2-cell]

Composing \ref{BIV_EXT_BACK} with \ref{BIV_EXT_FWD} we obtain a 2-functor $\Spanext:\Biv(D,K)\rightarrow 2\Fun(\Biv_D^\op, 1\PSh(K))$ with the formula:
\begin{align*}
\alignhead{Span extension, (completed)}
&  H:\Biv(D,K) \quad\return_{2\Cat} \quad H_*:2\Fun(\Biv_D^\op,1\PSh K) \\ 
\Rightarrow\quad & H:\Biv(D,K) \quad\return_{2\Cat} \quad F:\Biv_D^\op,~ \alpha:K^\op \quad\return_{2\Cat} \quad \Biv_D(F,K(\alpha,H-))
\end{align*} 
Now, substituting $F$ via $\rm y^\mp_D$ and applying the bivariant Yoneda lemma \ref{BIV_YON_LEMMA}, we obtain:
\begin{align*}
\alignhead{Span extension, commutativity}
  H:\Biv(D,K) \quad\return_{1\Cat} \quad \beta:D~(\pm,r),~ \alpha:K^\op \quad\return_{2\Cat}\quad \rm{ev}:\Biv_D\left(\rm y^\mp_D\beta,\,K(\alpha,H-)\right) \cong K(\alpha, H\beta).
\end{align*}
where the reader will notice that the level of functoriality in $H$ has reduced to 1, because this is all that the Yoneda evaluation makes available. In particular, for each $H$ the restriction of $H_*$ to $\rm y^\mp(D)$ has image in $K$. In diagrammatic terms, this is providing us with the large commuting triangle in the diagram of 1-categories
\[\xymatrix{
  \Biv(D,K) \ar@{>->}[d]_{\rm y^2_K\circ-} \ar[drr]_(.3){\Spanext} &
    2\Fun(\rm y^\mp(D), K) \ar@{>->}[d]|(.48)\hole \ar[l]_-{-\circ\rm y^\mp_D}
     \\
  \Biv(D,1\PSh (K))
  & 2\Fun(\rm y^\mp(D),1\PSh(K)) \ar[l] &
    2\Fun(\Biv_D^\op, 1\PSh(K)) \ar[l]
}\]
wherein the left-hand square is a pullback, and so $\Spanext$ lifts uniquely to a section of $(-\circ\rm y^\mp_D)$. 
\end{para}

\begin{para}[Local picture]
\label{BIV_EXT_LOC}

It remains to compute the action of $H_*$ on mapping categories for fixed $x,y:D$. This proceeds by precomposing the 2-natural transformation
\[ z:\rm y^\mp(D) \quad \return_{2\Cat} \quad \rm y^\mp(D)(x,z) \underset{1\Cat}{\rightarrow} K(H_*x,H_*z) \]
(justified in \ref{CAT_TENS_YON_NAT}) with $\rm y^\mp_D$  to obtain \eqref{BIV_FUN_COMP} a bivariant natural transformation
\[ y:D\ (\pm,r) \quad \return \quad \rm y^\mp(D)(x,y) \underset{1\Cat}{\rightarrow} K(Hx,Hy) \]
that sends $\iden_x$ to $\iden_{Hx}$. Since the local representation of spans is uniquely characterised by this property, this completes the proof.
\qedhere
\end{para}
\end{proof}

\subsection{Omake: composition of correspondences}
\label{BIV_COMP}

In this section we relate composition of correspondences by taking fibre product of the kernels to composition in the image of a bivariant functor. 

In particular, if $K$ is a category of correspondences in the sense of Def.~\ref{BIV_LOC_CORR}, then under the resulting identification of $K^{\Delta^1}$ with $\rm{Span}_{D,1}$, composition in $K$ can be identified with the composition law of the Segal object $\rm{Span}_{D,\bullet}$ studied in \cite{HigherSegal,GR}.

\begin{para}[Local action by spans]
\label{BIV_LOC_ACT}

In the case $K=1\Cat$, we can curry the local representation of spans to obtain a local \emph{action}
\begin{align*}
\alignhead{Local action by spans}
  \alpha_H:\Cor_D(x,y)\times Hx\rightarrow Hy
\end{align*}  
for each $x,y:D$. We will compare its action on the Yoneda embedding with composition of spans and with composition in the image of a bivariant functor.
\end{para}

\begin{para}[Action of spans on themselves versus composition of spans]
\label{BIV_COMP_ACTION}

We begin by identifying composition of spans with the local representation of spans by the bivariant Yoneda embedding itself:
\begin{align*}
&& (q,p):\Cor_D(x,y) \quad 
	&\return\quad q_!p^!:\rm{y}^\mp_D y\rightarrow \rm{y}^\mp_D x 
\\
\Rightarrow && z:D~(\pm),~(q,p):\Cor_D(x,y) \quad
	&\return\quad q_!p^!:\Cor_D(y,z)\rightarrow\Cor_D(x,z) 
\\ 
\Rightarrow && (q^\prime,p^\prime):\Cor_D(y,z),~ (q,p):\Cor_D(x,y)\quad
	&\return\quad q_!p^!(q^\prime,p^\prime):\Cor_D(x,z)
\end{align*}
(note the arrow reversal in the first line because the target of $\rm y^\mp_D$ is $\Biv_D^\op$ rather than $\Biv_D$). We know --- cf.~the discussion in the proof of Lemma \ref{BIV_YON_UNIV} --- that $q_!p^!(q^\prime,p^\prime)$ is computed by the roof of the pullback diagram
\[\xymatrix{
&& \cdot \ar[dr]\ar[dl] \\
& \cdot \ar[dr]^q\ar[dl]_p && \cdot \ar[dr]^{q^\prime}\ar[dl]_{p^\prime} \\
x && y && z
}\]
so that $q_!p^!(p',q')$ is indeed the composite of the two spans.
\end{para}

\begin{para}[Action of spans on maps in $K$ versus composition in $K$]
\label{BIV_COMP_TRIANGLE}

Let $x:D$. The 2-functor $K\rightarrow 1\Cat$ corepresented by $x$ yields a commuting triangle
\[\xymatrix{
D\ar[r]\ar[dr] 	& K\ar[d]^{K(Hx,-)} \\
		& 1\Cat
}\]
from which we obtain, by functoriality of the local representation of spans \eqref{BIV_LOC_FUNC}, a triangle
\[\xymatrix{
\Cor_D(y,z)\ar[r]\ar[dr] & K(Hy,Hz) \ar[d] \\
& 1\Fun[K(Hx,Hy),K(Hx,Hz)]
}\]
where the vertical arrow is the composition law in the 2-category $K$.

Pulling $K(Hx,Hy)$ out of the target, recover
\[\xymatrix{\Cor_D(y,z)\times K(Hx,Hy)\ar[r]^-\alpha\ar[d]_{(H,\,\iden)}&  K(Hx,Hz) \\
 K(Hy,Hz)\times K(Hx, Hy)\ar[ur]_-\circ }\] 
in which the upper arrow is the local action by correspondences on the functor $K(H-,Hz)$ and the lower is composition in $K$.
\end{para}

\begin{prop}[Composition]
\label{BIV_COMP_PROP}

Let $H:D\rightarrow K$ be a bivariant functor. 
The local action by spans intertwines composition of spans by fibre product with composition in $K$, i.e.\ there is a commuting diagram
\[\xymatrix{ \Cor_D(y,z)\times\Cor_D(x,y)\ar[r]^-{\text{f.p.}}\ar[d] & \Cor_D(x,z)\ar[d] \\
K(Hy,Hz)\times K(Hx,Hy)\ar[r]^-\circ & K(Hx,Hz)  }\]
of 1-categories. 
In particular, if $H$ exhibits $K$ is a 2-category of correspondences for $D$, then composition in $K$ can be identified with composition of spans.
\end{prop}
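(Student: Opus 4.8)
The plan is to assemble the two preceding paragraphs \ref{BIV_COMP_ACTION} and \ref{BIV_COMP_TRIANGLE} into a single chain of identifications, using as the connecting tissue the fact that the local representation $H_!$ is, by its very construction in \ref{BIV_LOC_DEF}, a \emph{morphism in $\Biv_D$}. Write $s=(p,q):\Cor_D(x,y)$ and $s'=(p',q'):\Cor_D(y,z)$ for the two input spans and $s'\cdot s:\Cor_D(x,z)$ for their fibre-product composite. Since the diagram in the statement is a square of $1$-functors natural in $x,y,z$, it suffices to exhibit an identification $H_!(s'\cdot s)\cong H_!(s')\circ H_!(s)$ that is functorial in the three objects, where $\circ$ is composition in $K$; naturality then upgrades the pointwise statement to the asserted commuting square.

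First I would recall from \ref{BIV_LOC_DEF} that $H_!$ is the bivariant natural transformation $\theta:\rm y^\mp_Dx\rightarrow H^\dagger Hx=K(Hx,H-)$ which the bivariant Yoneda lemma \ref{BIV_YON_LEMMA} matches with the element $\iden_{Hx}$. By \ref{BIV_COMP_ACTION}, the top edge of the square is exactly the local action of $\Cor_D(y,z)$ on the Yoneda functor $\rm y^\mp_Dx=\Cor_D(x,-)$, so that $s'\cdot s=q'_!p'^!(s)$. The central observation is then that $\theta$, being a morphism of $\Biv_D$, intertwines this action with the corresponding action on $H^\dagger Hx$: ordinary naturality of $\theta$ absorbs the pushforward $q'_!$, while the Beck--Chevalley condition built into the definition of bivariant natural transformation \ref{BIV_FUN_DEF} absorbs the pullback $p'^!$ along the marked leg. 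Finally, \ref{BIV_COMP_TRIANGLE} identifies the local action of $s'$ on $K(Hx,H-)$ with postcomposition by $H_!(s')$ in $K$. Chaining these gives
\[ H_!(s'\cdot s)\;\cong\; q'_!p'^!\,H_!(s)\;\cong\; H_!(s')\circ H_!(s), \]
which is the desired commutativity. The final sentence is then immediate: if $H$ exhibits $K$ as a category of correspondences \ref{BIV_LOC_CORR}, the vertical maps $H_!$ are equivalences, so the square transports the fibre-product composition law onto composition in $K$.

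The step I expect to be the genuine obstacle is the middle one, that $\theta=H_!$ intertwines the two span actions. Pointwise this is the elementary manipulation sketched above, but presenting it as an equality of functors (rather than of objects) requires the naturality of the whole span action in the bivariant functor being acted upon, which rests in turn on the invertibility of the mate attached to $\theta$ and the marked map $p'$. A more hands-on route, which sidesteps an abstract naturality statement, is to settle the $K=1\Cat$ case first by a direct base-change computation: writing $W=K\times_yK'$ with projections $a:W\to K$ and $b:W\to K'$, the composite span acts by $(q'b)_!(pa)^!=q'_!\,b_!\,a^!\,p^!$, and the Beck--Chevalley equivalence $b_!a^!\cong p'^!q_!$ for the defining pullback square, which is precisely the base change property of the $1\Cat$-valued bivariant functor \ref{BIV_FUN_DEF}, collapses this to $q'_!p'^!\circ q_!p^!$. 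One then reduces the general case to $K=1\Cat$ by applying this to $G=K(Hx,H-)$ and evaluating the resulting action on $\iden_{Hx}\in G(x)$, invoking \ref{BIV_COMP_TRIANGLE} to re-express the outcome as composition in $K$. Either way, the entire content of the proposition localises to the base change property of $H$.
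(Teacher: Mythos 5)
Your proposal is correct and takes essentially the same route as the paper: the paper's proof applies the local action (natural in the bivariant functor being acted upon) to the morphism $H_!:\Cor_D(x,-)\rightarrow K(Hx,H-)$ of $\Biv_D$, producing exactly the commuting square you describe, and then stacks it on the triangle of \ref{BIV_COMP_TRIANGLE}. Your pointwise unpacking (ordinary naturality absorbs $q'_!$, Beck--Chevalley absorbs $p'^!$) and your hands-on base-change fallback are explicit renderings of that same naturality, which the paper gets for free from the functorial construction of the local action in \ref{BIV_LOC_ACT} and \ref{BIV_LOC_FUNC}.
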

\begin{proof}
Applying the local action to the local representation $\Cor_D(x,-)\rightarrow K(Hx,H-)$ yields a commuting square
\[\xymatrix{ \Cor_D(y,z)\times\Cor_D(x,y)\ar[r]^-{\alpha_{\rm yx}}\ar[d] & \Cor_D(x,z)\ar[d] \\
\Cor_D(y,z)\times K(Hx,Hy)\ar[r]^-{\alpha_{\rm yHx}} & K(Hx,Hz)  }\]
where, by \ref{BIV_COMP_ACTION}, the top arrow is identified with the fibre product composite of spans. Now stack this square on top of the commuting triangle from \ref{BIV_COMP_TRIANGLE}.
\end{proof}

\begin{remark}
Because the action of spans on themselves \eqref{BIV_COMP_ACTION} is itself a morphism of bivariant functors, we can apply the naturality of the local action to obtain a square 
\[\xymatrix{
 \Cor_D(y,z)\times \Cor_D(x,y)\times\Cor_D(w,x) \ar[r]\ar[d]  & \Cor_D(x,z)\times\Cor_D(w,x) \ar[d] \\ 
 \Cor_D(y,z)\times \Cor_D(w,y) \ar[r] & \Cor_D(w,z)
 }\]
 exhibiting the associativity of composition. Continuing in this way, one imagines that the entire coherently associative structure of the Yoneda image can be reconstructed, and compared to fibre products of spans, explicitly. I will not expand on this further here.
\end{remark}

\begin{para}[Notation]
\label{BIV_LOC_NAME}

If $S_D^+$ and $S_D^-$ are two markings of $D$ with base change, then we can write $\Cor(D,S_D^+,S_D^-)$ or $\Cor_{D\sep (S_D^+,S_D^-)}$ for the 2-subcategory of $\Cor_{D\sep S_D^-}$ whose morphisms are those spans $\cdot\stackrel{S^-_D}{\leftarrow} \cdot \stackrel{S_D^+}{\rightarrow} \cdot$
whose right-way part belongs to $S_D^+$ (and whose wrong-way part belongs to $S_D^-$). 
Proposition \ref{BIV_COMP_PROP} confirms that this is indeed closed under composition.
Note that if $S_D^+ \subsetneq D$, then $D$ does not actually map into $\Cor(D,S_D^+,S_D^-)$.
\end{para}

\section{Universal property (\dag)}\label{UNIV}

We wish to prove that the extension constructed in \S\ref{BIV_EXT} of a bivariant functor of $D$ to a 2-functor of $\Cor_D$ is unique, hence obtaining a universal mapping property for $\Cor_D$ in the form of Theorem \ref{UNIV_EXT_THM}. This is the same as proving the naturality of this extension for change of target category $K$. Sadly, this statement appears not to be quite within the scope of 2-category theory as it has currently been established, and so the arguments of this section are \emph{hypothetical} --- more precisely, conditional on a special case of the 2-dimensional Grothendieck construction discussed below in \S\ref{UNIV_GROT}, which we use to study universal families of correspondence 2-categories.


A reminder: so far, we have only shown that bivariant extensions exist, and that correspondences are unique up to equivalence, but not up to unique equivalence.

\subsection{2-categorical Grothendieck integration}
\label{UNIV_GROT}

In fact, we only need to know the construction for fibrations in 2-categories over a 1-category.

\begin{para}[2-Cartesian fibrations]
\label{UNIV_GROT_CART}

The notion of 2-Cartesian fibration of 2-categories and 2-Cartesian transformations are defined in \cite[Def.\ 11.1.1.2]{GR}, and the dual notions of 2-co-Cartesian fibrations in \cite[(11.1.3.1)]{GR}. Since we only use the Grothendieck construction for fibrations in 2-categories whose base is a 1-category, I reproduce this special case of the definition here: a functor $E\rightarrow D$ is a 2-co-Cartesian fibration if for each $f:x\rightarrow y$ in $D$ and $e:E_x$ there is a \emph{2-co-Cartesian lift} of $f$ with target $e$, that is, a morphism $f_e:e\rightarrow f_!e$ over $f$ such that the induced square
\[\xymatrix{
 E(f_!e,-) \ar[r]^-{-\circ f_e}\ar[d] & E(e,-)\ar[d] \\
 D(y,p-)\ar[r]^-{-\circ f} & D(x,p-)
 }\]
is a pullback of 1-categories. The category of 2-Cartesian fibrations and 2-Cartesian transformations over $D:2\Cat$ forms a subcategory $2\Cart_D$ of the slice $2\Cat\downarrow  D$ of 2-categories over $D$.\end{para}

\begin{hyp}
\label{UNIV_GROT_HYP}

There exists a natural equivalence of the form:
\begin{align*}
&[\text{Grothendieck construction, 2-categorical}]   \\
&  \rm D:1\Cat^\op \quad\underset{1\Cat}{\return} \quad \textstyle\int:2\!\PSh(\rm D)\underset{1\Cat}{\cong}2\Cart_\rm D:\prod.
\end{align*}
that restricts to the identity functor of $2\Cat$ over $D=\bf 1$.
\end{hyp}

\begin{remark}
When using this hypothesis, the 1-categorical Grothendieck construction invoked throughout the paper should be the restriction of this one to fibrations in 1-categories.
\end{remark}

\begin{para}[Epistemological status of 2-dimensional Grothendieck integration]
\label{UNIV_GROT_EPIST}

The formula \ref{UNIV_GROT_HYP} has exactly the same form form as \cite[Thm.\ 11.1.1.8(b)]{GR}. Moreover, the equivalence given restricts to $\iden_{2\Cat}$ over $\rm D=\bf 1$, and to the 1-categorical Grothendieck construction \ref{CAT_GROT_INT} over $1\Cat$. As with all statements in \emph{op.~cit}, it can at best be regarded as conditional on the list of unproved statements \cite[\S10.0.4]{GR}.


\end{para}

\subsection{Spans on a family of bivariant functors}
\label{UNIV_EXT}

We invoke our hypothetical 2-dimensional Grothendieck construction \ref{UNIV_GROT_HYP} in the proof of Theorem \ref{UNIV_EXT_THM} to construct a \emph{universal fibration} over $\bf{Biv}_D$. In this section and from now on, $\Cor_D$ stands for a 2-category of correspondences for $D$, which may as well be the bivariant Yoneda image of $D$.

\begin{prop}[Span extension in families]
\label{UNIV_EXT_FAMILY}
Let $D:1\Cat^{+r}$, $I:1\Cat$, and $K:2\Cart_I$. Suppose given a commuting triangle
\[\xymatrix{
D\times I\ar[rr]^{H}\ar[dr]_{\rm{pr_I}} && K\ar[dl]^\pi \\
&I
}\]
where $H$ preserves Cartesian arrows and is right bivariant with base change. Then this triangle can be embedded in a commuting diagram in $2\Cat$:
\[\xymatrix{
  D\times I \ar[dr]|-{i\times \iden_I} \ar[rr]^H \ar[ddr] & &
    K\ar[ddl]^\pi \\
  & \Cor_D\times I\ar[d]|{\mathstrut\rm{pr}_I}\ar[ur]|-{H'} & \\
&I
}\]
where $H'$ is a 2-Cartesian transformation over $I$ whose local action $\Cor_D(x,y)\rightarrow K_i(Hx,Hy)$ for each $i:I$ and $x$, $y:D$ agrees with the local action of spans induced by $H_i$.
\end{prop}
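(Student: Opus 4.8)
The plan is to relativize the span-extension construction of Theorem \ref{BIV_EXT_THM} over the parameter category $I$. Recall that the extension functor $\Spanext$ was assembled from just two features of its target: the mapping-category bifunctor $K(-,-)$, used to build the pullback $H^\dagger$ and pushforward $H_*$, and the $2$-Yoneda lemma for $K$, used to recognise that $H_*$ restricts to a functor valued in $K\subseteq 1\PSh K$. Both features make sense fibrewise for the $2$-Cartesian fibration $\pi:K\to I$: the mapping bifunctor organises over $\pi$, and the $2$-Yoneda lemma holds in each fibre $K_i$. Via the hypothetical $2$-dimensional Grothendieck construction \ref{UNIV_GROT_HYP}, $\pi$ is classified by its fibre-transport $2$-functor $\fibre_I K:I^\op\to 2\Cat$, $i\mapsto K_i$, and the hypothesis that $H$ is bivariant with base change over $I$ says precisely that each restriction $H_i:D\to K_i$ is bivariant with base change, compatibly with the transition functors and carrying the horizontal arrows over $I$ to $2$-co-Cartesian lifts in $K$.

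First I would apply Theorem \ref{BIV_EXT_THM} in each fibre to obtain $2$-functors $\Spanext(H_i):\Cor_D\to K_i$, each a section of restriction along the inclusion $D\subseteq\Cor_D$, so that $\Spanext(H_i)$ restricts to $H_i$. To assemble these over $I$, I would invoke the naturality of span extension in the target recorded in \ref{BIV_LOC_FUNC}: for a morphism $f:i\to i'$ with transition $2$-functor $t_f$, naturality yields $\Spanext(t_f\circ H_{i'})\cong t_f\circ\Spanext(H_{i'})$, while the coherence of $H$ as a family identifies $t_f\circ H_{i'}$ with $H_i$; together these furnish the comparison cells relating the fibrewise extensions. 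Feeding this datum back through the $2$-dimensional Grothendieck construction \ref{UNIV_GROT_HYP} reconstitutes a genuine transformation $H':\Cor_D\times I\to K$ over $I$, and the fibrewise section property guarantees that $H'$ restricts to $H$ on $D\times I$ — the commutativity of the upper triangle.

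It remains to verify that $H'$ is a $2$-co-Cartesian transformation over $I$, i.e.\ that it carries the co-Cartesian arrows of $\Cor_D\times I\to I$ to co-Cartesian arrows of $K$. Such an arrow has the form $(\gamma,f)$ with $\gamma$ an equivalence of $\Cor_D$ and $f:i\to i'$ in $I$; factoring it as the vertical equivalence $(\gamma,\iden_{i'})$ after $(\iden_c,f)$ and using that any functor preserves vertical equivalences, it suffices to treat the arrows $(\iden_c,f)$ for $c:\Cor_D$. Here the decisive point is that the objects of $\Cor_D$ are exactly the objects of $D$ (Def.\ \ref{BIV_LOC_CORR}): the arrow $(\iden_c,f)$ already lies in $D\times I$, so $H'(\iden_c,f)\cong H(\iden_c,f)$, which is $2$-co-Cartesian by the hypothesis on the family $H$. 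Thus co-Cartesianness of $H'$ is inherited verbatim from that of $H$. The genuine obstacle is the assembly step: $\Spanext$ is known to be only $1$-natural in the target (the bottleneck flagged after Theorem \ref{BIV_EXT_THM}, stemming from the bivariant Yoneda evaluation map \eqref{BIV_YON_EVAL} being merely $1$-natural), so promoting the fibrewise extensions to a construction compatible with the transition $2$-functors — producing $H'$ as an actual functor over $I$ rather than a fibrewise collection — is exactly what compels us to route the argument through the conditional $2$-categorical Grothendieck construction \ref{UNIV_GROT_HYP}.
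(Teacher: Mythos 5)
Your fibrewise strategy contains a genuine gap at the assembly step, and it is exactly the step you yourself flag as the ``genuine obstacle'': the passage from the collection $\{\Spanext(H_i):\Cor_D\to K_i\}_{i:I}$ to a single functor $H':\Cor_D\times I\to K$ over $I$. The 2-dimensional Grothendieck construction (Hypothesis \ref{UNIV_GROT_HYP}) cannot do this for you: it converts an \emph{already coherent} piece of data --- a 2-natural transformation $\underline{\Cor_D}\to\fibre_I K$ of 2-functors out of $I$ --- into a transformation of fibrations; it does not manufacture the coherence. To feed it you would need, beyond the fibrewise 2-functors, invertible comparison cells $t_f\circ\Spanext(H_{i'})\cong\Spanext(t_f\circ H_{i'})\cong\Spanext(H_i)$ for every $f:i\to i'$, \emph{together with} compatibilities over all composable strings in $I$. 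Neither piece is available: \ref{BIV_LOC_FUNC} is a statement about the \emph{local representation} --- commuting triangles of mapping 1-categories $\Cor_D(x,y)\to K(Hx,Hy)\to K'(GHx,GHy)$ --- and does not produce an equivalence of 2-functors $\Spanext(G\circ H)\cong G\circ\Spanext(H)$; producing such an identification canonically amounts to the unicity of extensions, i.e.\ to the universal property (Theorem \ref{UNIV_EXT_THM}) that this Proposition is a step towards proving, so invoking it here is circular. And even granted those 2-cells, the higher coherences are precisely the ``infinite hierarchy of compatibilities'' the paper's machinery is designed to avoid; a proof that ends by gesturing at them has not closed the loop.

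The paper's proof avoids fibrewise assembly altogether, and the construction of $H'$ is unconditional: since $I$ is trivially marked, $D\times I$ is itself an object of $1\Cat^{+r}$ and $H$ is a \emph{single} bivariant functor with base change on it; Theorem \ref{BIV_EXT_THM}, applied once, gives $\Spanext(H):\Cor_{D\times I}\to K$, and the remaining content is the identification $\Cor_{D\times I}\cong\Cor_D\times I$ (Lemma \ref{UNIV_EXT_PROD}, via recognition of products, Lemma \ref{CAT_FURTHER_LIMIT}) together with the observation (Lemma \ref{UNIV_EXT_CART}) that the arrows one must check for 2-co-Cartesianness, namely those in $I$-slices $(x,i)\to(x,j)$, already lie in the image of $D\times I$, where the claim holds by hypothesis on $H$. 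Your final verification paragraph is essentially this last lemma and is fine, but note one further slip there: bivariance with base change of $H$ does \emph{not} ``say precisely'' that horizontal arrows over $I$ go to 2-co-Cartesian arrows of $K$ --- the marking of $D\times I$ lies in the $D$-slices, so this is an additional hypothesis on $H$ (it is stated as such in Lemma \ref{UNIV_EXT_CART}, and holds in the intended application because the input triangle there is produced by Grothendieck integration).
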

\begin{proof}
We factor $H$ through its span extension $\Spanext(H):\Cor_{D\times I}\rightarrow K$ (\S\ref{BIV_EXT}). This fits into a diagram
\[\xymatrix{
  D\times I \ar[dr] \ar[rr]^H \ar[ddr] & &
    K\ar[ddl]^\pi \\
  & \Cor_{D\times I}\ar[d]|{\pi \circ H'}\ar[ur]^(.4){\Spanext(H)} & \\
&I
}\]
whose outer triangle commutes by hypothesis, the top because $\Spanext(H)$ factorises $H$, and whose lower right triangle commutes by construction. The lower left triangle is the composite of the other three. 

We get a diagram of the promised form by finding a projection $\Cor_{D\times I}\rightarrow\Cor_D$ which, together with $\pi\circ H'$, decomposes the domain as a product. This is provided by Lemma \ref{UNIV_EXT_PROD}. That $H'$ is a Cartesian transformation is Lemma \ref{UNIV_EXT_CART}, and the formula for the local action follows from its unicity \eqref{BIV_LOC} as a natural transformation of functors of $y:D$ that sends $\id_X$ to $\id_{Hx}$.
\end{proof}

\begin{para}[Product structure]
\label{UNIV_EXT_PROJ}

The evident maps from $D\times I$ to $\Cor_D$ and $I=\Cor_I$ (recall that $S_I=\{\text{isos}\}$) are both bivariant, and so the work of \S\ref{BIV_EXT} gives us span extensions
\[ \Spanext(\rm{pr}'_D):\Cor_{D\times I}\rightarrow \Cor_D \qquad \Spanext(\rm{pr}_I):\Cor_{D\times I}\rightarrow I \]
where $\rm{pr}'_D$ is the composite of projection on $D$ with the embedding $D\subseteq\Cor_D$.
These exhibit $\Cor_{D\times I}$ as a product $\Cor_D\times I$. Indeed, by recognition of products (Lemma \ref{CAT_FURTHER_LIMIT}), it is enough to show that the projections induce decompositions:
\begin{enumerate}[label=\arabic*), start=0]
\item $\Ob(\Cor_{D\times I})\cong\Ob(\Cor_D)\times\Ob(I)$;
\item $\Cor_{D\times I}((x,i),(y,j))\cong\Cor_D(x,y)\times I(i,j)$ for each $x,y:\Cor_D$ and $i,j:I$.
\end{enumerate}
The first statement is clear, since both sides are equal to $\Ob(D\times I)=\Ob(D)\times\Ob(I)$. For the second, the map we are looking at is the local representation of correspondences which, following example \ref{BIV_LOC_EX}, is the fibre of the map
\[  (D\times I)^\Lambda \rightarrow D^\Lambda\times I^\Lambda\cong D^\Lambda\times I \]
given by functoriality of $(-)^\Lambda$ over the given source and target. This map is invertible because exponentiation commutes with limits.
\end{para}

\begin{lemma}
\label{UNIV_EXT_PROD}

In the context of Proposition \ref{UNIV_EXT_FAMILY}, let $H':\Cor_{D\times I}\rightarrow K$ be an extension of $H$ that acts on mapping 1-categories through the local representation of spans. The induced 2-functor 
\[ \left(\Spanext(\rm{pr}'_D),\, \pi\circ H'\right):\Cor_{D\times I}\rightarrow \Cor_D\times I \] 
is an equivalence of 2-categories.
\end{lemma}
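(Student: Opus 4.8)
The plan is to verify that the stated 2-functor is an equivalence by the recognition of products afforded by Lemma \ref{CAT_FURTHER_LIMIT}, reducing the claim to separate statements about objects and about mapping 1-categories. This is exactly the strategy already executed in \ref{UNIV_EXT_PROJ} for the pair $(\Spanext(\rm{pr}'_D),\Spanext(\rm{pr}_I))$; the only new input needed is that replacing the second coordinate $\Spanext(\rm{pr}_I)$ by $\pi\circ H'$ does not change the induced map — up to canonical isomorphism — on objects or on mapping categories. So I would reuse the computations of \ref{UNIV_EXT_PROJ} and concentrate on identifying $\pi\circ H'$ with $\Spanext(\rm{pr}_I)$ at the level of the data that Lemma \ref{CAT_FURTHER_LIMIT} sees.

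On objects there is nothing to do: $\Spanext(\rm{pr}'_D)$ sends $(x,i)$ to $x$, while $\pi\circ H'$ sends $(x,i)$ to $\pi H(x,i)=\rm{pr}_I(x,i)=i$, using that $H'$ extends $H$ and that the outer triangle of Proposition \ref{UNIV_EXT_FAMILY} commutes. Thus the object map is the identity of $\Ob(D)\times\Ob(I)=\Ob(\Cor_{D\times I})$, exactly as in \ref{UNIV_EXT_PROJ}.

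The substance is the mapping-category coordinate. Here I would argue that $\pi\circ H'$ acts on each $\Cor_{D\times I}((x,i),(y,j))$ through the local representation $(\rm{pr}_I)_!$, precisely as $\Spanext(\rm{pr}_I)$ does by Theorem \ref{BIV_EXT_THM}. Indeed, by hypothesis $H'$ acts on mapping categories through the local representation $H_!$; postcomposing with the 2-functor $\pi:K\to I$ and invoking the functoriality of the local representation in the target (\ref{BIV_LOC_FUNC}, item ii), the composite $\pi\circ H'$ acts through $(\pi\circ H)_!=(\rm{pr}_I)_!$. Consequently the map induced by $(\Spanext(\rm{pr}'_D),\pi\circ H')$ on $\Cor_{D\times I}((x,i),(y,j))$ is, up to canonical isomorphism, the pair $((\rm{pr}'_D)_!,(\rm{pr}_I)_!)$ with target $\Cor_D(x,y)\times I(i,j)$ — precisely the map identified in \ref{UNIV_EXT_PROJ} as the fibre of $(D\times I)^\Lambda\to D^\Lambda\times I^\Lambda\cong D^\Lambda\times I$ and shown there to be invertible.

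Combining the two coordinates, Lemma \ref{CAT_FURTHER_LIMIT} then yields that $(\Spanext(\rm{pr}'_D),\pi\circ H')$ is an equivalence of 2-categories. The delicate point I expect to be the main obstacle is the identification of the second coordinate with the local representation $(\rm{pr}_I)_!$: one must track that the functoriality isomorphism of \ref{BIV_LOC_FUNC} is compatible with the way $H'$ realises $H_!$ on mapping categories, so that $\pi\circ H'$ genuinely agrees with $\Spanext(\rm{pr}_I)$ there rather than merely on objects. Once that compatibility is in hand, the rest is a transcription of \ref{UNIV_EXT_PROJ}.
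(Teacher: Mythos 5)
Your proposal is correct and follows essentially the same route as the paper's proof: reduction via the recognition of products (Lemma \ref{CAT_FURTHER_LIMIT}), the immediate identification on objects, and then the use of the functoriality of the local representation in the target 2-category (\ref{BIV_LOC_FUNC}) to identify $\pi\circ H'$ on mapping categories with the local projection $(\rm{pr}_I)_!$ from \ref{UNIV_EXT_PROJ}, whose invertibility in the pair was established there. The "delicate point" you flag is exactly what the paper disposes of with the commuting triangle supplied by \ref{BIV_LOC_FUNC}, so no further work is needed.
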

\begin{proof}
Again, use recognition of products (Lemma \ref{CAT_FURTHER_LIMIT}) to reduce to separate questions about spaces of objects and mapping 1-categories. The former statement is immediate, as the space of objects of $\Cor_{D\times I}$ is the same as that of $D\times I$. 

For the latter, the compatibility \eqref{BIV_LOC_FUNC} of the local representation of spans with change of target category gives a commuting triangle
\[\xymatrix{
\Cor_{D\times I}((x,i),(y,j))\ar[r]\ar[d] & K(H(x,i),H(y,j))\ar[dl] \\
I(i,j)
}\]
where the vertical map is the local projection described in \ref{UNIV_EXT_PROJ}; hence, by \emph{loc.~cit}, the local action \[ \Cor_{D\times I}((x,i),(y,j)) \rightarrow \Cor_D(x,y) \times I(i,j) \] is a 1-equivalence.
\end{proof}

\begin{remark}
Note that we did not actually invoke the product decomposition of $\Cor_{D\times I}$ obtained in \ref{UNIV_EXT_PROJ} directly, but only the local decomposition of its mapping categories. The projection $\Spanext(\rm{pr}_I):\Cor_{D\times I}\rightarrow I$ obtained there is not \emph{a priori} identified with $\pi\circ\Spanext(H)$. However, such an identification does follow \emph{a posteriori} from unicity, which we establish below.
\end{remark}

\begin{lemma}[Spans on a Cartesian family]
\label{UNIV_EXT_CART}

Suppose that $K\rightarrow I$ is a 2-Cartesian fibration and $D\times I\rightarrow K$ is a 2-Cartesian transformation. Then $H'$ is also a 2-Cartesian transformation.
\end{lemma}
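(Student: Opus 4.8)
The plan is to transport everything across the product decomposition of Lemma \ref{UNIV_EXT_PROD} and then observe that the 2-co-Cartesian lifts one must preserve already live in the subcategory $D\times I$, where $H'$ restricts to the given transformation $H$.

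First I would use Lemma \ref{UNIV_EXT_PROD} to replace $\Cor_{D\times I}$ by $\Cor_D\times I$, arranged so that the structure projection $\pi\circ H'$ becomes the product projection $\mathrm{pr}_I$ and the inclusion of $D\times I$ becomes $i\times\iden_I$. A product projection $\Cor_D\times I\to I$ is visibly a 2-co-Cartesian fibration: the 2-co-Cartesian lift of $f:i\to j$ at $(c,i)$ is $(\iden_c,f):(c,i)\to(c,j)$, and the pullback condition of \ref{UNIV_GROT_CART} is immediate from the factorisation $(\Cor_D\times I)((c,i),(c',j'))\cong\Cor_D(c,c')\times I(i,j')$. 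By the defining property of the span extension, $H'\circ(i\times\iden_I)=H$.

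The decisive step is that $\Ob(\Cor_D)=\Ob(D)$, so every object of $\Cor_D\times I$ has the form $(d,i)$ with $d:D$, whence $\iden_c=i(\iden_d)$ and the lift $(\iden_c,f)$ equals $(i\times\iden_I)$ applied to $(\iden_d,f)$ --- the 2-co-Cartesian lift of $f$ at $(d,i)$ for the product fibration $D\times I\to I$. Since $H$ is a 2-co-Cartesian transformation, $H(\iden_d,f)$ is 2-co-Cartesian in $K$; and $H'(\iden_c,f)=H(\iden_d,f)$, so $H'$ sends the chosen lift to a co-Cartesian one. As this covers every object and every morphism of $I$, $H'$ preserves all 2-co-Cartesian lifts, hence is a 2-co-Cartesian transformation.

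The real work is not in this formal chain but in the bookkeeping that precedes it: one must check that the fibration structure $\Cor_{D\times I}\to I$ at issue is the one induced by $\pi\circ H'$ rather than the a priori different $\Spanext(\mathrm{pr}_I)$ flagged in the remark following Lemma \ref{UNIV_EXT_PROD}, and that the notion of 2-co-Cartesian transformation of \cite{GR} demands nothing beyond preservation of co-Cartesian $1$-cells. Granting these, the argument is essentially a transport of structure across the equivalence of Lemma \ref{UNIV_EXT_PROD}.
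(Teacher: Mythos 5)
Your proposal is correct and follows essentially the same route as the paper's proof: both use the product decomposition of Lemma \ref{UNIV_EXT_PROD} to reduce to checking that arrows in $I$-slices (the co-Cartesian lifts $(\iden_x,\phi)$) are sent to 2-co-Cartesian arrows in $K$, and both then observe that, since $\Ob(\Cor_D)=\Ob(D)$, all such arrows lie in the image of $D\times I$, where $H'$ restricts to the 2-co-Cartesian transformation $H$. Your extra bookkeeping (verifying the product projection is a 2-co-Cartesian fibration, and flagging the distinction between $\pi\circ H'$ and $\Spanext(\mathrm{pr}_I)$) only makes explicit what the paper leaves implicit.
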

\begin{proof}
Since $\Cor_{D\times I}$ is a product by Lemma \ref{UNIV_EXT_PROD}, we are just checking that all arrows in $I$-slices, that is, of the form $(x,i)\stackrel{\iden_x\times\phi}{\longrightarrow} (x,j)$, are mapped to 2-Cartesian arrows in $K$. All such arrows are in the image of $D\times I$, so this follows from the fact that $H'$ is an extension of $H$, which is a 2-Cartesian transformation by hypothesis.
\end{proof}

\begin{thm}[Universal property of correspondences (\dag)]
\label{UNIV_EXT_THM}

A 2-category of correspondences is a 3-universal bivariant extension. There is a unique 1-functor 
\[ \Cor:1\Cat^{+r}\rightarrow2\Cat \]
equipped with a natural transformation $h:\iota\rightarrow\Cor$, where $\iota:1\Cat^{+r}\rightarrow 2\Cat$ forgets the marking, such that $h_D:D\rightarrow\Cor_D$ is a category of correspondences for each $D:1\Cat^{+r}$.
\end{thm}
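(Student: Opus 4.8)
The plan is to deduce both assertions from a single fact: a 2-category of correspondences $(\Cor_D, h_D)$ is an \emph{initial object} of the 1-category $\bf{Biv}_D$ of bivariant extensions of \ref{BIV_UNIV_1CAT}. By the proof of Lemma \ref{BIV_UNIV_BOOT}, being initial in $\bf{Biv}_D$ is precisely 1-universality, and the bootstrapping argument recorded there promotes 1-universality to 3-universality; so the first sentence of the theorem will follow once initiality is known. The existence of an extension $\Spanext(H)\colon\Cor_D\to K$ of any $H\colon\Biv(D,K)$ is already Theorem \ref{BIV_EXT_THM}, so the real content is the \emph{contractibility} of the space of such extensions, i.e. initiality, which I would establish by producing a coherent natural cone rather than by arguing existence and uniqueness separately.

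To build the cone I would assemble the universal family over $I=\bf{Biv}_D$. The assignment $(K,H)\mapsto K$ is a $1$-functor $\bf{Biv}_D\to 2\Cat$; under the (co-Cartesian form of the) $2$-dimensional Grothendieck construction, Hypothesis \ref{UNIV_GROT_HYP}, it classifies a $2$-co-Cartesian fibration $\mathcal K\to\bf{Biv}_D$, while the tautological section of $\bf{Biv}_D=\int\Ob\Biv(D,-)$ furnishes a bivariant transformation $\mathcal H\colon D\times\bf{Biv}_D\to\mathcal K$ over $\bf{Biv}_D$ restricting to $H$ in the fibre $(K,H)$. Span extension in families (Proposition \ref{UNIV_EXT_FAMILY}, with Lemma \ref{UNIV_EXT_CART}) then yields a $2$-co-Cartesian transformation $\mathcal H'\colon\Cor_D\times\bf{Biv}_D\to\mathcal K$ extending $\mathcal H$, and running $\mathcal H'$ back through the Grothendieck equivalence turns it into a natural transformation $\underline{\Cor_D}\Rightarrow[(K,H)\mapsto K]$ of functors $\bf{Biv}_D\to 2\Cat$, concretely a family of extensions $\Spanext(H)\colon\Cor_D\to K$ natural in $(K,H)$. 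Since a morphism $(\Cor_D,h_D)\to(K,H)$ in $\bf{Biv}_D$ is exactly a $2$-functor $\Cor_D\to K$ carrying $h_D$ to $H$, this is the same datum as a natural transformation from the constant functor at $(\Cor_D,h_D)$ to $\iden_{\bf{Biv}_D}$. Recalling that such a transformation exhibits its constant value as initial as soon as its component there is invertible, it remains to check that the component $\Spanext(h_D)\colon\Cor_D\to\Cor_D$ is an equivalence: by Theorem \ref{BIV_EXT_THM} it acts on each mapping category through the local representation $(h_D)_!$, which is the identity of $\Cor_D(x,y)$ precisely because $h_D$ is the structural inclusion of a category of correspondences (\ref{BIV_LOC_CORR}, \ref{BIV_LOC_IMAGE}); being also the identity on objects and characterised by its local action (\ref{BIV_EXT_LOC}), $\Spanext(h_D)$ agrees with $\iden_{\Cor_D}$. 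Hence $(\Cor_D,h_D)$ is initial, and the first assertion follows.

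For the second assertion I would harvest functoriality from the fibration $\rm{src}\colon\bf{Biv}\to 1\Cat^{+r}$ of \ref{BIV_UNIV_1CAT}, which is Cartesian, its pullback along $F\colon D\to D'$ sending $(K,H\colon D'\to K)$ to $(K,H\circ F)$. Let $\bf{Biv}^{\rm init}\subseteq\bf{Biv}$ be the full subcategory of objects initial in their fibre. For $X,Y:\bf{Biv}^{\rm init}$ over $D,D'$, the Cartesian lift identifies the fibre of $\Map_{\bf{Biv}}(X,Y)\to\Map_{1\Cat^{+r}}(D,D')$ over $g\colon D\to D'$ with $\Map_{\bf{Biv}_D}(X,g^!Y)$, which is contractible because $X$ is initial; together with fibrewise essential surjectivity (each $\bf{Biv}_D$ carries the initial object $(\Cor_D,h_D)$ just produced), this makes $\rm{src}|_{\bf{Biv}^{\rm init}}\colon\bf{Biv}^{\rm init}\to 1\Cat^{+r}$ a trivial fibration. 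Its essentially unique section $s$, postcomposed with the target projection $\bf{Biv}\to 2\Cat^{\Delta^1}\to 2\Cat$, delivers the functor $\Cor\colon 1\Cat^{+r}\to 2\Cat$ and the natural transformation $h\colon\iota\Rightarrow\Cor$ with each $h_D$ a category of correspondences. Uniqueness is then automatic: any $(\Cor,h)$ of the required form lands, by the first part, in $\bf{Biv}^{\rm init}$, hence is a section of the trivial fibration $\rm{src}|_{\bf{Biv}^{\rm init}}$ and so coincides with $s$.

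The main obstacle is the construction of the universal fibration $\mathcal K\to\bf{Biv}_D$ and the extraction of the natural cone from $\mathcal H'$: both rest squarely on the $2$-dimensional Grothendieck construction of Hypothesis \ref{UNIV_GROT_HYP}, which is the conditional ingredient and the reason for the $(\dag)$. A secondary point demanding care is the verification that the cone's component at $(\Cor_D,h_D)$ is genuinely the identity and not merely some autoequivalence; this is exactly where the characterisation of $\Spanext$ by its local action in \ref{BIV_EXT_LOC} does the work, and where one must be sure that $(h_D)_!$ is the identity rather than an abstract equivalence.
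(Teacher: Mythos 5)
Your proposal is correct and follows essentially the same route as the paper: apply the 2-dimensional Grothendieck construction (Hypothesis \ref{UNIV_GROT_HYP}) to the tautological family over $\bf{Biv}_D$, extend it in families via Proposition \ref{UNIV_EXT_FAMILY}, reverse the integration to obtain a cone from the constant functor at $(\Cor_D,h_D)$ to $\iden_{\bf{Biv}_D}$, and then bootstrap to 3-universality and to the 1-functor $\Cor$ via Lemma \ref{BIV_UNIV_BOOT}. The only differences are presentational: you spell out the initiality criterion (invertibility of the cone's component at $(\Cor_D,h_D)$, checked through the local representation), a point the paper passes over silently, and for the second assertion you inline the ``fibrewise initial objects assemble into a section'' argument that the paper delegates to the proof of Lemma \ref{BIV_UNIV_BOOT}.
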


\begin{proof}
Applying 2-dimensional (contravariant) Grothendieck integration to the tautological 1-natural transformation
\[ [H:D\rightarrow K]:\bf{Biv}_D \quad\return\quad H:D \underset{2\Cat}{\rightarrow}  K
\]
of functors $\bf{Biv}_D\rightarrow 2\Cat$
(recall \eqref{BIV_UNIV_1CAT} that we consider $\bf{Biv}_D$ as a 1-category), we obtain a 2-co-Cartesian transformation:
\[\xymatrix{
D\times\bf{Biv}_D^\op\ar[rr]\ar[dr]	&& \int_{(H,K):\bf{Biv}_D^\op}K\ar[dl] \\
	& \bf{Biv}_D^\op
}\]
Plugging this triangle into Proposition \ref{UNIV_EXT_FAMILY} we embed this in an extended diagram
\[\xymatrix{
  D\times\bf{Biv}_D^\op\ar[r]\ar[dr]	& 
    \Cor_D\times\bf{Biv}_D^\op \ar[d]|{\rm{pr}_{\bf{Biv}_D^\op}} \ar[r] & 
    \int_{(H,K):\bf{Biv}_D^\op}K\ar[dl] \\
  & \bf{Biv}_D^\op
}\]
of 2-Cartesian transformations. Moreover, by the formula for the local action on slices of the functor $H'$ produced by \ref{UNIV_EXT_FAMILY}, the right horizontal map restricts to the identity over $K=\Cor_D$.

 We now reverse the Grothendieck construction to obtain a pointwise bivariant natural transformation from the constant functor $\bf{Biv}_D\rightarrow \bf{Biv}_D$ with value $D\rightarrow\Cor_D$ into the identity functor which is equivalent to the identity on $D\rightarrow\Cor_D$. This exhibits $ D\rightarrow\Cor_D$ as an initial object of $\bf{Biv}_D$, proving 1-universality. Lemma \ref{BIV_UNIV_BOOT} then gives us $\Cor$ as a 1-functor and 3-universality.
\end{proof}

\begin{cor}[Unicity of correspondences]
\label{UNIV_EXT_UNIQUE}

Categories of correspondences are unique.
\end{cor}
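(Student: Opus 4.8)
The plan is to upgrade the weak unicity of Corollary \ref{BIV_EXT_UNIQUE} (equivalence under $D$) to genuine unicity (a contractible space of choices) by identifying the categories of correspondences of $D$ with the initial objects of the $1$-category $\bf{Biv}_D$ of bivariant extensions \ref{BIV_UNIV_1CAT}, and then invoking the fact that the space of initial objects of a $1$-category is automatically contractible once it is nonempty.

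First I would record that, by the first assertion of Theorem \ref{UNIV_EXT_THM}, any category of correspondences $H:D\to K$ is a $3$-universal bivariant extension, and hence by the final sentence of Lemma \ref{BIV_UNIV_BOOT} a $1$-universal one --- that is, an initial object of $\bf{Biv}_D$ (as identified in the proof of that lemma). This exhibits the full subspace of $\Ob(\bf{Biv}_D)$ spanned by the categories of correspondences as a subspace of the space of initial objects of $\bf{Biv}_D$.

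Next I would argue the reverse containment. By Corollary \ref{BIV_LOC_IMAGE} the bivariant Yoneda image is a category of correspondences, and by the previous paragraph it is an initial object of $\bf{Biv}_D$; in particular the space of initial objects is nonempty. The conditions defining a category of correspondences in Definition \ref{BIV_LOC_CORR} --- essential surjectivity together with the local equivalences $\Cor_D(x,y)\tilde\rightarrow K(Hx,Hy)$ --- are manifestly invariant under equivalence in $\bf{Biv}_D$, so any extension equivalent to the Yoneda image is again a category of correspondences. Since all initial objects of $\bf{Biv}_D$ are equivalent to one another, every initial object is a category of correspondences, and the two subspaces coincide.

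Finally, the space of initial objects of a $1$-category is empty or contractible; being nonempty here, it is contractible, which is exactly the assertion that categories of correspondences are unique. All the substantive input is contained in Theorem \ref{UNIV_EXT_THM} and Lemma \ref{BIV_UNIV_BOOT}, so I expect no genuine obstacle; the one point meriting care is the stability of the property of Definition \ref{BIV_LOC_CORR} under equivalence, which is what secures the second containment and converts the mere equivalence-unicity of Corollary \ref{BIV_EXT_UNIQUE} into a contractible space of choices.
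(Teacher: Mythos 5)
Your argument is correct and takes essentially the same route as the paper: the paper's entire proof is the one-liner ``Because they are universal,'' i.e.\ it rests on Theorem \ref{UNIV_EXT_THM} (universality) exactly as yours does, with the identification of universal extensions as initial objects of $\bf{Biv}_D$ coming from Lemma \ref{BIV_UNIV_BOOT}. Your additional check that the defining property of Definition \ref{BIV_LOC_CORR} is equivalence-invariant --- which is what lets you identify the space of correspondence categories with the whole (contractible) space of initial objects --- is a point the paper leaves implicit.
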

\begin{proof}Because they are universal.\end{proof}

\subsection{Correspondences and Cartesian fibrations}
\label{UNIV_CART}

In this section we study a generalisation of the families of bivariant functors studied in \S\ref{UNIV_EXT} in which the source category $D$ is also allowed to vary. We will apply this in the proof of Theorem \ref{UNIV_MON_LAX}, an oplax monoidal generalisation of \ref{UNIV_EXT_THM}. 

\begin{prop}[Integral of a family of markings with base change]
\label{UNIV_CART_MARK}

Let $D:I\rightarrow 1\Cat^{+r}$ be a 1-functor, and mark the contravariant Grothendieck integral $p:\int_{I^\op}D\rightarrow I^\op$ with the union of the markings of the fibres. This marking has base change, and the inclusion of each fibre preserves base change.
\end{prop}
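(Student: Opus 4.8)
The plan is to read off the structure of $p:\int_I D\to I^\op$ as a Cartesian fibration and then reduce everything to a fibrewise computation. By the Grothendieck construction \ref{CAT_GROT_INT}, the contravariant integral is a Cartesian fibration whose fibre over $i:I$ is $D_i$ and whose pullback functor along a morphism $\bar\alpha:k\to i$ of $I^\op$ (that is, along $\alpha:i\to k$ in $I$) is the structure functor $D(\alpha):D_i\to D_k$; this is the content of the [Morphisms] and [Objects] items of \ref{CAT_GROT_PROPS}. The decisive point is that $D$ lands in $1\Cat^{+r}$, so \emph{every} such pullback functor $D(\alpha)$ is a marked functor preserving base change. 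First I would check that the marking $S=\bigcup_i S_{D_i}$ of vertical fibrewise-marked arrows is a genuine marking: it contains all identities, and two composable vertical arrows must lie over the same object of $I^\op$, where closure of $S_{D_i}$ under composition applies. In particular every marked arrow of $\int_I D$ is vertical, so to verify base change (Definition \ref{BIV_BC_DEF}) it suffices to pull back a vertical marked $f:x\to y$ in some $D_i$ along an arbitrary $g:z\to y$.

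Next I would compute this pullback inside a fibre. Writing $\bar\alpha=p(g):k\to i$, factor $g=c\circ\hat g$ through the Cartesian lift $c:D(\alpha)y\to y$ of $\bar\alpha$, with $\hat g:z\to D(\alpha)y$ vertical in $D_k$. Transporting $f$ produces the Cartesian square $f\circ c'=c\circ D(\alpha)f$, where $c':D(\alpha)x\to x$ is Cartesian and $D(\alpha)f$ is vertical; since $D(\alpha)$ preserves markings, $D(\alpha)f$ is marked in $D_k$. As $D_k$ has base change, the pullback $w:=z\times_{D(\alpha)y}D(\alpha)x$ exists in $D_k$, and its projection $w\to z$, being the pullback of the marked $D(\alpha)f$ along $\hat g$, is marked in $D_k$ and hence in $\int_I D$. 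This exhibits a candidate square in $\int_I D$ with the desired marked leg; what remains is to show that $w$ is the pullback in the \emph{total} category and not merely in the fibre.

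This last point is the crux, and the step I expect to be the main obstacle. I would upgrade $w$ to a $p$-relative limit, in the homotopy-invariant sense of the remark following Lemma \ref{CAT_FIB_LEMMA}, which is modelled on \cite[\S4.3.1]{HTT}. The mapping-space criterion for a $p$-limit reduces the claim to checking that, for each $\beta:t\to k$ in $I^\op$, the Cartesian pullback functor $\beta^!=D(\delta):D_k\to D_t$ carries the square defining $w$ to a pullback square. This is exactly where the standing hypothesis is used: $D(\delta)$ has base change and the square is the pullback of the \emph{marked} arrow $D(\alpha)f$, so it is preserved. Hence $w$ is a $p$-limit. Finally, the cospan $x\to y\leftarrow z$ projects in $I^\op$ to $i\xrightarrow{\iden}i\xleftarrow{\bar\alpha}k$, whose limit is $k$, realised by the cone through $p(w)=k$; a $p$-limit lying over a limit cone in the base is an absolute limit \cite[\S4.3.1]{HTT}. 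Therefore $w=x\times_y z$ in $\int_I D$, its marked projection to $z$ witnesses that $S$ has base change, and since we have exhibited an actual limit the representability clause of Definition \ref{BIV_BC_DEF} is automatic.

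The second assertion then comes for free. Specialising the construction to $z:D_i$ and $g$ vertical forces $\bar\alpha=\iden_i$ and $D(\alpha)=\iden_{D_i}$, so the total pullback $w$ is computed by the ordinary pullback inside $D_i$. Consequently the fibre inclusion $D_i\hookrightarrow\int_I D$ sends the pullback in $D_i$ of a marked arrow to the pullback in $\int_I D$; that is, it preserves base change, as claimed.
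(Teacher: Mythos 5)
Your proof is correct, and it rests on exactly the same key input as the paper's: since $D$ lands in $1\Cat^{+r}$, every transition functor $D(\alpha)$ of the fibration preserves markings and pullbacks of marked arrows, so the whole question reduces to fibrewise pullbacks. The difference is in the technical vehicle. The paper argues in two separate steps: it first notes that the Cartesian-transport square over $u$ is already a fibre product in $\int_I D$ (pullback along Cartesian arrows), and then shows that a fibrewise pullback of a marked arrow remains a pullback in the total category, by hand, via the decomposition $\Map(w,x\times_y z)=\coprod_{u:pw\rightarrow px}\Map_u(w,x\times_y z)$ and the identification of each $\Map_u(w,-)$ with the fibrewise mapping space $\Map_{pw}(w,u^!-)$; pasting the two kinds of squares (left implicit) then gives base change along arbitrary maps. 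You instead treat the mixed cospan $x\to y\leftarrow z$ in a single stroke by invoking the relative-limit machinery of \cite[\S 4.3.1]{HTT}: a fibrewise pullback preserved by all transition functors is a $p$-limit, and a $p$-limit lying over a limit cone in $I^\op$ is an honest limit. When that black box is unwound via mapping spaces, it is precisely the computation the paper performs explicitly, so your route trades the paper's self-contained, homotopy-invariant argument (consistent with its stated policy and the caveat in the remark following Lemma \ref{CAT_FIB_LEMMA} about reinterpreting \cite[\S 4.3.1]{HTT} invariantly) for a citation, while absorbing the Cartesian-transport step and the pasting into one application of the criterion. Both proofs then obtain the second assertion identically: when $g$ is vertical the construction specializes to the fibrewise pullback, so each fibre inclusion preserves base change.
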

\begin{proof}
If $u:i\rightarrow j$ is a morphism in $I$ and $f:x\rightarrow y$ belongs to $S_{D_j}$, then it follows from the Cartesian properties that
\[\xymatrix{
  u^!x\ar[r]^-{u^x} \ar[d]_{u^!f}	& x\ar[d] \\
  u^!y\ar[r]^-{u^y}		& y
}\]
is a fibre product in $\int_{I^\op}D$. That is, the pullback $u^!f$ of $f$ belongs to $S_{D_i}$.

It remains to observe that a pullback of an element of $S_{D_i}$ in its fibre remains a pullback in the total space of the fibration. Let $x\times_yz:D_i$ be such a pullback, and let $w$ be a cone in $\int_{I^\op}D$ over $x\rightarrow y\leftarrow z$. All projections from $w$ map to the same morphism $u$ in the base. Pulling back along $u$ we identify the two squares
\[\xymatrix{
\Map_u(w,x\times_yz) \ar[r]\ar[d] & \Map_u(w,x)\ar[d] 
  & \Map_{pw}(w,u^!(x\times_yz)) \ar[r]\ar[d] & \Map_{pw}(w,u^!x)\ar[d] \\
\Map_u(w,z)\ar[r]			& \Map_u(w,z)\ar@{}[ur]|\cong
  & \Map_{pw}(w,u^!z)\ar[r]			& \Map_{pw}(w,u^!z)
}\]
and since $u^!$ preserves pullbacks of elements of $S$, the right square is a pullback, so the left square is too. The result follows now from $\Map(w,x\times_yz)=\coprod_{u:pw\rightarrow px}\Map_u(w,x\times_yz)$.
\end{proof}

\begin{remark}

The second half of the proof may also be seen as an application of \cite[Prop.~4.3.1.10]{HTT}.\footnote{Thanks to an anonymous referee for drawing this to my attention.} 
More precisely, \emph{loc.~cit}.~tells us that pullbacks in the fibres remain \emph{$p$-limits}.
We can argue that these are moreover \emph{limits} in $\int_{I^\op}D$ using the facts that the index category of pullbacks has contractible nerve and that the diagrams we are interested project to a point in $I^\op$.

\end{remark}

\begin{prop}[Integral of a family of bivariant functors with base change]
\label{UNIV_CART_BIV}

Let $H:D\rightarrow K$ be an $I$-indexed family of right bivariant functors with base change. Then the contravariant Grothendieck integral $\int_{I^\op}H:\int_{I^\op}D\rightarrow \int_{I^\op}K$ is right bivariant with base change.
\end{prop}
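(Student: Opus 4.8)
The plan is to verify directly the two defining properties of Definition \ref{BIV_FUN_DEF} for $\int_I H$, exploiting the fibred structure. Both $p:\int_I D\to I^\op$ and $\int_I K\to I^\op$ are (2-)Cartesian fibrations, and $\int_I H$, being the image of a natural transformation under the (2-)Grothendieck equivalence \ref{UNIV_GROT_HYP}, is a map over $I^\op$ that preserves Cartesian morphisms and restricts over each $i:I$ to the bivariant functor $H_i:D_i\to K_i$. I will use repeatedly that, by Proposition \ref{UNIV_CART_MARK}, the marking of $\int_I D$ is the union of the fibrewise markings — so every marked morphism lies in a single fibre $D_i$ — and that the inclusion of a fibre and each Cartesian transport functor $u^!$ of a 2-Cartesian fibration are 2-functors (a consequence of \ref{UNIV_GROT_CART} and \ref{UNIV_GROT_HYP}).

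Bivariance is then immediate. A marked morphism $f$ of $\int_I D$ is a marked morphism of some fibre $D_i$, so $\int_I H(f)=H_i(f)$ is the image under the fibre inclusion $K_i\hookrightarrow\int_I K$ of a morphism admitting a right adjoint in $K_i$. Since a 2-functor preserves adjunctions (\ref{BIV_FUN_COMP}), that image admits a right adjoint in $\int_I K$, and $\int_I H$ is right bivariant.

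For base change I would first cut down the shape of the relevant squares. Let $g:x\to z$ be marked, so $x,z$ lie in a common fibre $D_j$ with $g\in S_{D_j}$, and let $f:y\to z$ be arbitrary, covering some $w:i\to j$. Factoring $f$ through the Cartesian lift $w^z:w^!z\to z$ exhibits the pullback square of $g$ along $f$ as a horizontal pasting of two squares: a \emph{transport square}, the pullback of $g$ along the Cartesian morphism $w^z$ (whose result is the transported marked morphism $w^!g$ in $D_i$, exactly as computed in the proof of \ref{UNIV_CART_MARK}), and a \emph{fibrewise square}, the pullback of $w^!g$ along $f':y\to w^!z$ taken entirely within $D_i$. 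Because the Beck--Chevalley mate of \ref{BIV_BC_MATE} is functorial under horizontal pasting, it suffices to treat these two cases. The fibrewise square is harmless: pullback, adjoints, and mate are all computed inside $D_i$ and $K_i$, where base change holds since $H_i$ has base change, and the 2-functor $K_i\hookrightarrow\int_I K$ preserves adjunctions, mates, and equivalences (\ref{BIV_FUN_COMP}), so the image square is Beck--Chevalley in $\int_I K$.

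The transport square is the \textbf{main obstacle}. Its horizontal edges $\int_I H(w^x),\int_I H(w^z)$ are Cartesian morphisms of $\int_I K$ and its vertical edges are $H_j(g)$ and $H_i(w^!g)$. My argument would be that the Cartesian edges assemble the transport functor $w^!_K:K_j\to K_i$ into a pseudonatural comparison between the two 2-functors $K_j\to\int_I K$ given by the fibre inclusion over $j$ and by transport-then-include over $i$; since $w^!_K$ is a 2-functor it carries the adjunction $H_j(g)\dashv H_j(g)^!$ to an adjunction, identifying $(w^!g)^!$ with the transport of $g^!$, and pseudonaturality of the comparison against both $g$ and $g^!$ identifies the Beck--Chevalley mate with the invertible naturality isomorphism. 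The delicate point — and where the 2-Cartesian structure \ref{UNIV_GROT_CART} is genuinely needed — is that this comparison must be pseudonatural \emph{at the level of 2-cells}, in particular against the unit and counit of the adjunction, so that the mate assembled from those 2-cells truly coincides with the naturality equivalence rather than merely sharing its source and target. Establishing this compatibility is the technical heart of the proof; once it is in place, pasting the two resolved cases yields base change for $\int_I H$.
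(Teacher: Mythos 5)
Your proposal follows essentially the same route as the paper's proof: bivariance and fibrewise base change are checked fibre by fibre (using Prop.~\ref{UNIV_CART_MARK} to see that fibrewise pullbacks remain pullbacks in $\int_ID$), an arbitrary base-change square is factored through a Cartesian lift into a fibrewise square and a transport square, and the transport square is resolved by showing its conjugate 2-cell is invertible. What you defer as the ``technical heart'' --- the compatibility of the mate with the Cartesian-transport comparison at the level of 2-cells --- is precisely the step the paper carries out concretely, by factorising the construction of the conjugate mapping over an explicit diagram whose right-hand diamond commutes by the zig-zag identities for the adjunction $f_!\dashv f^!$.
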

\begin{proof}

Because marked arrows of $\int_{I^\op}D$ are contained in fibres of $p$, bivariance of $\int_{I^\op}H$ can itself be checked on fibres, as can base change along a map contained in a fibre (which by Prop.~\ref{UNIV_CART_MARK} remains a base change in $\int_{I^\op}D$). It remains to check the Beck-Chevalley condition for Cartesian squares of the form
\[\xymatrix{
u^!x \ar[r]^{u^x} \ar[d] & x \ar[d]^f \\
u^!y \ar[r]^{u^y} & y
}\] where $u:i\rightarrow j$ in $I^\op$, $[f:x\rightarrow y]\in S$, and the horizontal arrows are Cartesian over $u$. Explicitly, we are checking that the composite 2-cell implied by the diagram
\[\xymatrix{
H_ju^!y \ar[r] \ar@{=}[dr] & H_ju^!x \ar[r] \ar[d] & H_ix \ar[d] \ar@{=}[dr] \\
& H_ju^!y \ar[r] & H_iy \ar[r] 
  \ar@{}[llu]|(.7)\Leftarrow & H_ix \ar@{}[llu]|(.3)\Leftarrow
}\]
is invertible.

We will embed the left and middle cells of the above diagram into one of the form $J\times \Delta^1\rightarrow K$, where $J$ is a 2-category indexing the left-hand triangle (several choices for $J$ exist; for example, we could take $J=[1,1]^\op$ as in \ref{BIV_BC_MATE}).
We achieve this by identifying $u^!H_i\cong H_j u^!:D_i\rightarrow K_j$, using that $H$ is a natural transformation, and applying a lemma:

\begin{lemma}

  Let $E\rightarrow D$ be a 2-Cartesian fibration over a 1-category $D$, $f:x\rightarrow y$ a morphism in $D$, and $\phi:J\rightarrow E_x$ a 2-functor.
  Then there is a unique 2-functor $\tilde\phi:J\times\Delta^1\rightarrow E$ that for all $j:J$, $\tilde\phi$ sends $\{j\}\times \Delta^1$ to a 2-Cartesian arrow in $E$ starting at $\phi(j)$.

\end{lemma}
\begin{proof}

  First, by pulling back along the map $\Delta^1\rightarrow D$ represented by $f$, we may assume $D=\Delta^1$ and $f$ is the unique non-invertible morphism $0\rightarrow 1$. 
  Applying the Grothendieck construction, since $0$ is initial in $\Delta^1$, there is a unique morphism from the constant functor $\Delta^1\rightarrow 2\Cat$ with value $J$ into $\fibre_E$ that restricts to $\phi$ at $0$.
  Passing back to the fibrations side, we find that similarly there is a unique 2-Cartesian transformation $J\times \Delta^1\rightarrow E$
  restricting to $\phi$ over $0$.
\end{proof}

Applying the lemma to $J$ as above, we find a diagram of the form
\[\xymatrix{
  u^!H_iy \ar[r] \ar@{=}[dr] \ar[rrd]|(.3)\Leftarrow^(.7){u^{H_iy}} & 
    u^!H_ix \ar@{..>}[d]|\hole \ar[rrd] 
\\
  & u^!H_iy \ar[rrd] &
      H_iy \ar[r]|{f^!} \ar@{=}[dr] &
        H_ix \ar[d] |{f_!} \ar@{=}[dr] 
\\
   &&
      & H_iy \ar[r]|{f^!} &
          H_ix; \ar@{}[llu]|(.3)\Leftarrow|(.7)\Leftarrow
}\]
moreover, using the uniqueness clause, we can identify the `rear' square panel with the central commuting square of the diagram defining the conjugate 2-cell.

Now, by the zig zag identities for the adjunction $f_!\dashv f^!$, the endomorphism of $f^!:K_j(H_jy,H_jx)$ implied by the bottom-right diamond is equivalent to the identity; hence, the same is true of the conjugate transformation which is obtained by precomposing with $u^{H_jy}$.
\end{proof}

\begin{lemma}[Spans on a Cartesian fibration]
\label{UNIV_CART_CART}

The induced projection $\Spanext(p):\Cor(\int_{I^\op} D)\rightarrow {I^\op}$  is a 2-Cartesian fibration with 2-Cartesian arrows the images of those in $\int_{I^\op}D$.
\end{lemma}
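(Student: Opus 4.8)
The plan is to recognise $p$ as a bivariant functor, so that $\Spanext(p)$ is defined, and then to exhibit explicit $2$-Cartesian lifts and check the defining property of \ref{UNIV_GROT_CART} in its dual (Cartesian) form. Since the marked arrows of $\int_ID$ lie in the fibres of $p$ (Prop.~\ref{UNIV_CART_MARK}), $p$ carries them to identities of the base $I^\op$; identities are self-adjoint and base change over them is vacuous, so $p$ is right bivariant with base change and $\Spanext(p)\colon\Cor(\int_ID)\to I^\op$ is defined by Theorem~\ref{BIV_EXT_THM}. (It is the contravariant integral --- the Cartesian fibration $p\colon\int_ID\to I^\op$ with pullback functors $u^!$ of Prop.~\ref{UNIV_CART_MARK} --- that carries the fibrewise marking with base change, and it is precisely its Cartesian arrows that I shall use.) On objects $\Spanext(p)$ is $p$, and by Example~\ref{BIV_LOC_EX} its action on mapping categories is the local representation of spans induced by functoriality of $(-)^\Lambda$. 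A span lands in the fibre over $i$ exactly when its right-way leg lies over $\iden_i$ (its wrong-way leg being fibrewise in any case), so this fibre is $\Cor(D_i)$; base change (Prop.~\ref{UNIV_CART_MARK}) is what guarantees that $\Cor(\int_ID)$ and all these fibres are well defined.

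Next, for $\bar u\colon a\to b$ in $I^\op$ and an object $(b,y)$ over $b$, I would take the $p$-Cartesian arrow $u^y\colon u^!y\to y$ of Prop.~\ref{UNIV_CART_MARK} and form its right-way span $(u^y)_!=[u^!y\xleftarrow{=}u^!y\xrightarrow{u^y}y]$, a morphism of $\Cor(\int_ID)$ over $\bar u$ ending at $(b,y)$. These are exactly the images of the $p$-Cartesian arrows under the embedding $\int_ID\hookrightarrow\Cor(\int_ID)$, $f\mapsto f_!$; the transition $2$-functor they induce sends a span $[y_0\leftarrow w\to y_1]$ in $D_b$ to $[u^!y_0\leftarrow u^!w\to u^!y_1]$, i.e.\ to $\Cor(u^!)$. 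I claim each $(u^y)_!$ is $2$-Cartesian. Composition in $\Cor$ with $(u^y)_!$ is computed by a fibre product over the identity of $u^!y$, hence leaves the wrong-way leg and the $2$-cells of any span untouched and merely postcomposes the right-way leg with $u^y$. Consequently, for any $Z=(k,z)$ the square
\[\xymatrix{
\Cor(\int_ID)(Z,(a,u^!y)) \ar[r]^-{(u^y)_!\circ-}\ar[d] & \Cor(\int_ID)(Z,(b,y)) \ar[d] \\
I^\op(k,a) \ar[r]^-{\bar u\circ-} & I^\op(k,b)
}\]
is the square required by \ref{UNIV_GROT_CART} (dualised), and the $1$-categorical universal property of the Cartesian arrow $u^y$ (\ref{CAT_GROT_PROPS}, \emph{Morphisms}) identifies the right-way legs into $(b,y)$ whose base image factors through $\bar u$ with the right-way legs into $(a,u^!y)$. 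As this identification is natural in the (unchanged) wrong-way leg and compatible with the base projection, the square is a pullback of $1$-categories, so $(u^y)_!$ is $2$-Cartesian, $\Spanext(p)$ is a $2$-Cartesian fibration, and its $2$-Cartesian arrows are exactly these right-way spans on the $p$-Cartesian arrows of $\int_ID$.

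The hard part will be the last verification: checking that composition with $(u^y)_!$ really is computed by the trivial fibre product, and that the Cartesian bijection on right-way legs is natural in the wrong-way leg and compatible with the $2$-cells of spans, so that one obtains a pullback of the full mapping $1$-categories and not merely of their sets of objects. Once this bookkeeping is secured, the reduction to the $1$-categorical property of $u^y$ closes the argument.
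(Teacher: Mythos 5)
Your route is the paper's own: take the $p$-Cartesian arrows $u^y:u^!y\rightarrow y$ of Prop.~\ref{UNIV_CART_MARK}, embed them as right-way spans, and reduce the 2-Cartesian pullback square to the 1-categorical Cartesian property of $u^y$ acting on right-way legs, the wrong-way leg being confined to a fibre and left untouched. Your preliminaries are all sound: $p$ is bivariant with base change because marked arrows map to identities (the paper's criterion for bivariant functors into a 1-category), the identification of the mapping-level map via Example~\ref{BIV_LOC_EX}, and the computation of composition with $(u^y)_!$ as a trivial fibre product via Prop.~\ref{BIV_COMP_PROP}.

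However, the step you explicitly defer --- promoting the ``Cartesian bijection on right-way legs, natural in the wrong-way leg'' to a pullback of mapping \emph{1-categories} rather than of their spaces of objects --- is precisely the one piece of real work in the paper's proof, and the paper closes it without any pointwise naturality verification, by a factorisation trick you should adopt. Since $S_I$ is trivial, $I^\Lambda\cong I^{\Delta^1}$, so the projection $\Cor_{\int_ID}(w,x)\rightarrow I(pw,px)$ factors through forgetting the wrong-way leg. Writing the span category as the iterated fibre product $\{w\}\times_D D^{\Delta^\sharp}\times_D D^{\Delta^1}\times_D\{x\}$ (with $D$ abusively denoting $\int_ID$), the square to be checked factors as two stacked squares: the lower one,
\[
\xymatrix{
D^{\Delta^1}\times_D\{x\}\ar[r]\ar[d] & D^{\Delta^1}\times_D\{y\}\ar[d]\\
I(pw,px)\ar[r] & I(pw,py)
}
\]
is a pullback exactly because $u$ is Cartesian in $\int_ID$; and the upper one is a pullback \emph{by construction}, since both of its top corners are obtained from the bottom corners by pulling back along the same functor $\rm{tar}:\{w\}\times_DD^{\Delta^\sharp}\rightarrow D$. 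The compatibility with the wrong-way factor and with 2-cells that worries you is thus automatic: the wrong-way data is carried along as a pullback of the right-way comma categories, so no naturality needs to be checked by hand. With that substitution in place of your final paragraph, your argument is complete (modulo the cosmetic point that the base is $I^\op$ in the conventions of Prop.~\ref{UNIV_CART_MARK}, a discrepancy already present in the statement of the lemma).
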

\begin{proof}
If $u:x\rightarrow y$ is a Cartesian arrow in $\int_{I^\op}D$, then we wish to prove that
\[\xymatrix{
  \Cor(w,x)\ar[r]^{u\circ-}\ar[d] & \Cor(w,y) \ar[d] \\
  I(pw,px) \ar[r] & I(pw,py)
}\]
is a pullback in $1\Cat$ for each $w:D$, where $\Cor_D(-,-)\rightarrow I(p-,p-)$ is the induced representation, which by Example \ref{BIV_LOC_EX} is induced by functoriality of $(-)^\Lambda$.

Observe that because $I^\Lambda\cong I^{\Delta^1}$ the induced representation $D^\Lambda\rightarrow I^\Lambda= I^{\Delta^1}$ factors through projection $D^\Lambda\rightarrow D^{\Delta^1}$ on the right-way map. We may therefore factor the above square as:
\[\xymatrix{
  \{w\} \times_D D^{\Delta^\sharp} \times_D D^{\Delta^1} \times_D \{x\} \ar[d]\ar[r] &
    \{w\} \times_D D^{\Delta^\sharp} \times_D D^{\Delta^1} \times_D \{y\} \ar[d] 
\\
  D^{\Delta^1} \times_D \{x\}   \ar[r] \ar[d] &
    D^{\Delta^1} \times_D \{y\}   \ar[d] 
\\
  I(pw,px)\ar[r] & I(pw,py)  
}\]
Now, the upper square is Cartesian because it is obtained by pullback along $\rm{tar}:\{w\}\times_D D^{\Delta^\sharp}\rightarrow D$, and the lower is Cartesian because $u$ is a Cartesian arrow in $D$.
\end{proof}

\begin{prop}[Correspondences commute with Grothendieck integration]
\label{UNIV_CART_GROT}

The contravariant integral of the universal bivariant extension
\[ \textstyle \int_{I^\op}h: \int_{I^\op}D \rightarrow \int_{I^\op}\Cor_D \]
is itself a universal bivariant extension.

If $g:\int_{I^\op} D\rightarrow K$ is a bivariant functor over $I^\op$, and $g$ preserves the set of arrows Cartesian over some fixed arrow $\phi:I$, then so too does the span extension $\int_{I^\op}\Cor_D\rightarrow K$ of $g$.
\end{prop}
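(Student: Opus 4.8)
The plan is to deduce both assertions from the fibration structure of Lemma \ref{UNIV_CART_CART} together with the criterion for a category of correspondences, Definition \ref{BIV_LOC_CORR}. For the first assertion I would exhibit $\int_I h$ as a $2$-category of correspondences for $\int_I D$; once this is done, Theorem \ref{UNIV_EXT_THM} immediately upgrades it to a $3$-universal bivariant extension. By Proposition \ref{UNIV_CART_BIV} the functor $\int_I h$ is already right bivariant with base change, and it is the identity on objects because $\Ob(\Cor_{D_i})=\Ob(D_i)$ fibrewise, hence essentially surjective; so by Definition \ref{BIV_LOC_CORR} the only remaining point is that the local representation of spans induces an equivalence
\[ \Cor_{\int_I D}\big((i,x),(j,y)\big)\;\tilde\rightarrow\;\big(\textstyle\int_I\Cor_D\big)\big((i,x),(j,y)\big) \]
on each mapping category. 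This is the family analogue of the product computation \ref{UNIV_EXT_PROJ}, and I would run it the same way.

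First I would compute the left-hand side. A span in $\int_I D$ from $(i,x)$ to $(j,y)$ has, by Proposition \ref{UNIV_CART_MARK}, a marked and hence fibrewise left leg, so it consists of a marked arrow $w\to x$ in $D_i$ together with an arbitrary right leg $(i,w)\to(j,y)$; factoring the latter through a Cartesian arrow of $\int_I D$ as in Lemma \ref{UNIV_CART_CART} records a morphism $u$ of $I$ together with a fibrewise right leg. For the right-hand side I would read off the mapping category of the $2$-Grothendieck integral \ref{UNIV_GROT_HYP}, using that the fibre of $\int_I\Cor_D$ over $i$ is $\Cor_{D_i}$ and that its transport $2$-functors are the $\Cor(D(u))$ induced by functoriality of $(-)^\Lambda$ (Example \ref{BIV_LOC_EX}); this produces the same decomposition, indexed by arrows $u$ of $I$, of a fibrewise span category. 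Comparing the two under the local representation — which by Example \ref{BIV_LOC_EX} is exactly the map induced by functoriality of $(-)^\Lambda$ — reduces the claim to the fibrewise statement that each $h_{D_i}\colon D_i\to\Cor_{D_i}$ is a category of correspondences, which holds by Corollary \ref{BIV_LOC_IMAGE}. Conceptually this says the fibre-transport $2$-functor of the $2$-Cartesian fibration $\Cor(\int_I D)\to I$ of \ref{UNIV_CART_CART} is $\Cor_D$ itself, so that $\Cor(\int_I D)\cong\int_I\Cor_D$ under $\int_I D$; since the left-hand side is visibly a universal bivariant extension of $\int_I D$, so is $\int_I h$.

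The second assertion then follows quickly. Writing $\Spanext(g)\colon\int_I\Cor_D\to K$ for the span extension of $g$, Theorem \ref{BIV_EXT_THM} tells us that its restriction along $\int_I h$ is $g$. By Lemma \ref{UNIV_CART_CART} every arrow of $\int_I\Cor_D$ that is $2$-Cartesian over $\phi$ is the image under $\int_I h$ of a $\phi$-Cartesian arrow of $\int_I D$; applying $\Spanext(g)$ to such an arrow therefore returns the value of $g$ on that arrow, which is $\phi$-Cartesian in $K$ by hypothesis. Hence $\Spanext(g)$ preserves $\phi$-Cartesian arrows.

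I expect the main obstacle to be the mapping-category identification of the first two paragraphs: making the variance bookkeeping of the conditional $2$-Grothendieck construction \ref{UNIV_GROT_HYP} match the fibrewise span decomposition, and in particular verifying that the resulting comparison genuinely is the local representation of $\int_I h$ rather than merely some abstract equivalence. Everything else — bivariance and base change, the count of objects, and the Cartesian-arrow argument for the second part — is either quoted from \ref{UNIV_CART_BIV}, \ref{UNIV_CART_MARK} and \ref{UNIV_CART_CART}, or is formal.
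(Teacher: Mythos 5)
Your proposal is correct in outline, and your second part coincides with the paper's one-line argument, but your first part takes a genuinely different route --- in fact your closing ``conceptual'' remark \emph{is} the paper's actual proof, while your detailed plan is a hands-on substitute for it. The paper never verifies Definition \ref{BIV_LOC_CORR} for $\int_Ih$ by computing span categories. Instead: $\int_Ih$ is bivariant with base change (Prop.~\ref{UNIV_CART_BIV}), so Theorem \ref{UNIV_EXT_THM}, applied to $\int_ID$, produces a comparison $\Spanext(\int_Ih):\Cor(\int_ID)\rightarrow\int_I\Cor_D$; its restriction to each fibre is an isomorphism not by any mapping-category computation but because both the composite $\Cor(D_i)\rightarrow\Cor(\int_ID)\rightarrow\int_I\Cor_D$ and the fibre inclusion extend $h_{D_i}$, and extensions off $\Cor(D_i)$ are unique by the universal property; Lemma \ref{UNIV_CART_CART} (the 2-Cartesian fibration structure) then upgrades the fibrewise isomorphism to a global equivalence, and universality of $\int_Ih$ is transported along it. This is precisely how the paper sidesteps what you rightly flag as your main obstacle --- checking that your hand-built equivalence of mapping categories is the local representation of $\int_Ih$ rather than an abstract one: in the paper that identification is absorbed into the uniqueness clause of Theorem \ref{UNIV_EXT_THM}, which is what that clause is for. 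Your more computational route remains viable --- the needed compatibility follows from factoring the local representation as $q_!p^!=(u^y)_!\,(q')_!\,p^!$ through the image of a Cartesian lift, together with the fact that 2-functors preserve adjunctions so fibrewise adjoints compute total ones --- and it buys an explicit description of the mapping categories of $\Cor(\int_ID)$, at the cost of exactly the variance bookkeeping you anticipate. Finally, two citation slips: the factorisation of the right leg of a span through a Cartesian arrow of $\int_ID$ comes from the Cartesian fibration structure of $\int_ID$ itself (Prop.~\ref{UNIV_CART_MARK} and the Grothendieck construction), not from Lemma \ref{UNIV_CART_CART}, which concerns $\Cor(\int_ID)\rightarrow I$; likewise, in part two, the description of the 2-Cartesian arrows of $\int_I\Cor_D$ as images of Cartesian arrows of $\int_ID$ is what the paper asserts directly from the Grothendieck construction and naturality of $h$, Lemma \ref{UNIV_CART_CART} giving only the analogous statement for $\Cor(\int_ID)$.
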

\begin{proof}

By Proposition \ref{UNIV_CART_BIV}, $\int_{I^\op}h$ is bivariant, and so we deduce from the universal property of correspondences (Thm.~\ref{UNIV_EXT_THM}) a commuting diagram
\[\xymatrix{
D_i \ar[d]\ar[r] & \Cor(D_i) \ar@{=}[r] \ar[d] & \Cor(D_i) \ar[d] \\
\int_{I^\op}D \ar[r]\ar[dr] & \Cor(\int_{I^\op}D) \ar[r]^-{\Spanext{\int_{I^\op}h}}\ar[d] & \int_{I^\op}\Cor_D \ar[dl]
\\
 & {I^\op}
}\] 
(for any $i:I$). By the top-right square, the restriction of $\Spanext(\int_{I^\op}h)$ to each fibre is an isomorphism. Since by Lemma \ref{UNIV_CART_CART}, $\Cor(\int_{I^\op}D)$ is 2-Cartesian fibred over $I$, we deduce that $\Spanext(\int_{I^\op}h)$ is an equivalence. The last statement follows because the Cartesian arrows in $\int_{I^\op}\Cor_D$ are exactly the images of Cartesian arrows in $D$.
\end{proof}

\begin{remark}
The results of this section admit dual statements for co-Cartesian fibrations in marked categories with collar change and right bivariant functors with collar change.
\end{remark}

\subsection{Monoidal structure}
\label{UNIV_MON}

To upgrade Theorem \ref{UNIV_EXT_THM} to a monoidal version we just need it to work in families indexed by the commutative operad. We laid the groundwork for this in \S\ref{BIV_UNIV}.

\begin{para}[Commutative monoids]
\label{UNIV_MON_DEF}

Let $\Fin$ denote the category of finite sets. When necessary, we consider $\Fin$ as a marked category with base change by marking all morphisms. A commutative monoid in a 1-category $C$ is the data of a product-preserving 1-functor $\lie c_1\Cor(\bf{Fin})\rightarrow C$. (Note that $\lie c_1\Cor(\bf{Fin})$ is 2-truncated, hence we consider it here as a $(2,1)$-category.) The 1-category of commutative monoids in $C$ is denoted $\CMon(C)$.

(This is not the same definition as the one that appears in \cite{HA}, but is equivalent via a natural embedding of Segal's category $\Gamma$ into $\lie c_1\Cor(\bf{Fin})$ \cite[Prop.~C.1]{bachmann2020norms}.)\footnote{Thanks to an anonymous referee for indicating the reference to me.}

\end{para}

\begin{para}[Enrichment, tensoring, and powering of commutative monoids]
\label{UNIV_MON_EN}

Let $K$ be an $n$-category with finite products. The 1-category $\CMon(K)$ of commutative monoids in $K$ inherits an $n$-categorical enrichment as a full $n$-subcategory of $n\Fun(\lie c_1\Cor(\Fin), K)$.

Similarly, if $K$ is $n$-powered over $(n-1)\Cat$ \eqref{CAT_BOOT_EX}, and this powering preserves finite products (which implies that they are actually limits in the $n$-categorical sense as in \ref{EX_MON_nCAT}), then $\CMon(K)$ too inherits an $n$-powering in the same manner. 

In the case $K=n\Cat$ we write $n\Fun^\otimes(-,-)$ for mapping spaces in $\CMon(n\Cat)$. The $(n+1)$-functor $n\Fun^\otimes(A,-):n\Cat^\otimes\rightarrow n\Cat$ is $n$powered over $n\Cat$ by the argument of \ref{CAT_BOOT_EX}. This makes the bootstrap argument of Lemma \ref{CAT_BOOT_SUBREP} available in the monoidal setting.
\end{para}

\begin{para}[Marked symmetric monoidal categories and base change]

A commutative monoid in $1\Cat^{+}$ consists of the data of a symmetric monoidal category $(D, \otimes)$ equipped with a marking $S_D$ of $D$ which is stable under tensor product.

It is further a commutative monoid in the subcategory $1\Cat^{+r}$ if $S_D$ has base change and pullbacks of members of $S_D$ are preserved by tensor product. This condition is satisfied, for example, in Cartesian monoidal structures (with any marking) --- compare example \ref{EX_MON_SPANS}.

A \emph{symmetric monoidal bivariant functor} (with base change) $D\rightarrow K$ is a symmetric monoidal functor $(D,\otimes)\rightarrow (K,\otimes)$ whose underlying functor is bivariant (with base change). A \emph{symmetric monoidal base change exact natural transformation} is a symmetric monoidal natural transformation (between symmetric monoidal bivariant functors) whose underlying natural transformation is base change exact. The subcategory of $2\Fun^\otimes(D,K)$ spanned by bivariant functors with base change and base change exact natural transformations is denoted $\Biv^\otimes(D, K)$.
\end{para}

\begin{defn}[Monoidal universality]
\label{UNIV_MON_UNIV_DEF}

A symmetric monoidal functor $D\rightarrow K$ is said to be an $n$-\emph{universal monoidal bivariant extension}, where $n:\{1,2,3\}$, if it induces an $n$-natural equivalence
\[ 2\Fun^\otimes(K, K') \tilde\rightarrow \Biv^\otimes(D, K') \]
for any $K':2\Cat$. In other words, $H$ is universal if it is universal as a $\lie c_1\Cor(\Fin)$-indexed family of bivariant extensions in the sense of Def.~\ref{BIV_UNIV_DEF}.
\end{defn}

\begin{lemma}[Correspondences commutes with limits]
\label{UNIV_MON_PROD}

The 1-functor $\Cor: 1\Cat^{+r} \rightarrow 2\Cat$ preserves limits.
\end{lemma}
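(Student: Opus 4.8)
The plan is to prove limit-preservation directly through the recognition principle for limit cones, Lemma \ref{CAT_FURTHER_LIMIT}. Let $D:I^\triangleleft\rightarrow 1\Cat^{+r}$ be a limit cone with cone point $D_0=\lim_{i:I}D_i$; I must show that the cone $\Cor\circ D:I^\triangleleft\rightarrow 2\Cat$, obtained by applying $\Cor$ and the natural transformation $h$ of Theorem \ref{UNIV_EXT_THM}, is again a limit cone. By Lemma \ref{CAT_FURTHER_LIMIT} applied with $n=2$, it is enough to check two things: that $\Ob(\Cor_{D_\bullet}):I^\triangleleft\rightarrow\Spc$ is a limit cone of spaces, and that for each pair of objects $x,y$ of the cone point $\Cor_{D_0}$ the cone $\Cor_{D_\bullet}(x,y):I^\triangleleft\rightarrow 1\Cat$ is a limit cone.

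First I would pin down how limits in $1\Cat^{+r}$ are computed. By Remark \ref{BIV_BV_2CAT}, $1\Cat^{+r}$ is closed under limits in $1\Cat^+\subseteq 1\Cat^{\Delta^1}$, where limits are computed pointwise; hence the underlying $1$-category of $D_0$ is $\lim_i D_i$ taken in $1\Cat$, with marking $\lim_i S_{D_i}$. The objects condition is then immediate: since the objects of a category of correspondences are the objects of the source (Definition \ref{BIV_LOC_CORR}), the transformation $h$ identifies $\Ob(\Cor_{D_\bullet})$ with $\Ob(D_\bullet)$, and the functor $\Ob=\lie c_0$, being right adjoint to the inclusion $\Spc\hookrightarrow 1\Cat$ (\ref{CAT_FURTHER_CORE}), preserves limits. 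Thus $\Ob(\Cor_{D_\bullet})$ is a limit cone of spaces.

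The substance is the mapping-category condition. By Definition \ref{BIV_LOC_CORR} together with \ref{BIV_YON_SPANS}, the mapping-category bifunctor of $\Cor_D$ is naturally the fibre $\Cor_D(x,y)=\{(x,y)\}\times_{D\times D}D^\Lambda$ of the span projection $D^\Lambda\rightarrow D\times D$, and by Example \ref{BIV_LOC_EX} the functor induced on mapping categories by a marked functor with base change is exactly the one arising from the functoriality of $(-)^\Lambda$; so the cone $i\mapsto\Cor_{D_i}(\pi_i x,\pi_i y)$ (with $\pi_i:D_0\rightarrow D_i$ the projections) is identified with $i\mapsto\{(\pi_i x,\pi_i y)\}\times_{D_i\times D_i}D_i^\Lambda$. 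Now $(-)^\Lambda=1\Fun^+(\Lambda,-)$ preserves limits — directly, since by the pullback square of \ref{BIV_YON_FREE} it is assembled from the cotensors by $\Delta^1$ and $\Delta^\sharp$ by a fibre product, all of which preserve limits — so it carries the limit cone $D_\bullet$ to a limit cone with $D_0^\Lambda=\lim_i D_i^\Lambda$. Since a fibre is a pullback and limits commute with limits, I obtain
\[ \Cor_{D_0}(x,y)=\{(x,y)\}\times_{D_0\times D_0}D_0^\Lambda\;\simeq\;\lim_{i}\left(\{(\pi_i x,\pi_i y)\}\times_{D_i\times D_i}D_i^\Lambda\right)=\lim_i\Cor_{D_i}(\pi_i x,\pi_i y), \]
which is precisely the assertion that $\Cor_{D_\bullet}(x,y)$ is a limit cone. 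With both conditions verified, Lemma \ref{CAT_FURTHER_LIMIT} finishes the proof.

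The main obstacle I anticipate is bookkeeping the naturality in the diagram variable $i$: one must know not merely that each $\Cor_{D_i}(x,y)$ abstractly agrees with the span fibre, but that this identification is compatible with the transition functors induced by the projections $\pi_i$, so that the two cones over $I$ genuinely coincide as cones. This is exactly what the description of the induced representation through functoriality of $(-)^\Lambda$ in Example \ref{BIV_LOC_EX} supplies; once that compatibility is secured, the remaining steps are formal, resting only on the facts that the core and the cotensor $(-)^\Lambda$ preserve limits and that pullbacks commute with $I$-indexed limits.
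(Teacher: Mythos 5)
Your proposal is correct and follows essentially the same route as the paper's proof: reduce via the recognition principle (Lemma \ref{CAT_FURTHER_LIMIT}) to checking objects and mapping categories, handle objects by the fact that $\Ob$ agrees on $D$ and $\Cor_D$ and commutes with limits, and handle mapping categories by identifying $\Cor_D(x,y)$ with a fibre of the span fibration $D^\Lambda\rightarrow D\times D$, which commutes with limits because $(-)^\Lambda$ is an exponential. Your explicit appeal to Example \ref{BIV_LOC_EX} to secure naturality of this identification in the diagram variable is exactly what the paper compresses into the word ``naturally''.
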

\begin{proof}
Let $D:I \rightarrow 1\Cat^{+r}$ be a 1-functor. Since $\Ob (\lim_{i:I} D_i)=\lim_{i:I}(\Ob(D_i)) = \lim_{i:I}(\Ob(\Cor_{D_i}))$, it will suffice to check the statement on mapping spaces. That is, for $x,y:\lim_{i:I}D_i$, we have
\[ \Cor\left(\lim_{i:I}D_i\right) (x, y) = \lim_{i:I}\Cor_{D_i}(\bar x, \bar y) \]
where $\bar x$ etc.\ denotes the relevant projection of $x$. This follows from the fact that these 1-categories are naturally fibres of the universal fibration $D_i^\Lambda \rightarrow D_i\times D_i$, and this construction, being an exponential, commutes with limits.
\end{proof}

\begin{thm}[Monoidal universality]
\label{UNIV_MON_THM}

Let $D:\rm{CMon}(1\Cat^{+r})$ and let $h:D\rightarrow\Cor_D$ be a 2-category of correspondences. There is a unique symmetric monoidal structure on $\Cor_D$ and extension of $h$ to a symmetric monoidal functor $(D,\otimes)\rightarrow(\Cor_D,\otimes)$, and this functor is a 3-universal symmetric monoidal bivariant extension.

\smallskip\noindent
There is a unique 1-functor
\[ \Cor^\otimes:\rm{CMon}(1\Cat^{+r}) \rightarrow \rm{CMon}(2\Cat) \]
equipped with a symmetric monoidal natural transformation $h$ from the forget-the-marking functor such that for each $D:\rm{CMon}(1\Cat^{+r})$, $h_D:D\rightarrow \Cor_D$ is a category of correspondences. Moreover, $h$ is a 3-universal symmetric monoidal bivariant extension.
\end{thm}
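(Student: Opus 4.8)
The plan is to bootstrap everything from two facts already in place: that $\Cor:1\Cat^{+r}\to 2\Cat$ is a $1$-functor (Theorem \ref{UNIV_EXT_THM}) and that it preserves limits, in particular finite products (Lemma \ref{UNIV_MON_PROD}). By Definition \ref{UNIV_MON_DEF}, a commutative monoid in a category with finite products is exactly a product-preserving functor out of $\lie c_1\Cor(\Fin)$, so a product-preserving functor between two such categories automatically carries commutative monoids to commutative monoids. Concretely, presenting the symmetric monoidal marked category $D$ as a product-preserving functor $M_D:\lie c_1\Cor(\Fin)\to 1\Cat^{+r}$ whose value on the one-point set is $D$, the composite $\Cor\circ M_D$ is again product-preserving, hence a commutative monoid in $2\Cat$ with underlying $2$-category $\Cor_D$; this is the asserted symmetric monoidal structure. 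Setting $\Cor^\otimes:=\CMon(\Cor)$, postcomposition with $\Cor$, gives the desired $1$-functor $\CMon(1\Cat^{+r})\to\CMon(2\Cat)$.

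The monoidal enhancement of $h$ comes for free from the same observation. The forget-the-marking functor $\iota:1\Cat^{+r}\to 2\Cat$ also preserves finite products (the underlying $2$-category of a product of marked categories is the product of the underlying ones), so $\CMon(\iota)$ is defined, and the natural transformation $h:\iota\to\Cor$ of Theorem \ref{UNIV_EXT_THM} postcomposes to a natural transformation $\CMon(h):\CMon(\iota)\to\Cor^\otimes$. Its component at $D$ is a strong monoidal functor $(D,\otimes)\to(\Cor_D,\otimes)$ extending $h_D$, and since $h_D$ is a category of correspondences by hypothesis, so is this functor as a marked functor. The uniqueness of both $\Cor^\otimes$ and the monoidal $h$ I would deduce, as in Corollary \ref{UNIV_EXT_UNIQUE}, from the universal property once it is established.

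The substance is the $3$-universality, and here the key structural remark is that, by Definition \ref{UNIV_MON_UNIV_DEF}, monoidal $3$-universality of $h_D$ is \emph{literally} $3$-universality of $h$ as a $\lie c_1\Cor(\Fin)$-indexed family of bivariant extensions in the sense of Definition \ref{BIV_UNIV_DEF}: because $\CMon(2\Cat)$ is a full subcategory of $2\Fun(\lie c_1\Cor(\Fin),2\Cat)$, one has $2\Fun^\otimes(\Cor_D,K')=2\Fun_{i:I}(\Cor_{D_i},K'_i)$ and $\Biv^\otimes(D,K')=\Biv_{i:I}(D_i,K'_i)$ for $I=\lie c_1\Cor(\Fin)$. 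So the entire problem reduces to running the family version of the universal property developed in \S\ref{UNIV_EXT} for this particular index: produce the universal fibration over the relevant $\bf{Biv}$-family, reverse the Grothendieck construction to exhibit $D_i\to\Cor_{D_i}$ as a pointwise bivariant natural transformation into the identity (giving the $1$-universal statement), and then bootstrap to a $3$-natural equivalence via Lemma \ref{CAT_BOOT_LEMMA}, whose hypotheses hold because the relevant mapping objects are powered over $1\Cat$ by the monoidal powering of \ref{UNIV_MON_EN}.

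The main obstacle is exactly the index: the family constructions of \S\ref{UNIV_EXT} and the bootstrapping Lemma \ref{BIV_UNIV_BOOT} were carried out for a $1$-category of indices, whereas $\lie c_1\Cor(\Fin)$ is a genuine $(2,1)$-category. Constructing the $1$-universal extension of a family indexed by such a category — equivalently, producing the universal fibration over the appropriate $(2,1)$-category of sections and inverting it — requires the $2$-dimensional Grothendieck construction of Hypothesis \ref{UNIV_GROT_HYP} in precisely the form invoked in \S\ref{UNIV_EXT}, which is why the result carries the $(\dag)$ flag. The bootstrapping half (Lemma \ref{CAT_BOOT_LEMMA}) is already stated for higher-categorical indices, so once the $1$-universal statement over $\lie c_1\Cor(\Fin)$ is granted, the passage to full $3$-universality, and with it the uniqueness of $\Cor^\otimes$ and of the monoidal structure, is formal.
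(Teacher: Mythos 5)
Your existence construction matches the paper's in substance: defining the monoidal structure as $\Cor\circ M_D$ (equivalently, $\CMon(\Cor)$ applied to $D$) and enhancing $h$ by whiskering both rest on Lemma \ref{UNIV_MON_PROD}, and the reduction of monoidal $3$-universality to universality of a $\lie c_1\Cor(\Fin)$-indexed family is Definition \ref{UNIV_MON_UNIV_DEF} verbatim. However, your ``main obstacle'' is not an obstacle, and this is a misreading of the paper's conventions: in this paper a $(2,1)$-category \emph{is} a $1$-category, i.e.\ a $2$-truncated object of $1\Cat$ --- that is exactly why Definition \ref{UNIV_MON_DEF} passes to the core $\lie c_1\Cor(\Fin)$ instead of working with $\Cor(\Fin)$ itself. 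Consequently Definition \ref{BIV_UNIV_FAM} and Lemma \ref{BIV_UNIV_BOOT} apply verbatim with $I=\lie c_1\Cor(\Fin)$; there is no need to re-run the construction of \S\ref{UNIV_EXT} over this index, and no new instance of Hypothesis \ref{UNIV_GROT_HYP} is required. The paper's universality argument is simply: each member $D^{\times I}\rightarrow\Cor_D^{\times I}\cong\Cor_{D^{\times I}}$ of the family is $1$-universal by Theorem \ref{UNIV_EXT_THM} (which is the only place the $(\dag)$ enters), and Lemma \ref{BIV_UNIV_BOOT} then upgrades pointwise $1$-universality to $3$-universality of the family.

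The genuine gap is the uniqueness clause. The theorem asserts that the symmetric monoidal structure on $\Cor_D$, together with the monoidal extension of $h$, is unique among \emph{all} such pairs, not merely among the universal ones; so deferring to ``uniqueness of universal objects as in Corollary \ref{UNIV_EXT_UNIQUE}'' begs the question --- you must first show that an \emph{arbitrary} symmetric monoidal structure extending $h$ is automatically a universal monoidal extension. The paper proves this by showing that the fibre over $h$ of the restriction map $D^{\times-}\downarrow 2\Fun(\Cor(\Fin),2\Cat)\rightarrow D\downarrow 2\Cat$ is contractible: because $h$ is universal it has no endomorphisms, so $\{h\}\subset D\downarrow 2\Cat$ is a full subcategory, hence the fibre is a full subcategory of $D^{\times-}\downarrow 2\Fun(\Cor(\Fin),2\Cat)$; Lemmas \ref{UNIV_MON_PROD} and \ref{BIV_UNIV_BOOT} then identify this full subcategory with the contractible space of $3$-universal symmetric monoidal extensions. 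Without an argument of this shape, your proposal establishes existence and universality of \emph{one} monoidal structure but not the asserted uniqueness; the same issue recurs for the second clause, which the paper handles by running the identical argument for the universal family over $\CMon(1\Cat^{+r})\times\Cor(\Fin)$.
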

\begin{proof}
First, 3-universal symmetric monoidal extensions exist by Theorem \ref{UNIV_EXT_THM}, Lemma \ref{UNIV_MON_PROD}, and Lemma \ref{BIV_UNIV_BOOT}. For the uniqueness of the monoidal structure, we must prove that the fibre of
\[ D^{\times-}\downarrow2\Fun^\times(\Cor(\Fin),2\Cat) \quad\rightarrow \quad D^{\times-}\downarrow2\Fun^\times(\Fin^\op,2\Cat) \quad\cong \quad D\downarrow2\Cat \]
over $h$ is contractible. Because $h:D\rightarrow\Cor_D$ is universal, it has no endomorphisms, i.e.~$\{h\}\subset D\downarrow2\Cat$ is a full subcategory. The fibre over $h$ is therefore also a full subcategory of $D^{\times-}\downarrow2\Fun(\Cor(\Fin),2\Cat)$. By Lemmas \ref{UNIV_MON_PROD} and \ref{BIV_UNIV_BOOT}, it is exactly the subcategory consisting of 3-universal symmetric monoidal extensions. 

The second statement follows in much the same way, but applied to the universal family over $\rm{CMon}(1\Cat^{+r})\times\Cor(\Fin)$.
\end{proof}

\begin{para}[Extension: oplax symmetric monoidal functors]
\label{UNIV_MON_LAXDEF}

An \emph{oplax} symmetric monoidal 2-functor between monoidal 2-categories $(C,\otimes)$ and $(D,\otimes)$ is defined, using the contravariant Grothendieck construction, to be a commutative triangle
\[\xymatrix{
\int_{\Cor(\Fin)}C^\otimes \ar[rr] \ar[dr] && \int_{\Cor(\Fin)}D^\otimes \ar[dl] \\
& \Cor(\Fin)^\op
}\]
in $2\Cat$ whose restriction to the subcategory $\Fin\subset\Cor(\Fin)^\op$ of inert morphisms is a 2-Cartesian transformation. (Presumably, this notion forms part of a theory of 2-co-operads into which I dare not delve further here.)

An oplax symmetric monoidal 2-functor $h:D\rightarrow K$ is an $n$-\emph{universal} oplax symmetric monoidal bivariant extension if restriction induces an $(n-1)$-equivalence
\[ 2\Fun^{\rm{oplax-}\otimes}(K,K') \tilde\rightarrow \Biv^{\rm{oplax-}\otimes}(D,K') \]
for any symmetric monoidal 2-category $K'$. Here, of course, $\Biv^{\rm{oplax-}\otimes}$ is the category of oplax symmetric monoidal functors whose underlying functor is bivariant.
\end{para}

\begin{thm}
\label{UNIV_MON_LAX}

The universal symmetric monoidal bivariant extension is also a universal oplax symmetric monoidal bivariant extension.
\end{thm}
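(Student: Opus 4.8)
The plan is to reduce the oplax statement to the fibered form of the universal property already developed in \S\ref{UNIV_CART}. Write $I=\Cor(\Fin)$ and present the symmetric monoidal structures on $D$ and $K'$ as the Cartesian fibrations $p\colon\int_I D^\otimes\to\Cor(\Fin)^\op$ and $\int_I K'^\otimes\to\Cor(\Fin)^\op$ obtained by contravariant Grothendieck integration; by Proposition \ref{UNIV_CART_MARK} the total space $\int_I D^\otimes$ carries a marking with base change, and by Proposition \ref{UNIV_CART_BIV} an $I$-indexed family of bivariant functors integrates to a bivariant functor over the base. Under this dictionary, unwinding Definition \ref{UNIV_MON_LAXDEF}, the category $\Biv^{\rm{oplax-}\otimes}(D,K')$ is exactly the full subcategory of the bivariant functors $\int_I D^\otimes\to\int_I K'^\otimes$ over $\Cor(\Fin)^\op$ spanned by those carrying $2$-Cartesian arrows over inert morphisms to $2$-Cartesian arrows; likewise $2\Fun^{\rm{oplax-}\otimes}(\Cor_D,K')$ is the full subcategory of fibered $2$-functors $\Cor(\int_I D^\otimes)\to\int_I K'^\otimes$ cut out by the same condition. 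Here I use Proposition \ref{UNIV_CART_GROT} to identify $\int_I\Cor_D^\otimes$ with the $2$-category of correspondences $\Cor(\int_I D^\otimes)$, which by Lemma \ref{UNIV_CART_CART} is $2$-Cartesian fibered over $\Cor(\Fin)^\op$ with $2$-Cartesian arrows precisely the images of the Cartesian arrows of $\int_I D^\otimes$.

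With this translation in hand, the core of the argument is the family version of the universal property. First I would invoke Proposition \ref{UNIV_CART_GROT}: the integrated extension $\int_I h\colon\int_I D^\otimes\to\Cor(\int_I D^\otimes)$ is a universal bivariant extension, so that restriction along it is an equivalence between the $2$-functors $\Cor(\int_I D^\otimes)\to\int_I K'^\otimes$ over $\Cor(\Fin)^\op$ and the bivariant functors $\int_I D^\otimes\to\int_I K'^\otimes$ over $\Cor(\Fin)^\op$, with inverse the span extension $g\mapsto\Spanext(g)$ of Theorem \ref{BIV_EXT_THM}. Note that the strong monoidal theorem \ref{UNIV_MON_THM} is the special case where one additionally demands $2$-Cartesian preservation over \emph{all} morphisms of $\Cor(\Fin)^\op$; the oplax version simply relaxes this to the inert morphisms. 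It then remains to check that the displayed equivalence carries the two oplax full subcategories onto one another.

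The verification is a matching of the Cartesian-preservation conditions, one implication each way. In the direction from $\Cor_D$ to $D$, if a fibered $2$-functor $G$ preserves $2$-Cartesian arrows over the inert morphisms, then so does its restriction $G\circ\int_I h$, since $\int_I h$ sends Cartesian arrows of $\int_I D^\otimes$ to $2$-Cartesian arrows of $\Cor(\int_I D^\otimes)$ over the same base morphism (Lemma \ref{UNIV_CART_CART}). Conversely, and this is the crucial input, if a fibered bivariant $g$ preserves $2$-Cartesian arrows over each inert morphism $\phi$, then the last clause of Proposition \ref{UNIV_CART_GROT} --- applied with $\phi$ ranging over the inert morphisms --- tells us that $\Spanext(g)$ again preserves $2$-Cartesian arrows over those $\phi$. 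Thus the equivalence restricts to the full subcategories of oplax objects, yielding $2\Fun^{\rm{oplax-}\otimes}(\Cor_D,K')\cong\Biv^{\rm{oplax-}\otimes}(D,K')$; since restricting $\int_I h$ to the fibre over $\langle 1\rangle$ recovers $h\colon D\to\Cor_D$, this exhibits $h$ as a universal oplax extension, and the higher ($3$-universal) form follows by the same powering-and-bootstrapping device as in Lemma \ref{BIV_UNIV_BOOT}, exactly as in the strong monoidal case \ref{UNIV_MON_THM}.

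I expect the main obstacle to be bookkeeping rather than conceptual: pinning down that the two oplax subcategories really are preimages of one another under the universal-property equivalence, which forces scrupulousness about the variance of the base $\Cor(\Fin)^\op$ and about which morphisms remain inert after transport through the Grothendieck construction. All the genuine work --- that span extension is compatible with the fibered structure and preserves Cartesian arrows over a fixed base morphism --- is already isolated in Proposition \ref{UNIV_CART_GROT}; consequently this theorem inherits its conditional $(\dag)$ status from the two-dimensional Grothendieck construction \ref{UNIV_GROT_HYP}, and nothing beyond \S\ref{UNIV_CART} is required.
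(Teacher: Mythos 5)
Your proposal is correct and follows the same route as the paper: the paper's proof is literally the one-line reduction ``this is a special case of Proposition \ref{UNIV_CART_GROT} with $I=\Cor(\Fin)$ and $D=D^\otimes$,'' and your argument is an unpacked version of exactly that reduction, using the same ingredients (Propositions \ref{UNIV_CART_MARK}, \ref{UNIV_CART_BIV}, Lemma \ref{UNIV_CART_CART}, and the final clause of Proposition \ref{UNIV_CART_GROT} applied to the inert morphisms). The only difference is that you spell out the dictionary between Definition \ref{UNIV_MON_LAXDEF} and fibered functors preserving $2$-Cartesian arrows over inerts, which the paper leaves implicit.
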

\begin{proof}
This is a special case of Proposition \ref{UNIV_CART_GROT} with $I=\Cor(\Fin)$ and $D=D^\otimes$.
\end{proof}

\begin{eg}[Spans operad]
\label{EX_MON_SPANS}

Suppose that $D$ is finitely complete. Then \cite[\S2.4.1]{HA} endows $D$ with a Cartesian symmetric monoidal structure $(D,\times)$. Because products commute with other limits, equipping it with the maximal marking makes it an object of $\rm{CMon}(1\Cat^{+r})$.

Applying Theorem \ref{UNIV_MON_THM}, we obtain a symmetric monoidal structure on $\Cor_D$ compatible with the inclusion $(D,\times)\rightarrow(\Cor_D,\otimes)$.  
Beware that the induced operation is \emph{not} usually the product in $\Cor_D$. Indeed, since $\Cor_D$ is equivalent to its opposite, coproducts are products, and in typical examples where these exist (like $\bf{Fin}$) it is given by the coproduct in the underlying category. See also \cite[\S2]{Toen_operad} for more details.

This monoidal structure is discussed in \cite{Haugseng}, where it is shown that all objects are actually \emph{self-dual} with evaluation and coevaluation each given by the correspondence 
\[\xymatrix{ 
& X\ar [dr]\ar[dl]_\delta \\ X\times X && \mathrm{pt} 
}\]
considered in the appropriate direction.
\end{eg}

\section{Examples}\label{EX}

Here we finally come to some non-trivial examples of bivariant functors to which the constructions of \S\S\ref{BIV_EXT}, \ref{UNIV_EXT}, and \ref{UNIV_MON} can be applied. The first example, \S\ref{EX_MON}, provides an alternative presentation of a basic construction internal to higher algebra and is therefore perhaps mainly of interest to specialists. The second set \S\ref{EX_COEF}, by contrast, come from geometry and cohomology theory. This fertile source of bivariant constructions provides the blueprint for further development of the theory presented in this paper.

The constructions of \S\ref{EX_MON} and the first half of \S\ref{EX_COEF} do not depend on the conditional results of \S\ref{UNIV}. The universal property comes into play when we \emph{classify} Cartesian monoidal structure (Prop.~\ref{EX_MON_CHAR}) and when we need span extension to work in families for the monoidal versions of categories of coefficients (\ref{EX_COEF_OPLAXCART} onwards).

\subsection{Cartesian and co-Cartesian symmetric monoidal structures}
 \label{EX_MON}

The notions of \emph{Cartesian} and \emph{co-Cartesian} symmetric monoidal structures \cite[\S2.4.1]{HA} are fundamental to symmetric monoidal $\infty$-category theory since, as far as I know, all other examples of symmetric monoidal structures are built out of these. 

The span extension theorem provides a quick and clean method for constructing  these structures, and generalises easily to classify Cartesian monoidal structures on $n$-categories. Combining it with the monoidal universal property, we obtain an alternative characterisation \eqref{EX_MON_CHAR} of when a symmetric monoidal structure is (co-)Cartesian.

\begin{para}
\label{EX_MON_CONSTRUCT}

Since $1\Cat$ admits finite products, it is equivalent (via right Kan extension) to the category $\Fun^\times(\Fin^\op,1\Cat)$ of product preserving functors into $1\Cat$ from $\Fin^\op$, the free Lawvere theory on one object.
This universal property of $\Fin^\op$ can be deduced, for example, from the fact that $\Fin\subset\Spc$ is generated under finite coproducts by $\mathrm{pt}$, and considering it as a $P^K_R(\mathrm{pt})$ in the notation of \cite[\S5.3.6]{HTT} with $R=\emptyset$, $K=\Fin$.
The construction associates to $C:1\Cat$ a functor $C^\times$ which takes:
\begin{itemize}
\item a finite set $I$ to the category $C^I$;
\item a map $\phi:I\rightarrow J$ to a 1-functor $\phi^{-1}:X^J\rightarrow X^I$ which on objects behaves as $(x_i)_{j:J} \return (x_{\phi i})_{i:I}$.
\end{itemize}
Now if $C$ admits finite products, resp.~coproducts, then for all $\phi$ the functor $\phi^{-1}$ admits a right, resp.~left adjoint given by
\begin{align*}
  \phi_*(x_i)_{i:I} & = \textstyle(\prod_{\phi i=j}x_i)_{j:J} \\
  \phi_!(x_i)_{i:I} & = \textstyle(\coprod_{\phi i=j}x_i)_{j:J}
\end{align*}
In other words, in this case the functor $C^\times:\Fin^\op\rightarrow1\Cat$ is right, resp.~left bivariant.

In either case, the pushforwards commute with pullbacks of finite sets, that is pushouts in $\Fin^\op$, so $C^\times$ has collar change. Re-orienting by taking opposites following the rules of \ref{BIV_FUN_OPP}, we deduce two right bivariant functors with base change --- in the finite products case, apply $\op_1\op_2$ to get
\begin{align*}
  C^\times_*: \Fin &\rightarrow 1\Cat^{\op_1\op_2} \\
  \Rightarrow\qquad (C,\times):\Cor(\Fin) & \underset{1\Cat}{\rightarrow} 1\Cat \mskip90mu
\end{align*}
where in the second line we composed with an anti-auto-equivalence $\Cor(\Fin)\cong\Cor(\Fin)^{\op_1}$ and suppressed the $\op_2$ because we restricted to 1-cores.  Similarly, in the finite coproducts case, applying $\op_1$:
\begin{align*}
  C^\times_!: \Fin &\rightarrow 1\Cat^{\op_1} \\
  \Rightarrow\qquad(C,\sqcup):\Cor(\Fin) & \underset{1\Cat}{\rightarrow} 1\Cat. \mskip90mu
\end{align*}
It is straightforward to check that these monoidal structures are indeed Cartesian, resp.~co-Cartesian. For example, in the Cartesian case the unit morphism is right adjoint to $C\rightarrow \rm{pt}$, so it is a final object as required. I omit the remaining details.
\end{para}

\begin{para}[Classification of (co-)Cartesian structures]

The results of \cite[\S2.4]{HA} also provide uniqueness statements for Cartesian and co-Cartesian symmetric monoidal structures, when they exist. Unfortunately, our results in \S\ref{UNIV_MON} do not immediately allow us to recover the same statements, because they only classify 2-functors out of $\Cor(\Fin)$, not 1-functors, which is what we actually want. For the moment, then, we do not have anything new to say about proving this statement.

Shifting perspective, however we can combine our results with those of \cite{HA} to obtain a novel characterisation of these structures:
\end{para}

\begin{prop}[Reformulation of (co-)Cartesian condition (\dag)]
\label{EX_MON_CHAR}

A symmetric monoidal structure on a category $C$ is Cartesian, resp.~co-Cartesian, if and only if it extends to a symmetric monoidal 2-functor of $\Cor(\Fin)^{\op_2}$, resp.~$\Cor(\Fin)$.
\end{prop}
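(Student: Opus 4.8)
The plan is to reduce both implications, through the monoidal universal property, to a statement about an adjunction that a $2$-functor must preserve, and then to match the direction of that adjunction against the convention built into $\Cor$. First I would apply the monoidal universal property (Theorem~\ref{UNIV_MON_THM}) to $D=\Fin$ with the maximal marking, which lies in $\CMon(1\Cat^{+r})$ by Example~\ref{EX_MON_SPANS}. This gives a natural equivalence $2\Fun^\otimes(\Cor(\Fin),1\Cat)\cong\Biv^\otimes(\Fin,1\Cat)$ and, after transport through the $\op_2$-intertwining of \ref{BIV_FUN_OPP}, a companion equivalence $2\Fun^\otimes(\Cor(\Fin)^{\op_2},1\Cat)\cong\Biv^{\mp r,\otimes}(\Fin,1\Cat)$ with the \emph{left} bivariant functors (here one notes that the objects of $2\Fun(\Cor(\Fin)^{\op_2},1\Cat)$ are the same as those of $2\Fun(\Cor(\Fin),1\Cat^{\op_2})$). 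Recall (\ref{UNIV_MON_DEF}) that a symmetric monoidal structure $(C,\otimes)$ is a product-preserving functor $M\colon\lie c_1\Cor(\Fin)\to 1\Cat$ with $M(\langle1\rangle)=C$, under which the codiagonal $\nabla\colon\langle2\rangle\to\langle1\rangle$ yields $\otimes=M(\nabla_!)$ and the diagonal yields $\Delta=M(\nabla^!)$. The key observation is that a symmetric monoidal $2$-functor extending $M$ restricts on $\lie c_1$ to $M$ and, being a $2$-functor, carries the adjunction $\nabla_!\dashv\nabla^!$ of $\Cor(\Fin)$ — respectively $\nabla^!\dashv\nabla_!$ of $\Cor(\Fin)^{\op_2}$ — to an adjunction in $1\Cat$. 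Such an extension therefore forces $\otimes\dashv\Delta$ in the $\Cor(\Fin)$ case and $\Delta\dashv\otimes$ in the $\Cor(\Fin)^{\op_2}$ case, exhibiting $\otimes$ as left adjoint, resp. right adjoint, to the diagonal; tracking the nullary map $\langle0\rangle\to\langle1\rangle$ similarly forces the unit to be initial, resp. terminal.

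For the forward implication I would invoke the construction \ref{EX_MON_CONSTRUCT} directly: when $C$ has finite products the power functor $C^\times\colon\Fin^\op\to1\Cat$ is right bivariant with base change, and its span extension is precisely a symmetric monoidal $2$-functor of $\Cor(\Fin)^{\op_2}$ — this is the object produced in \ref{EX_MON_CONSTRUCT} \emph{before} restricting to $1$-cores and suppressing the $\op_2$. Its restriction to $\lie c_1$ is the Cartesian structure by the verification recorded at the end of \ref{EX_MON_CONSTRUCT}. The co-Cartesian case is dual, using finite coproducts, left bivariance, and $\Cor(\Fin)$ itself.

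For the reverse implication, suppose $(C,\otimes)$ extends to a symmetric monoidal $2$-functor $\Phi$ of $\Cor(\Fin)^{\op_2}$. By the first paragraph $C$ then has a terminal object and, from $\nabla$, binary products; running this over all of $\Fin$ shows that $\Phi$ restricts to the power functor $C^\times$, now right bivariant with base change, its transfer maps $\Phi(\phi_!)$ being the products over fibres. Consequently $\Phi$ \emph{is} the span extension of $C^\times$, whence by the uniqueness of span extension (Theorem~\ref{UNIV_EXT_THM}; cf.~\ref{BIV_EXT_UNIQUE}) it coincides with the canonical Cartesian extension of \ref{EX_MON_CONSTRUCT}. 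Restricting to $1$-cores gives $(C,\otimes)=(C,\times)$, which is Cartesian (agreeing with the notion of \cite[\S2.4]{HA} by \ref{EX_MON_CONSTRUCT}); the co-Cartesian case is again dual.

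I expect the main obstacle to be the bookkeeping of parities: verifying that the universal property matches extensions to $\Cor(\Fin)^{\op_2}$ (and not $\Cor(\Fin)$) with the Cartesian case, which is exactly the fact that the $\Cor$-convention ``right-way maps are left adjoints'' is opposite to the product being a \emph{right} adjoint of the diagonal. A secondary delicate point is ensuring that the extension's restriction to $\lie c_1\Cor(\Fin)$ recovers the \emph{given} structure, so that both sides of the equivalence genuinely speak about the same $(C,\otimes)$; this is precisely what the uniqueness of span extension supplies, the only external input being \cite[\S2.4]{HA} to identify \ref{EX_MON_CONSTRUCT}'s $(C,\times)$ with the Cartesian structure in the usual sense.
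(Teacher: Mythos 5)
Your proposal is correct and takes essentially the same route as the paper's proof: the forward implication via the span extension of the power functor from \ref{EX_MON_CONSTRUCT}, and the reverse implication by showing the given extension must restrict to the bivariant power functor, identifying it with the canonical extension by uniqueness (Thm.~\ref{UNIV_MON_THM}), and invoking \cite[Cor.~2.4.1.9]{HA} to pin down the (co-)Cartesian structure. Your explicit tracking of the adjunctions $\nabla_!\dashv\nabla^!$ and their images merely unpacks what the paper compresses into the remark that restriction along the bivariant inclusion $\Fin\rightarrow\Cor(\Fin)$ is again bivariant, so the two arguments use the same ingredients in the same order.
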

\begin{proof}
We will argue for the co-Cartesian case. We have seen that $C^\times:\Fin^\op\rightarrow 1\Cat$ is right bivariant with base change if and only if $C$ admits finite coproducts. This holds if and only if it extends to a 2-functor $\Cor(\Fin)\rightarrow 1\Cat$; the `only if' clause is provided by span extension and `if' is because $\Fin\rightarrow\Cor(\Fin)$ is bivariant. Moreover, in this case the extension is unique (Thm.~\ref{UNIV_MON_THM}) and its restriction to $\lie c_1\Cor(\Fin)$ is a co-Cartesian monoidal structure.

On the other hand, \cite[Cor.~2.4.1.9]{HA} tells us that under exactly the same condition, $C$ admits a unique co-Cartesian monoidal structure. By uniqueness, our symmetric monoidal 2-functor extends this one.
\end{proof}

\begin{para}[Cartesian symmetric monoidal $n$-categories]
\label{EX_MON_nCAT}

The construction of \eqref{EX_MON_CONSTRUCT} extends without change to put `Cartesian' symmetric monoidal structures on $n$-categories with finite products (or coproducts), provided that these are interpreted suitably as $n$-limits, i.e. by the condition
\[  C(-,x\times y) \underset{(n-1)\Cat}{\cong} C(-,x)\times C(-,y). \]
The existence of $n$-categorical finite products is equivalent to the existence of $n$-categorical right adjoints to the reindexing maps $\phi^{-1}$ --- see aside \ref{CAT_ADJ_nCAT}. 
\end{para}

\subsection{Categories of coefficients}\label{EX_COEF}

Generally speaking, the phrase `categories of coefficient systems' refers to an assignment to a class of geometric objects $X$ of a stable, triangulated, or Abelian category $\sh A(X)$, with adjoint pushforward and pullback functors $f^*\dashv f_*$ associated to morphisms $f$, which satisfy a base change isomorphism. The base change isomorphism usually holds only under some restrictions on one or other of the maps defining the fibre product.

These data form part of a more sophisticated system popularly known as the \emph{six functor formalism}. It is discussed in the context of correspondences in \cite[Part III, Intro.]{GR}.

\begin{eg}[Locally compact Hausdorff spaces]
\label{EX_COEF_LCH}

Let $\bf{LCH}$ denote the category of locally compact Hausdorff spaces, marked with the class of proper morphisms, and $\sh A(X)$ one of the following:
\begin{itemize}
\item the Abelian category of Abelian sheaves on $X$;
\item derived $\infty$-category of cohomologically bounded below complexes of Abelian sheaves on $X$;
\item the category of sheaves of spaces on $X$;
\end{itemize}
considered as a contravariant functor of $\bf{LCH}$.

By the proper base change theorem \cite[Thm.\ 20.18.2]{stacksproject} (for Abelian sheaves), \cite[Thm.\ 7.3.1.16]{HTT} (for spaces), $\sh A$ is:
\begin{itemize}

\item right bivariant with base change, considered as a functor $\bf{LCH}\rightarrow 1\Cat^{\op_1}$;

\item left bivariant with collar change, considered as a functor $\bf{LCH}^\op \rightarrow 1\Cat$.

\end{itemize}
Following the first interpretation and applying $\op_1$, we obtain a 2-functor 
\begin{align*}
&&\Spanext(\sh A):\Cor(\bf{LCH},\,\rm{all},\,\rm{proper}) & \rightarrow 1\Cat^{\op_1}
\\
\Rightarrow && \Spanext(\sh A):\Cor(\bf{LCH},\,\rm{proper},\, \rm{all}) & \rightarrow 1\Cat  \mskip150mu
\end{align*}  
extending $\sh A$ (see \ref{BIV_LOC_NAME} for notation).
\end{eg}

\begin{eg}[\'Etale coefficient systems]
\label{EX_COEF_ET}

Let $\bf{Sch}_k$ be the category of quasi-compact, quasi-separated schemes over a field $k$ of characteristic $p$, marked with the set of proper morphisms, and $\sh A(X)$ the derived $\infty$-category of complexes of \'etale torsion sheaves on $X$.
Then just as in Ex.~\ref{EX_COEF_LCH}, by the proper base change theorem in \'etale cohomology \cite[Thm.~57.87.11]{stacksproject} $\sh A$ is right bivariant with base change if considered as a functor into $1\Cat^\op$.

Dually, mark $\bf{Sch}_k$ now with the class of \emph{smooth} morphisms and let $\sh A_{\hat p}\subseteq\sh A$ be the full subcategory of sheaves with torsion prime to $p$, considered as a covariant functor. Then the smooth base change theorem \cite[Thm.~57.85.2]{stacksproject} tells us that $\sh A_{\hat p}:\bf{Sch}_k\rightarrow 1\Cat$ is a right bivariant functor with base change, hence defines a $2$-functor
\[ \Spanext(\sh A_{\hat p}): \Cor(\bf{Sch}_k,\,\rm{all},\,\rm{smooth}) \rightarrow 1\Cat. \]
\end{eg}

\begin{eg}[Quasi-coherent cohomology]
\label{EX_COEF_QC}

Let $\bf{Sch}$ be the category of all qcqs schemes, $S_\bf{Sch}$ the set of flat morphisms, and $\rm{QC}(X)$ either the Abelian category of quasi-coherent $\sh O_X$-modules or the derived $\infty$-category of quasi-coherent complexes, considered as a covariant functor of $X$.
Then using flat base change in quasi-coherent cohomology \cite[Lemma 30.5.2]{stacksproject}, $\rm{QC}$ is left bivariant with base change. Hence, we obtain a 2-functor
\[ \Spanext(\rm{QC}):\Cor(\bf{Sch},\,\rm{all},\,\rm{flat})^{\op_2} \rightarrow 1\Cat. \]

A version for derived stacks with perfect morphisms \cite[Prop.\ 3.10]{IntegralTransforms} yields a 2-functor
\[ \Spanext(\rm{QC}): \Cor(\bf{Stk},~\rm{perfect},~\rm{all}) \longrightarrow 1\Cat \]
where $\bf{Stk}$ denotes the category of derived stacks. 
This admits a dual version using ind-coherent sheaves desribed in \cite[\S5]{GR}.
\end{eg}

\begin{para}[Upgrading the target 2-category]
\label{EX_COEF_UPGRADE}

Most of these examples naturally take values not in $1\Cat$, but in some upgraded version of the same --- Abelian categories, dg-categores, and their presentable or compactly generated analogues. For Theorem \ref{UNIV_EXT_THM} to give us 2-functors from correspondences into such an upgraded target, the pullback-pushforward adjunctions must be adjunctions in that 2-category.

If the target category $K$ is a subcategory of $1\Cat$ which is fully faithful on 1-cells, meaning that $K(x,y)$ is a full subcategory of $1\Cat(x,y)$ for all $x,y:K$, then any adjunction in $1\Cat$ between morphisms in $K$ lifts uniquely to an adjunction in $K$. The Beck-Chevalley condition is also inherited by the subcategory. This logic applies to the examples:
\begin{itemize}
\item Presentable categories and colimit-preserving functors;
\item Compactly generated categories and colimit and compact object preserving functors;
\item Abelian categories and additive, left exact, right exact, or exact functors;
\item Grothendieck Abelian categories and colimit and compact object preserving functors.
\end{itemize}

For the examples taking values in complexes, one expects an upgrade to a functor taking values in $\bf{dgCat}$. Since this is not a subcategory of $1\Cat$, but rather of the 2-category of categories enriched in complexes, the construction of the adjunction requires more attention. The Beck-Chevalley condition, however, can still be inferred from the image in $1\Cat$ by extensionality of natural transformations \eqref{CAT_TENS_EXT}.
\end{para}

\begin{eg}[Abelian sheaves, revisited]
\label{EX_COEF_ABREV}

Returning to Example \ref{EX_COEF_LCH}: the pullback and pushforward functors of Abelian sheaves are both left exact functors of Abelian categories, and so they yield adjunctions in the 2-category $\bf{AbCat}^\rm{lex}$ of Abelian categories and left exact functors. Hence, the spans extension upgrades to a 2-functor
\[ \Spanext(\sh A):\Cor(\bf{LCH},\,\rm{proper},\, \rm{all}) \longrightarrow \bf{AbCat}^\rm{lex} .\]
\end{eg}

\begin{para}[Oplax monoidal version, Cartesian case]
\label{EX_COEF_OPLAXCART}

All of the preceding examples can be made into oplax monoidal functors. The case where the target is $1\Cat$ is easiest to explain: by (the opposite of) \cite[Prop.~2.4.3.8]{HA}, for any 1-categories with finite products $C$, $D$ the forgetful map
\[ \Fun^{\rm{oplax}\otimes}((C,\times),(D,\times))\rightarrow 1\Fun(C,D)\]
is an equivalence, so the examples of bivariant functors in \ref{EX_COEF_LCH}--\ref{EX_COEF_QC} admit a unique such enhancement. 

Now, considering $1\Cat$ as a symmetric monoidal 2-category with the Cartesian structure discussed in \ref{EX_MON_nCAT} and applying Theorem \ref{UNIV_MON_LAX}, we obtain oplax symmetric monoidal functors
\begin{align*}
 \Spanext(\sh A_{\hat p}): (\Cor(\bf{Sch}_k,\,\rm{all},\,\rm{smooth}),\otimes) &\rightarrow (1\Cat,\times)
\\ 
\Spanext(\rm{QC}):(\Cor(\bf{Sch},\,\rm{all},\,\rm{flat}),\otimes)^{\op_2} &\rightarrow (1\Cat,\times) \end{align*} 
where $\otimes$ is the symmetric monoidal structure on correspondences induced by Cartesian product in the base category \eqref{EX_MON_SPANS}. Here we used that $\op_2$ preserves (op)lax monoidal functors.
(The case of \ref{EX_COEF_LCH} is a bit trickier since it starts with a contravariant functor, so effectively the monoidal structure is Cartesian on one side but co-Cartesian on the other.)
\end{para}

\begin{remark}
At least the case of quasi-coherent sheaves admits a more powerful \emph{strongly} monoidal version where we must be more careful about the target category. For example, \cite[Thm.\ 4.7]{IntegralTransforms} ought to provide a strongly monoidal 2-functor into a 2-category of presentable dg-categories with the presentable tensor product. I will defer further discussion of this idea to a later work.
\end{remark}

\printbibliography
\end{document}